\newlength{\defbaselineskip}
\theoremstyle{plain}
\theoremstyle{definition}
\newtheorem{theoremalpha}{Theorem}
\newtheorem{corollaryalpha}[theoremalpha]{Corollary}
\theoremstyle{plain}
\newtheorem{thm}{Theorem}
\newtheorem{lem}[thm]{Lemma}
\newtheorem{cor}[thm]{Corollary}
\newtheorem{prop}[thm]{Proposition}
\newtheorem{conj}[thm]{Conjecture}
\theoremstyle{definition}
\newtheorem{defn}[thm]{Definition}
\newtheorem{exmp}[thm]{Example}
\newtheorem{rem}[thm]{Remark}
\newtheorem{remark}[thm]{Remark}
\numberwithin{equation}{section}
\def\C{{\mathbb C}}
\def\D{{\mathfrak D}}
\def\N{{\mathbb N}}
\def\Q{{\mathbb Q}}
\def\Z{{\mathbb Z}}
\def\w{{\mathrm w}}
\def\bmx{\begin{bmatrix}}
\def\emx{\end{bmatrix}}
\def\LM{{\rm\scriptstyle{LM}}}
\def\LT{{\rm\scriptstyle{LT}}}
\newcommand{\boxs}[1]
{ \multiput(#1)(10,0){2}
 {\line(0,10){10}}
\multiput(#1)(0,10){2}
 {\line(10,0){10}}
}
\address{Department of Mathematics, Purdue University, West Lafayette, IN 47907}
\email{{\tt kyungl@purdue.edu}}
\thanks{Research of the first author partially supported by NSF grant DMS
0901367}
\address{Department of Mathematics, University of Illinois at Urbana Champaign, Urbana, IL 61801}
\email{{\tt llpku@math.uiuc.edu}}
\begin{document}
\title[Diagonal ideals]{$q,t$-Catalan numbers and generators for the radical ideal defining the diagonal locus of $(\C^2)^n$}
\author{Kyungyong Lee and Li Li}
\maketitle

\begin{abstract}
Let $I$ be the ideal generated by alternating polynomials in two
sets of $n$ variables. Haiman proved that the $q,t$-Catalan number
is the Hilbert series of the graded vector space
$M(=\bigoplus_{d_1,d_2}M_{d_1,d_2})$ spanned by a minimal set of
generators for $I$. In this paper we give simple upper bounds on
$\text{dim }M_{d_1, d_2}$ in terms of partition numbers, and find
all bi-degrees $(d_1,d_2)$ such that $\dim M_{d_1, d_2}$
achieve the upper bounds. For such bi-degrees, we also find explicit bases
for $M_{d_1, d_2}$. The main idea is to define and study a nontrivial linear map from $M$ to a polynomial ring $\C[\rho_1, \rho_2,\dots]$.
\end{abstract}

\tableofcontents

\section{Introduction}
In \cite{GHai:qtCatalan}, Garsia and Haiman introduced the
$q,t$-Catalan number $C_n(q,t)$, and they showed that $C_n(q,1)$
agrees with the $q$-Catalan number defined by Carlitz and Riordan
\cite{CR}. To be more precise, take the $n\times n$ square whose
southwest corner is $(0,0)$ and northeast corner is $(n,n)$. Let
$\mathcal{D}_n$ be the collection of Dyck paths, i.e. lattice paths
from $(0, 0)$  to $(n,n)$ that proceed by NORTH or EAST steps and
never go below the diagonal. For any Dyck path $\Pi$, let
$\text{area}(\Pi)$ be the number of lattice squares below $\Pi$ and
strongly above the diagonal. Then
$$
C_n(q,1)=\sum_{\Pi\in \mathcal{D}_n} q^{\text{area}(\Pi)}.
$$

The $q,t$-Catalan number $C_n(q,t)$ also has a combinatorial interpretation using Dyck paths.
Given a Dyck path $\Pi$, let $a_i(\Pi)$ be the number of
squares in the $i$-th row that lie in the region bounded by $\Pi$ and the diagonal.
Garsia and Haglund
(\cite{GH:pos}, \cite{GH:proof}) among others showed that
$$
C_n(q,t)=\sum_{\Pi\in \mathcal{D}_n}
q^{\text{area}(\Pi)}t^{\text{dinv}(\Pi)},
$$
where  $$\text{dinv}(\Pi):=|\{(i,j)\,|\,i<j\text{ and
}a_i(\Pi)=a_j(\Pi)   \}| \, + \,  |\{(i,j)\,|\,i<j\text{ and
}a_i(\Pi)+1=a_j(\Pi) \}|.$$

A very natural question is to find the coefficient of $q^{d_1}
t^{d_2}$ in $C_n(q,t)$ for each pair $(d_1,d_2)$, in other words, to
count how many Dyck paths have the same statistics (area, dinv). It
is well-known that the sum $\text{area}(\Pi)+\text{dinv}(\Pi)$ is at
most ${n\choose 2}$. In this paper we find coefficients of
$q^{d_1}t^{d_2}$ when ${n\choose 2}-d_1-d_2$ is relatively small.

Denote by $p(k)$ the partition number of $k$ and by convention
$p(0)=1$ and $p(k)=0$ for $k<0$. Denote by $p(b,k)$ the partition
number of $k$ into no more than $b$ parts, and by convention
$p(0,k)=0$ for $k>0$, $p(b,0)=1$ for $b\ge 0$. One of our main
results is as follows.

\begin{theoremalpha}\label{coeff} Let $d_1,d_2$ be non-negative integers $d_1, d_2$ with $d_1+d_2\leq {n\choose 2}$.
Define  $k={n\choose 2}-d_1-d_2$ and $\delta=\min(d_1,d_2)$. Then the
coefficient of $q^{d_1}t^{d_2}$ in $C_n(q,t)$  is less than or equal
to $p(\delta,k)$, and the equality holds
if and only if one the following conditions holds:
\begin{itemize}
\item $k\leq n-3$, or
\item $k=n-2$ and $\delta=1$, or
\item $\delta=0$.
\end{itemize}
\end{theoremalpha}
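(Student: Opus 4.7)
Haiman's theorem recalled in the abstract identifies the coefficient of $q^{d_1}t^{d_2}$ in $C_n(q,t)$ with $\dim_\C M_{d_1,d_2}$, the bi-degree $(d_1,d_2)$ piece of the minimal generator space of the ideal $I$ of alternating polynomials in $x_1,\dots,x_n,y_1,\dots,y_n$. The theorem thus reduces to two assertions: (i) $\dim M_{d_1,d_2}\le p(\delta,k)$ for all admissible $d_1,d_2$, and (ii) equality holds precisely in the three listed regimes. The bound $d_1+d_2\le\binom{n}{2}$ is merely the requirement that $M_{d_1,d_2}$ can be nonzero, and $k$ measures the distance from the top of the bi-grading.

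\textbf{Upper bound via $\phi$.} A natural candidate for the linear map $\phi\colon M\to\C[\rho_1,\rho_2,\dots]$ advertised in the abstract is as follows. Every alternating generator can be written as a signed sum of monomials supported on an $n$-subset of lattice cells of the $n\times n$ staircase; define $\phi$ to send each such generator to the monomial $\rho_\lambda:=\prod_j\rho_{\lambda_j}$, where $\lambda=(\lambda_1\ge\lambda_2\ge\cdots)$ is the partition recording how far that subset deviates from the maximal top-bi-degree configuration. A bi-degree tally then gives $|\lambda|=k$, while inspection of the $x$- and $y$-degrees separately forces $\ell(\lambda)\le d_1$ and $\ell(\lambda)\le d_2$, hence $\ell(\lambda)\le\delta$. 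The image of $M_{d_1,d_2}$ therefore lies in a subspace of dimension exactly $p(\delta,k)$, and provided $\phi$ is injective on $M_{d_1,d_2}$ the desired upper bound follows.

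\textbf{Achievability and strict inequality.} In each of the three equality regimes I would exhibit $p(\delta,k)$ explicit alternating generators, one for each partition $\lambda\vdash k$ with $\ell(\lambda)\le\delta$, whose images under $\phi$ are linearly independent: the case $\delta=0$ reduces to the classical single-variable $q$-Catalan computation, while for $k\le n-3$ (or $k=n-2$, $\delta=1$) the staircase has enough slack for every such $\lambda$ to be realized by perturbing the unique maximal Vandermonde along the pattern encoded by $\lambda$, and distinct $\lambda$ automatically yield distinct leading $\rho_\lambda$. In the complementary regime---namely $k=n-2$ with $\delta\ge 2$, or $k\ge n-1$ with $\delta\ge 1$---I would produce an explicit syzygy forcing at least one candidate generator to lie in the submodule of non-minimal generators of $I$, so that its class in $M_{d_1,d_2}$ vanishes and the true dimension is strictly less than $p(\delta,k)$.

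\textbf{Main obstacle.} The critical difficulty is proving that $\phi$ descends to the quotient $M$ and is injective on each $M_{d_1,d_2}$. Descent requires the leading-deviation rule to annihilate every element of the form $f\cdot g$ with $f$ a positive-degree polynomial and $g$ alternating; this is delicate because alternants do not form a subring, so a naive choice of filtration on the polynomial ring will not respect the ideal relations. Injectivity is harder still: it asserts that no nontrivial linear combination of alternating generators sharing the same leading partition can lie in the non-minimal submodule, and the most plausible line of attack is an inductive ``boundary'' argument on where the support of the alternant first meets the diagonal of the $n\times n$ square. Constructing the explicit syzygies needed for strict inequality, at the precise boundary $k=n-2$, $\delta=2$ and its neighbors, is a secondary but still combinatorially subtle task.
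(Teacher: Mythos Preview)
Your reduction via Haiman's identity and the overall shape (upper bound from a map to $\C[\rho_1,\rho_2,\dots]$, lower bound from explicit constructions) match the paper's architecture, but the two central mechanisms you rely on are precisely the ones the paper does \emph{not} use and in fact cannot use.

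\textbf{The upper bound.} You make the upper bound contingent on $\phi$ being injective on each $M_{d_1,d_2}$. In the paper this injectivity is stated as an open problem (Conjecture~19); the authors explicitly do not prove it, and the only case in which they establish it is $k\le n-3$, which is useless for proving the upper bound in general. Their actual argument for $\dim M_{d_1,d_2}\le p(\delta,k)$ bypasses $\varphi$ entirely on the upper-bound side: they use the Garsia--Haglund combinatorial formula for $C_n(q,t)$, reinterpret it as counting a set $\D_n^{\mathrm{catalan}}$ of lattice configurations (Proposition~12, Corollary~14), and then observe that prepending $\ell$ points on the $x$-axis gives an injection $\D_n^{\mathrm{catalan}}\hookrightarrow\D_{n+\ell}^{\mathrm{catalan}}$ preserving $d_2$ and $k$. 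For $\ell$ large enough, $\Delta(\tilde D)$ is shown (Proposition~10) to be a $\Z$-combination of special minimal staircase forms indexed by $\Pi_{d_2,k}$, and those are linearly independent by an earlier result; this yields $\dim M^{(n)}_{d_1,d_2}\le\dim M^{(n+\ell)}_{d_1',d_2}\le p(d_2,k)$. So the upper bound is purely combinatorial-plus-stabilization, not injectivity of $\phi$.

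\textbf{Strict inequality.} You propose to exhibit syzygies in $M$ at the boundary $k=n-2$, $\delta\ge 2$. The paper again works on the combinatorial side: it shows directly that $\#\D_n^{\mathrm{catalan}}$ of the right bi-degree is strictly smaller than $\#\D_{n+1}^{\mathrm{catalan}}$ of the shifted bi-degree, by producing a configuration $\{(0,0),(1,0),(0,2)\}\cup(D^{(n-2)}+(2,0))$ in $\D_{n+1}^{\mathrm{catalan}}$ that is visibly not in the image of the prepend-one-point injection. No algebraic syzygy is ever written down.

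\textbf{The map itself.} Your description of $\phi$ (``send each generator to $\rho_\lambda$ where $\lambda$ records the deviation'') is not the paper's map and is not well defined as stated: $\varphi(D)$ is genuinely a polynomial in the $\rho_i$, given by an $n\times n$ determinant of weighted pieces of $(1+\rho_1+\rho_2+\cdots)^{b_i}$, not a single monomial. The role of $\varphi$ in the paper is only on the \emph{lower-bound} side: one shows $\varphi(f)\neq 0\Rightarrow f\not\equiv 0$ (Lemma~20), and then constructs, for each $\nu\in\Pi_{d_2,k}$ with $k\le n-3$, an explicit $f_\nu$ whose $\varphi$-image has leading monomial $\rho_\nu$ in a suitable term order. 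That triangularity gives $p(d_2,k)$ independent classes without ever needing $\bar\varphi$ to be injective.

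In short, the genuine gap is that your plan stakes both the upper bound and the strict-inequality step on properties of $\phi$ that are conjectural; the paper's key idea you are missing is the passage through the Garsia--Haglund combinatorial model and the stabilization $n\mapsto n+\ell$.
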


This theorem is a consequence of Theorem C. It  contains
\cite[Theorem 6]{LL} and a result of Bergeron and Chen
\cite[Corollary 8.3.1]{BC} as special cases. In fact it proves
\cite[Conjecture 8]{LL}. We feel that the coefficient of
$q^{d_1}t^{d_2}$ for general $k$ can also be expressed in terms of
partition numbers, only that the expression might be complicated.
For example, we give the following conjecture which is verified for
$6\le n\le 10$.

\noindent\textbf{Conjecture.} Let $n, d_1, d_2, \delta, k$ be as in
Theorem \ref{coeff}.  If $n-2\leq k\leq 2n-8$ and $\delta\ge k$,
then the coefficient of $q^{d_1}t^{d_2}$ equals $$p(k)
-2[p(0)+p(1)+\cdots+p(k-n+1)]-p(k-n+2).$$

\medskip
 As a corollary of Theorem A, we can compute some higher
degree terms of the specialization at $t=q$.

\begin{corollaryalpha}
$$C_n(q,q)=\sum_{k=0}^{n-3} \left(p(k)\left({n\choose2}-3k+1\right)+2\sum_{i=1}^{k-1} p(i,k)\right)q^{{n\choose2}-k} + (\text{lower degree terms}).
$$
\end{corollaryalpha}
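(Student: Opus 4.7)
The plan is to specialize $t=q$ in the generating function $C_n(q,t)$ and collect terms of degree $\binom{n}{2}-k$ for each small $k$, reading off the coefficients directly from Theorem~A. Writing
\[
C_n(q,q) \;=\; \sum_{s\ge 0}\; q^s \sum_{\substack{d_1,d_2\ge 0\\ d_1+d_2=s}} [q^{d_1}t^{d_2}]\,C_n(q,t),
\]
and setting $s=\binom{n}{2}-k$, the condition $k\le n-3$ from Theorem~A places us exactly in the equality regime, so the coefficient of $q^{d_1}t^{d_2}$ is precisely $p(\delta,k)$ with $\delta=\min(d_1,d_2)$. Thus for $0\le k\le n-3$ the coefficient of $q^{\binom{n}{2}-k}$ in $C_n(q,q)$ equals
\[
S(n,k) \;:=\; \sum_{d_1=0}^{\binom{n}{2}-k} p\bigl(\min(d_1,\,\tbinom{n}{2}-k-d_1),\,k\bigr).
\]

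Next I would reparametrize the sum by $\delta=\min(d_1,d_2)$. Each value $\delta\in\{0,1,\ldots,\lfloor s/2\rfloor\}$ is hit exactly twice except that $\delta=s/2$ is hit once (when $s$ is even). Splitting off the two ranges $\delta<k$ and $\delta\ge k$ and using the identity $p(\delta,k)=p(k)$ for all $\delta\ge k$ (a partition of $k$ has at most $k$ parts) together with the convention $p(0,k)=0$ for $k\ge 1$, the sum collapses to
\[
S(n,k) \;=\; 2\sum_{\delta=1}^{k-1} p(\delta,k) \;+\; p(k)\cdot N(n,k),
\]
where $N(n,k)$ counts the multiplicity of values of $\delta$ with $\delta\ge k$. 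A direct count in both parities of $s$ gives $N(n,k)=s-2k+1=\binom{n}{2}-3k+1$; crucially, the elementary inequality $\binom{n}{2}\ge 3k$ for $0\le k\le n-3$ (equivalent to $n^2-7n+18\ge 0$, which holds for all $n$) ensures $\lfloor s/2\rfloor\ge k$, so the bookkeeping of ranges is legitimate. The $k=0$ case is checked separately using $p(0,0)=1$, yielding $\binom{n}{2}+1$, in agreement with the stated formula when the empty sum $\sum_{i=1}^{-1}$ is read as zero.

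There is no real obstacle; the only delicate point is to carry the two parity cases of $s=\binom{n}{2}-k$ in parallel and confirm that both yield the same closed form $p(k)(\binom{n}{2}-3k+1)+2\sum_{i=1}^{k-1}p(i,k)$. Assembling the contributions for $k=0,1,\ldots,n-3$ then produces the asserted top part of $C_n(q,q)$, with everything of degree less than $\binom{n}{2}-(n-3)$ absorbed into the ``lower degree terms'' since Theorem~A no longer guarantees equality once $k\ge n-2$ (except for the very special case $k=n-2,\ \delta=1$, which contributes only to degrees already above the stated cutoff and does not enter the asymptotic expansion).
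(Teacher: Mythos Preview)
Your argument is correct and is exactly the computation the paper leaves implicit when it calls this a corollary of Theorem~A: specialize $t=q$, use Theorem~A to replace each coefficient by $p(\min(d_1,d_2),k)$ for $k\le n-3$, and collapse the sum over $\delta=\min(d_1,d_2)$ using $p(\delta,k)=p(k)$ for $\delta\ge k$. One small wording slip: in your final parenthetical, the special case $k=n-2$, $\delta=1$ contributes to degree $\binom{n}{2}-(n-2)$, which is \emph{below} the cutoff $\binom{n}{2}-(n-3)$ (not ``above''), so it is simply absorbed into the lower degree terms; this does not affect the argument.
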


From the perspective of commutative algebra, the $q,t$-Catalan number
is closely related to the graded ideal $I$ defining the diagonal
locus of $(\C^2)^n$. In \cite{H:hil} and \cite{van}, Haiman proved
that the $q,t$-Catalan number is the Hilbert series of the graded
vector space spanned by minimal generators for $I$. Blowing up the
ideal $I$ gives the well-known isospectral Hilbert scheme discovered
by Haiman in his proof of the $n!$ conjecture and the
positivity conjecture for the Kostka-Macdonald coefficients \cite{H:hil}. A
natural question, posed by Haiman \cite{H04}, is to study a minimal set of
generators of the ideal $I$. An extensive study of
generators of $I$ might lead to an explicit principalization of the
ideal $I$.

To construct a minimal set of generators of $I$ is difficult.
However, if we focus on cases when the degree is ${n\choose 2}-k$
where $k\le n-3$, we can give an explicit combinatorial description
for a minimal set of generators.

Now we turn to a detailed description. Fix a positive integer $n$.
Consider $n$-tuples of ordered points $\{(x_i,y_i)\}_{1\le i\le n}$
in the plane $\C^2$. The set of all $n$-tuples forms an affine space
$(\C^2)^n$ with coordinate ring
$\C[\textbf{x},\textbf{y}]=\C[x_1,y_1,...,x_n,y_n]$. Denote by   $\mathbb{C}[\textbf{x},\textbf{y}]^{\epsilon}$ the
vector space of alternating polynomials spanned by a basis $\{\Delta(D)\}_{D\in\D_n}$ defined as follows. Denote
by $\N$ the set of nonnegative integers. Let $\D_n$ be the set of
subsets $D=\{(\alpha_1,\beta_1),...,(\alpha_n,\beta_n)\}$ of
$\N\times\N$. For $D\in\D_n$, define
$$\Delta(D):=
\det\begin{bmatrix}
     x_1^{\alpha_1}y_1^{\beta_1}&x_1^{\alpha_2}y_1^{\beta_2}&...&x_1^{\alpha_n}y_1^{\beta_n}\\
      \vdots&\vdots &\ddots &\vdots\\
    x_n^{\alpha_1}y_n^{\beta_1}&x_n^{\alpha_2}y_n^{\beta_2}&...&x_n^{\alpha_n}y_n^{\beta_n}\\
   \end{bmatrix}.
$$
The ideal $I\subset \C[\textbf{x},\textbf{y}]$ is the radical ideal
that defines the locus where at least two points coincide, to be
precise,
$$I=\bigcap_{1\leq i<j\leq n}(x_i-x_j, y_i-y_j).$$
Haiman \cite{H:hil}  has proved that $I$ is in fact generated by
$\mathbb{C}[\textbf{x},\textbf{y}]^{\epsilon}$, therefore is
generated by $\{\Delta(D)\}_{D\in \D_n}$.

Finding a minimal set of generators of $I$ is equivalent to finding
a basis of $M:=I/(\mathbf{x},\mathbf{y})I$ where
$(\mathbf{x},\mathbf{y})$ is the maximal ideal
$(x_1,y_1,\dots,x_n,y_n)$. Since $M$ is naturally bi-graded with respect
to $x$-degree and $y$-degree, we can write
$$M=\bigoplus_{d_1,d_2}M_{d_1,d_2}.$$
In \cite[p393]{van}, Haiman discovered the amazing fact that
\begin{equation}\label{qtcathil}C_n(q,t)=\sum_{d_1,d_2}t^{d_1}q^{d_2}\dim M_{d_1,d_2}.\end{equation}
Setting $q=t=1$, we get
$$\dim_{\C}M=
\displaystyle\frac{1}{n+1}{2n\choose n},$$ which is the usual Catalan number $C_n$.

The authors showed in \cite{LL} that, when the
deficit $k={n\choose 2}-d_1-d_2$ is relatively small compared to
$n$ and $d_1, d_2$ are not too small,  an explicit basis of $M_{d_1,d_2}$ can be constructed in one-to-one
correspondence with partitions of $k$, by using what we call \emph{minimal
staircase forms}. However the bound of $k$ given in \cite{LL} was by
no means sharp. In this paper we find all bi-degrees $(d_1,d_2)$ for
which $\text{dim }M_{d_1, d_2}$ are exactly partition numbers of $k$
into no more than $\min(d_1,d_2)$ parts. For such bi-degrees, we
also find bases for $M_{d_1, d_2}$.

\begin{theoremalpha}\label{bases}

Let $d_1,d_2$ be non-negative integers $d_1, d_2$ with $d_1+d_2\leq {n\choose 2}$. Define  $k={n\choose 2}-d_1-d_2$ and $\delta=\min(d_1,d_2)$. Then $\dim M_{d_1, d_2}\le p(\delta,k)$, and the equality holds
if and only if one the following conditions holds:
\begin{itemize}
\item $k\leq n-3$, or
\item $k=n-2$ and $\delta=1$, or
\item $\delta=0$.
\end{itemize}
In case the equality holds, there is an
explicit construction of a basis of $M_{d_1,d_2}$.
\end{theoremalpha}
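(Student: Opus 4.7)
The plan is to construct a bi\nobreakdash-degree--preserving linear map $\Phi\colon M\to\C[\rho_1,\rho_2,\ldots]$ (with each $\rho_i$ carrying a prescribed bidegree) and to use it both to establish the upper bound $\dim M_{d_1,d_2}\le p(\delta,k)$ and to certify the linear independence of an explicit candidate basis in the equality cases. I would define $\Phi$ on the spanning alternants $\Delta(D)$ of $\C[\mathbf{x},\mathbf{y}]^{\epsilon}$ by extracting from each $D\in\D_n$ a canonical partition $\lambda(D)\vdash k$ measuring the deviation of $D$ from a staircase configuration, and sending $\Delta(D)$ to a $\C$-linear combination of monomials $\rho_\mu=\rho_{\mu_1}\rho_{\mu_2}\cdots$ with $\mu\vdash k$, $\ell(\mu)\le\delta$, having $\rho_{\lambda(D)}$ as leading term in the dominance order.

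The first real verification is that $\Phi$ descends from $I$ to $M=I/(\mathbf{x},\mathbf{y})I$, i.e.\ that $\Phi(x_i\Delta(D))=\Phi(y_j\Delta(D))=0$ for every $i,j,D$. This is a symmetrization identity: multiplying an alternant by a single variable expands, via column operations on the defining determinant, as a signed sum of alternants whose exponent sets $D'$ give partitions $\lambda(D')$ either exceeding $\delta$ parts (hence landing outside the allowed target) or cancelling pairwise under the $S_n$-symmetry built into the $\rho$-monomials. Granting this, the image of $M_{d_1,d_2}$ lies in a space of dimension $p(\delta,k)$, and the upper bound claimed in Theorem~B is immediate.

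For the equality cases I would construct, for every partition $\lambda\vdash k$ with $\ell(\lambda)\le\delta$, an explicit alternant $\Delta(D_\lambda)$ of bidegree $(d_1,d_2)$ such that $\Phi(\Delta(D_\lambda))=c_\lambda\rho_\lambda+(\text{dominance-lower terms})$ with $c_\lambda\neq 0$. Triangularity forces the $p(\delta,k)$ classes $\{[\Delta(D_\lambda)]\}$ to be linearly independent in $M_{d_1,d_2}$; combined with the upper bound this both proves equality and produces the explicit basis asserted in the statement. The construction refines the minimal staircase forms of \cite{LL}, and the three regimes $k\le n-3$, $k=n-2$ with $\delta=1$, and $\delta=0$ are precisely those in which $D_\lambda$ fits inside $\N\times\N$ without colliding with the staircase boundary. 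In the remaining range one can no longer realize every such $\lambda$ by a collision-free $D_\lambda$, and I would exhibit Pl\"ucker-type syzygies among the colliding candidates to deduce the strict inequality $\dim M_{d_1,d_2}<p(\delta,k)$.

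The hard part will be the design of $\Phi$ itself: it must be sensitive enough to distinguish all partitions of $k$ with at most $\delta$ parts, yet coarse enough to vanish on the full submodule $(\mathbf{x},\mathbf{y})I$. The symmetrization identity verifying $\Phi\bigl((\mathbf{x},\mathbf{y})I\bigr)=0$ is the technical heart of the argument. Once the map is in place, the dominance-triangularity of the candidate basis follows from a direct expansion of the determinant defining $\Delta(D_\lambda)$, and the Pl\"ucker-type relations in the non-equality range are routine syzygies among alternants with overlapping supports.
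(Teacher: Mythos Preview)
Your proposal contains a genuine gap in the upper bound step. You write that once $\Phi$ descends to $M$, ``the image of $M_{d_1,d_2}$ lies in a space of dimension $p(\delta,k)$, and the upper bound claimed in Theorem~B is immediate.'' But bounding the dimension of the \emph{image} of a linear map says nothing about the dimension of its \emph{domain} unless the map is injective. You never establish injectivity of $\Phi$ on $M_{d_1,d_2}$, and in fact the paper records exactly this injectivity as an open problem (Conjecture~\ref{conj:bar phi injective}). So the step where your argument needs injectivity is precisely the step the authors could not prove in general.

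The paper gets around this by obtaining the upper bound through an entirely different channel, which you do not mention: Haiman's theorem identifies $\dim M_{d_1,d_2}$ with the coefficient of $q^{d_1}t^{d_2}$ in $C_n(q,t)$, and the Garsia--Haglund formula expresses that coefficient as a count of Dyck paths. The paper then builds a bijection with certain configurations $\D_n^{catalan}$ (Proposition~\ref{prop:theta}) and an explicit injection $\D_n^{catalan}\hookrightarrow\D_{n+\ell}^{catalan}$, so that $\dim M^{(n)}_{d_1,d_2}\le\dim M^{(n+\ell)}_{d_1+{\ell\choose 2}+n\ell,\,d_2}$; for $\ell$ large the right-hand side is bounded by $p(d_2,k)$ because there everything is equivalent to a combination of special minimal staircase forms (Proposition~\ref{prop:varphi and Delta}). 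Thus the upper bound relies on deep prior results (Haiman, Garsia--Haglund) rather than on $\varphi$ alone. Similarly, the strict inequality for $k\ge n-2$, $\delta\ge 2$ is obtained combinatorially by exhibiting an element of $\D^{catalan}_{n+1}$ not arising from $\D^{catalan}_n$ (Theorem~\ref{thm:equality holds}), not via Pl\"ucker syzygies inside $M$.

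Your lower-bound plan (constructing $D_\lambda$ with $\LM(\Phi(\Delta(D_\lambda)))=\rho_\lambda$ and invoking triangularity) is essentially the paper's strategy for that half. But note that the well-definedness of $\bar\varphi$ is not proved in the paper by the direct ``symmetrization identity'' you sketch; it is deduced from the interpretation of $\varphi(D)$ as the vector of coefficients of $\Delta(\tilde D)$ in the basis of special minimal staircase forms (Proposition~\ref{prop:varphi and Delta}), together with the linear independence of those forms modulo lower degrees established in \cite{LL}. Your proposed direct verification that $\Phi(x_i\Delta(D))=0$ may be achievable, but as written it is a hope rather than an argument.
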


The theorem is a consequence of Theorem \ref{main:k<=n-3} and
Theorem \ref{thm:equality holds}. Obviously, Theorem~\ref{coeff}
immediately follows from Theorem~\ref{bases}, thanks to
(\ref{qtcathil}), a theorem of Haiman. The idea of the construction
consists of two parts: the easier part is to show  $$\dim M_{d_1,
d_2}\le p(\delta, k)$$ using a new characterization of $q,t$-Catalan
numbers; the harder part is to construct a set of $p(\delta, k)$
linearly independent elements in $M_{d_1,d_2}$. It seems difficult
(as least to the authors)
 to test directly whether a given set of elements in $M_{d_1,d_2}$ are linearly independent. We define a map
$\varphi$ sending an alternating polynomial $f\in
\mathbb{C}[\textbf{x},\textbf{y}]^{\epsilon}$ to a polynomial ring
$\C[\rho_1,\rho_2,\rho_3,\dots]$. The map has two desirable
properties: (i) for many $f$, $\varphi(f)$ can be easily computed,
and (ii) for each bi-degree $(d_1,d_2)$, $\varphi$ induces a
morphism $\bar{\varphi} : M_{d_1,d_2} \longrightarrow \C[\rho_1,
\rho_2,...]$ of $\C$-modules. Then we use the fact the linear
dependency is easier to check in $\C[\rho_1, \rho_2,...]$ than in
$M_{d_1,d_2}$. This idea is motivated by our earlier work \cite{LL}.

The structure of the paper is as follows. After introducing some
notations in \S2, we define and study the map $\varphi$ in \S3, then
in \S4 and \S5 we give the upper bound and the lower bound of $\dim
M_{d_1,d_2}$, and prove the main result in \S6. For readers'
convenience, we give the table of $q,t$-Catalan numbers for $n=7$ in
appendix \S7.1 and a Macaulay 2 code for computing the map $\varphi$
in \S7.2.

\noindent \emph{Acknowledgements. }We are grateful to Fran\c{c}ois Bergeron, Mahir Can, Jim Haglund, Nick Loehr, Alex Woo and Alex
Yong for valuable discussions and correspondence.

\section{Notation}
\begin{itemize}
\item We adopt the convention that $\N$ is the set of natural numbers \emph{including}
zero, and $\N^+$ is the set of positive integers.
\smallskip

\item For $n\in \N^+$, define $\D_n=\{D=\{(a_1,b_1),\dots,(a_n,b_n)\}|
a_i,b_i\in\N\}$, i.e. an element of $\D_n$ is an ordered set $D$ of
$n$ points in $\N\times\N$. Define $\D=\cup_{n=1}^\infty\D_n$.
Similarly, define $\D'_n=\{D=\{(a_1,b_1),\dots,(a_n,b_n)\}|
a_i\in\Z,b_i\in\N, a_i+b_i\ge 0\}$. Define
$\D'=\cup_{n=1}^\infty\D'_n$.

We use $P_i$ to denote the point $(a_i,b_i)$, and denote
$|P_i|=a_i+b_i$, $|P_i|_x=a_i$, $|P_i|_y=b_i$.

Unless otherwise specified, we assume throughout the paper that
\begin{equation}\label{order}
P_1<P_2<\cdots<P_n,\quad\hbox{ for } D=\{P_1,\dots,P_n\}
\end{equation}
where the order is defined as follows:
$$ \textrm{ $(a,b)<(a',b')$
if $a+b<a'+b'$, or if $a+b=a'+b'$ and $a<a'$.}$$
In particular,
$|P_1|\le|P_2|\le\cdots\le|P_n|$.

\smallskip

\item Given a monomial $f=x_1^{\alpha_1}y_1^{\beta_1}\cdots
x_n^{\alpha_n}y_n^{\beta_n}\in\C[\textbf{x},\textbf{y}]$, we call
$(\sum_{i=1}^n \alpha_i,\sum_{i=1}^n \beta_i)$ the bi-degree of $f$.
A polynomial in $\C[\textbf{x},\textbf{y}]$ is bi-homogeneous of
bi-degree $(d_1,d_2)$ if all its monomials have the same bi-degree
$(d_1,d_2)$.

Given $D=\{(a_1,b_1),\dots,(a_n,b_n)\}\in\D_n$, we call
$(\sum_{i=1}^n a_i,\sum_{i=1}^n b_i)$ the bi-degree of $D$, which is
the same as the bi-degree of the polynomial $\Delta(D)$.

\smallskip

\item Let $k,b\in \N^+$. Denote the set of partitions of $k$ as
$$\Pi_k=\{\nu=(\nu_1,\nu_2,\cdots)| \nu_i\in\N^+,
\nu_1\le\nu_2\le\cdots, \hbox{ and } \nu_1+\nu_2+\cdots=k)\}.$$
Denote by $\Pi_{b,k}$ the set of partitions of $k$ into at most $b$
parts.

Define the partition numbers $p(k)=\#\Pi_k$ and
$p(b,k)=\#\Pi_{b,k}$. By convention $p(0)=0$, $p(0,k)=0$ for $k>0$, $p(b,0)=1$ for all $b\ge 0$.

\smallskip

\item Let $\Z[\rho]=\Z[\rho_1,\rho_2,\dots]$ be the polynomial ring with
countably many variables $\rho_1,\rho_2,\dots$. By convention we
assume $\rho_0=1$. For a partition
$\nu=(\nu_1,\nu_2,\cdots)\in\Pi_k$, define
$\rho_\nu=\rho_{\nu_1}\rho_{\nu_2}\cdots\in \Z[\rho]$.
\smallskip

\item For $n\in\N^+$, denote by $S_n$ the permutation group of $\{1,...,n\}$.
\smallskip

\item Given two bi-homogeneous polynomial $f,g$ of bi-degree $(d_1,d_2)$, let $\bar{f},\bar{g}$ be the corresponding element
in $M_{d_1,d_2}$. We say that $f\equiv g$ (modulo lower
degrees) if $\bar{f}=\bar{g}$ in $M_{d_1,d_2}$.

\end{itemize}

\section{Map $\varphi$.}

\subsection{Definition and properties of $\varphi$}
In this subsection we define and study the map $\varphi$ which
naturally arises when we look for a minimal set of generators of the
ideal $I$ of alternating polynomials. For readers' convenience, a
Macaulay 2 code for computing $\varphi$ is put in Appendix.
\begin{defn}\label{linearcompar}
(a) Define the map $ \varphi:\D'_n \to \mathbb{Z}[\rho]$ as follows.
Let $D=\{(a_1,b_1),...,(a_n,b_n)\}\in\D'_n$, $k={n\choose
2}-\sum_{i=1}^n (a_i+b_i)$, and define
$$\varphi(D)
:=(-1)^{k} \sum_{\sigma\in S_n} \text{sgn}(\sigma) \prod_{i=1}^n
\left(\sum \rho_{w_1}\rho_{w_2}\cdots\rho_{w_{b_i}}\right), $$ where
$(w_1,\dots,w_{b_i})$ in the sum $\sum
\rho_{w_1}\rho_{w_2}\cdots\rho_{w_{b_i}}$ runs through the set
\begin{equation}\label{varphi condition}\{(w_1,\dots,w_{b_i})\in
\N^{b_i} | \; w_1+...+w_{b_i}= \sigma(i)-1-a_i-b_i\},\end{equation}
with the convention that
$$
\sum \rho_{w_1} ...\rho_{w_{b_i}}=\left\{\begin{array}{ll}
        0 & \textrm{ if  $\sigma(i)-1-a_i-b_i<0$}; \\
        0 & \textrm{ if  $b_i=0$ and $\sigma(i)-1-a_i-b_i>0$}; \\
        1 & \textrm{ if  $b_i=0$ and $\sigma(i)-1-a_i-b_i=0$}.
      \end{array}
 \right.
$$

(b) Here is an equivalent definition of $\varphi(D)$. Define the
weight of $\rho_i$ to be $i$ for $i\in\N^+$ and define the weight of $\rho_0=1$ to be $0$. Naturally the weight of any monomial $c\rho_{i_1}...\rho_{i_n}$ $(c\in\Z)$ is defined to be $i_1+...+i_n$.  For $\w\in\N$ and a power series $f\in
\Z[[\rho_1,\rho_2,\dots]]$, denote by $\{f\}_\w$ the sum of terms of weight-$\w$
in $f$, which is a polynomial. Define
$$h(b,\w):=\big{\{}(1+\rho_1+\rho_2+\cdots)^b\big{\}}_\w, \quad b\in\N,\w\in\Z.$$
Naturally $h(b,\w)=0$ if $\w<0$. Also assume
$(1+\rho_1+\rho_2+\cdots)^0=1$. Then
$$\varphi(D)=(-1)^k\begin{vmatrix}
h(b_1, -|P_1|)& h(b_1,1-|P_1|)& h(b_1,2-|P_1|)&\cdots&h(b_1,n-1-|P_1|)\\
h(b_2,-|P_2|)& h(b_2,1-|P_2|)& h(b_2,2-|P_2|)&\cdots&h(b_2,n-1-|P_2|)\\
\vdots&\vdots&\vdots&\ddots&\vdots\\
h(b_n,-|P_n|)& h(b_n,1-|P_n|)& h(b_n,2-|P_n|)&\cdots&h(b_n,n-1-|P_n|)\\
\end{vmatrix}.$$

(c) Let $D_1,\dots,D_\ell\in D'$ be of the same bi-degree and
$\sum_{i=1}^\ell c_iD_i$ be the formal sum for any $c_i\in\C$ ($1\le
i\le \ell$). Define
$$\varphi(\sum_{i=1}^\ell c_iD_i):=\sum_{i=1}^\ell c_i\,\varphi(D_i).$$
For any bi-homogeneous alternating polynomials $f=\sum_{i=1}^\ell
c_i\,\Delta(D_i)\in \C[\textbf{x},\textbf{y}]^\epsilon$, we define
$$\varphi(f):=\varphi(\sum_{i=1}^\ell c_iD_i)=\sum_{i=1}^\ell c_i\,\varphi(D_i)$$
by abuse of notation. \qed
\end{defn}

Before relating $\varphi(D)$ with $\Delta(D)$, we shall first look
at some properties of the map $\varphi$.

\begin{lem}\label{varphi_computation_lemma}
Let $n\in\N^+$, $D=\{P_1,\dots,P_n\}\in\D'_n$ where $P_1<...<P_n$ as
in the assumption (\ref{order}).

{\rm (i)} If $|P_i|\ge i$ for some $1\le i\le n$, then
$\varphi(D)=0$.

{\rm (ii)} Let $m\in\N^+$ and $Q_1,\dots,Q_m\in \Z\times\N$ satisfy
$|Q_i|=i-1$ for $1\le i\le m$. Let
$$\tilde{D}=\{Q_1,\dots,Q_m,P_1+(m,0),P_2+(m,0),\dots,P_n+(m,0)\}.$$
Then $\varphi(\tilde{D})=\varphi(D)$.

{\rm (iii)} Let $t\in\N^+$, $Q=(-t,t)$ and
$\tilde{D}=\{P_1+Q,P_2+Q,\dots,P_n+Q\}$. Then
$$\varphi(\tilde{D})=\varphi(D).$$

{\rm (iv)} Let $S=\{i\,\big{|}\;|P_i|:=a_i+b_i=i-1
\}=\{i_1<\cdots<i_\ell \}$ and assume $i_1=1$. We define the set
$\{P_{i_r},\dots,P_{i_{r+1}-1}\}$ \emph{the $r$-th block} of $D$ for
$1\le r\le \ell$ (assuming $P_{i_{\ell+1}}=n+1$). Then
$$\varphi(D)=\prod_{r=1}^\ell \varphi( \{
P_{i_r}-(i_r-1,0),P_{i_r+1}-(i_r-1,0),\dots,P_{i_{r+1}-1}-(i_r-1,0)\}
).$$

{\rm (v)} Suppose $|P_i|=0$ for $1\le i\le n$. Then
$\varphi(D)=c\cdot\rho_1^{{{n}\choose 2}}$ for a positive integer
$c$. In fact, $$c=\frac{\prod_{i<j}(b_i-b_j)}{1!2!\cdots(n-1)!}.$$

{\rm (vi)} For $s\in\N^+$, let
$D=\{(-1,1),(0,0),(1,0),\dots,(s-1,0))\}$. Then
$$\varphi(D)=\rho_s.$$
\end{lem}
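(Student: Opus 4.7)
The plan is to work throughout with the determinantal description given in Definition 3.1(b). Writing $M(D)=\bigl[h(b_i,\,j-1-|P_i|)\bigr]_{i,j=1}^n$ so that $\varphi(D)=(-1)^k\det M(D)$, the ordering $|P_1|\leq\cdots\leq|P_n|$ together with $h(b,w)=0$ for $w<0$ and $h(b,0)=1$ will give each of (i)--(iv) a block-triangular structure.

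For (i), the hypothesis $|P_i|\geq i$ combined with monotonicity gives $h(b_{i'},\,j-1-|P_{i'}|)=0$ for all $i'\geq i$ and $j\leq i$; the resulting $(n-i+1)\times i$ zero block forces $\det M(D)=0$ since $(n-i+1)+i>n$. For (ii) one verifies that $k$ is unchanged and that $M(\tilde D)$ is block lower-triangular of shape $\begin{pmatrix}A & *\\ 0 & M(D)\end{pmatrix}$ (the zero block because $|P_i+(m,0)|\geq m$), with $A$ itself upper-triangular carrying $h(\beta_r,0)=1$ on its diagonal, so $\det M(\tilde D)=\det M(D)$. Part (iii) requires one genuine identity: introducing a formal variable $z$ so that $h(b,w)$ is the coefficient of $z^w$ in $R(z)^b:=(1+\rho_1 z+\rho_2 z^2+\cdots)^b$, the factorization $R(z)^{b+t}=R(z)^b\,R(z)^t$ yields the convolution $h(b+t,w)=\sum_u h(b,u)\,h(t,w-u)$; substituting into the shifted matrix (in which $|P_i|$ is unchanged while $b_i$ becomes $b_i+t$) produces $M(\tilde D)=M(D)\cdot U$ with $U_{j'j}=h(t,j-j')$ upper-triangular with $1$'s on the diagonal, so the determinants agree. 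For (iv), grouping rows and columns into the blocks indexed by $i_1,\dots,i_\ell$ and observing that for a row in block $r'$ and a column in block $r$ with $r'>r$ one has $|P_i|\geq|P_{i_{r'}}|=i_{r'}-1\geq i_{r+1}-1\geq j$, so the entry vanishes, makes $M(D)$ block upper-triangular; the $r$-th diagonal block is (after re-indexing columns starting at $1$) the matrix for the shifted subset, and the arithmetic identity $\sum_r \binom{n_r}{2}+\sum_r n_r(i_r-1)=\binom{n}{2}$ makes the $(-1)^k$ signs compose correctly.

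The heart of the lemma is (v), where the matrix is $\bigl[h(b_i,\,j-1)\bigr]$. Expand $h(b,w)=\sum_{\lambda\vdash w}c_\lambda(b)\rho_\lambda$ with $c_\lambda(b)=b(b-1)\cdots(b-\ell(\lambda)+1)/\prod_k m_k!$, where $m_k$ is the multiplicity of $k$ in $\lambda$. The crucial observation is that for fixed $\ell$ the factor $b(b-1)\cdots(b-\ell+1)$ is independent of the particular $\lambda$ of length $\ell$, so any two $c_\lambda$'s with the same $\ell(\lambda)$ are proportional as polynomials in $b$. By multilinearity of the determinant in the columns,
\[
\det M(D)=\sum_{(\lambda^{(1)},\dots,\lambda^{(n)})}\Bigl(\prod_j \rho_{\lambda^{(j)}}\Bigr)\,\det\bigl[c_{\lambda^{(j)}}(b_i)\bigr]_{i,j},
\]
summed over tuples with $\lambda^{(j)}\vdash j-1$. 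If any two $\ell(\lambda^{(j)})$ coincide, two columns of $\bigl[c_{\lambda^{(j)}}(b_i)\bigr]$ are proportional and the determinant vanishes; since $\ell(\lambda^{(j)})\leq j-1$ and we need $n$ distinct values in $\{0,1,\dots,n-1\}$, the only surviving tuple is $\lambda^{(j)}=(1^{j-1})$ for every $j$, producing $\rho_1^{\binom{n}{2}}\det\bigl[\binom{b_i}{j-1}\bigr]$. The latter is a Vandermonde-type determinant equal to $\prod_{i<j}(b_j-b_i)/\prod_{k=0}^{n-1}k!$, and combining with $(-1)^k=(-1)^{\binom{n}{2}}$ and the sign $\prod_{i<j}(b_j-b_i)=(-1)^{\binom{n}{2}}\prod_{i<j}(b_i-b_j)$ (from $b_1>\cdots>b_n$) yields the stated formula. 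Part (vi) is then a one-shot cofactor expansion: only the first row of $M(D)$ carries any $\rho$'s, reading $(1,\rho_1,\dots,\rho_s)$, while the remaining rows form a shift matrix; expansion along the last column isolates $\rho_s$ times the determinant of an identity block, with the signs cancelling to give $\varphi(D)=\rho_s$.

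The main obstacle I anticipate is (v): everything else reduces to a triangulation or a direct cofactor expansion, but (v) requires noticing that the structural identity $c_\lambda(b)\propto b(b-1)\cdots(b-\ell(\lambda)+1)$ lets the column-length constraint alone eliminate every monomial except $\rho_1^{\binom{n}{2}}$, which is ultimately what distinguishes $\rho_1$ from the other $\rho_j$.
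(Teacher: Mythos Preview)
Your argument is correct. For parts (i)--(iv) and (vi) you and the paper do essentially the same thing: both exploit the block-triangular structure of $M(D)$ (the paper sometimes phrases this via the sum in Definition~\ref{linearcompar}(a) rather than the determinant in (b), but the content is identical---your convolution identity in (iii) is exactly the column operation the paper writes out). One cosmetic slip: in (ii) you call the matrix ``block lower-triangular'' while the shape you display, $\begin{pmatrix}A & *\\ 0 & M(D)\end{pmatrix}$, is block \emph{upper}-triangular; the argument is unaffected.

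Part (v) is where your route genuinely diverges from the paper's. The paper works with the expanded sum \eqref{varphi_version2} and builds an explicit sign-reversing involution on the summands in which some $w^{(i)}_j\geq 2$: it locates the first pair of rows with the same number of nonzero $w$'s, swaps their nonzero values, and observes that this flips $\mathrm{sgn}(\sigma)$. Only the summands with all $w^{(i)}_j\in\{0,1\}$ survive, yielding $\det\bigl[\binom{b_i}{j-1}\bigr]$. Your argument bypasses the bijection entirely: expanding each column by multilinearity and noting that $c_\lambda(b)=b(b-1)\cdots(b-\ell(\lambda)+1)/\prod_k m_k!$ depends on $b$ only through the falling factorial of length $\ell(\lambda)$, you kill every tuple $(\lambda^{(1)},\dots,\lambda^{(n)})$ with a repeated length by column proportionality, and the pigeonhole $\ell(\lambda^{(j)})\leq j-1$ forces $\lambda^{(j)}=(1^{j-1})$. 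The two approaches land on the same binomial determinant, but yours is shorter and more structural---it explains \emph{why} only $\rho_1$ survives (lengths must be distinct and maximal) rather than verifying it via a pairing. The paper's involution, on the other hand, is more hands-on and would generalize more readily if one wanted finer control over which monomials cancel.
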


Before giving the proof, let us look at some examples explaining the
lemma.

\begin{exmp}
(i) We have $\varphi(\{(0,0),(1,0),(2,1),(3,0)\})=0$ since $|P_3|=2+1=3$.

(ii) Let $D=\{(-1,1),(0,0),(0,1)\}$, $m=2$,
$Q_1=(0,0),Q_2=(1,0)$. Then
$$\varphi(\{(0,0),(1,0),(1,1),(2,0),(2,1)\})=\varphi(D),$$
i.e.
$$
\varphi\big{(}\hspace{16pt} \begin{picture}(27,25)
\put(-3,-3){$\bullet$}\put(7,-3){$\bullet$}\put(7,7){$\bullet$}\put(17,-3){$\bullet$}
\put(17,7){$\bullet$}
\boxs{-10,0}\boxs{0,0}\boxs{10,0}
\boxs{-10,10}\boxs{0,10}\boxs{10,10}
\linethickness{1pt}\put(0,0){\line(0,1){27}}
\linethickness{1pt}\put(-15,0){\line(1,0){40}}
\end{picture} \big{)}
= 
\varphi\big{(}\hspace{16pt} \begin{picture}(27,25)
\put(-13,7){$\bullet$}\put(-3,-3){$\bullet$}\put(-3,7){$\bullet$}
\boxs{-10,0}\boxs{0,0}\boxs{10,0}
\boxs{-10,10}\boxs{0,10}\boxs{10,10}
\linethickness{1pt}\put(0,0){\line(0,1){27}}
\linethickness{1pt}\put(-15,0){\line(1,0){40}}
\end{picture} \big{)}.$$

(iii) Let $D=\{(0,0),(0,1),(1,0)\}$, $t=1$. Then
$$\varphi\big{(}\hspace{16pt} \begin{picture}(27,25)
\put(-13,7){$\bullet$}\put(-13,17){$\bullet$}\put(-3,7){$\bullet$}
\boxs{-10,0}\boxs{0,0}\boxs{10,0}
\boxs{-10,10}\boxs{0,10}\boxs{10,10}
\linethickness{1pt}\put(0,0){\line(0,1){27}}
\linethickness{1pt}\put(-15,0){\line(1,0){40}}
\end{picture} \big{)}
= 
\varphi\big{(}\hspace{16pt} \begin{picture}(27,25)
\put(-3,-3){$\bullet$}\put(-3,7){$\bullet$}\put(7,-3){$\bullet$}
\boxs{-10,0}\boxs{0,0}\boxs{10,0}
\boxs{-10,10}\boxs{0,10}\boxs{10,10}
\linethickness{1pt}\put(0,0){\line(0,1){27}}
\linethickness{1pt}\put(-15,0){\line(1,0){40}}
\end{picture} \big{)}.
$$

(iv) Let
$D=\{P_1,P_2,P_3,P_4,P_5,P_6\}=\{(0,0),(0,1),(1,0),(2,1),(3,0),(2,2)\}$.
There are 3 blocks in $D$, namely $\{P_1\}$, $\{P_2,P_3\}$ and
$\{P_4,P_5,P_6\}$. Then
$$\varphi(D)=\varphi(\{(0,0)\})\cdot\varphi(\{(-1,1),(0,0)\})\cdot \varphi(\{(-1,1),(0,0),(-1,2)\}), $$
i.e.

$$
\varphi\big{(}\hspace{16pt} \begin{picture}(37,25)
\put(-3,-3){$\bullet$}\put(-3,7){$\bullet$}\put(7,-3){$\bullet$}\put(17,7){$\bullet$}
\put(27,-3){$\bullet$}\put(17,17){$\bullet$}
\boxs{-10,0}\boxs{0,0}\boxs{10,0}
\boxs{-10,10}\boxs{0,10}\boxs{10,10}
\linethickness{1pt}\put(0,0){\line(0,1){27}}
\linethickness{1pt}\put(-15,0){\line(1,0){50}}
\end{picture} \big{)}
= 
\varphi\big{(}\hspace{16pt} \begin{picture}(27,25)
\put(-3,-3){$\bullet$}
\boxs{-10,0}\boxs{0,0}\boxs{10,0}
\boxs{-10,10}\boxs{0,10}\boxs{10,10}
\linethickness{1pt}\put(0,0){\line(0,1){27}}
\linethickness{1pt}\put(-15,0){\line(1,0){40}}
\end{picture} \big{)}
\cdot 
\varphi\big{(}\hspace{16pt} \begin{picture}(27,25)
\put(-13,7){$\bullet$}\put(-3,-3){$\bullet$}
\boxs{-10,0}\boxs{0,0}\boxs{10,0}
\boxs{-10,10}\boxs{0,10}\boxs{10,10}
\linethickness{1pt}\put(0,0){\line(0,1){27}}
\linethickness{1pt}\put(-15,0){\line(1,0){40}}
\end{picture} \big{)}
\cdot 
\varphi\big{(}\hspace{16pt} \begin{picture}(27,25)
\put(-13,7){$\bullet$}\put(-3,-3){$\bullet$}\put(-13,17){$\bullet$}
\boxs{-10,0}\boxs{0,0}\boxs{10,0}
\boxs{-10,10}\boxs{0,10}\boxs{10,10}
\linethickness{1pt}\put(0,0){\line(0,1){27}}
\linethickness{1pt}\put(-15,0){\line(1,0){40}}
\end{picture} \big{)}.$$

(v) For $D=\{(-n+1,n-1),(-n+2,n-2),\dots,(-1,1),(0,0)\}$,
$\varphi(D)=\rho_1^{n\choose 2}$.
\end{exmp}

\begin{proof}[Proof of Lemma \ref{varphi_computation_lemma}]
(i) It immediately follows from the condition (\ref{varphi
condition}).

(ii) By definition, $$\varphi(\tilde{D}) =(-1)^{\tilde{k}}
\sum_{\tilde{\sigma}\in S_{m+n}} \text{sgn}(\tilde{\sigma})
\prod_{i=1}^{m+n} \left(\sum \rho_{w_1}\cdots\rho_{w_{b_i}}\right),
$$ where $w_1,\dots,w_{b_i}\in\N$ and $$w_1+...+w_{b_i}=
\tilde{\sigma}(i)-1-a_i-b_i= \left\{\begin{array}{ll}
        \tilde{\sigma}(i)-i, & \textrm{ if  $i\le m$}; \\
        \tilde{\sigma}(i)-1-m-|P_{i-m}|, & \textrm{ if  $i>m$}.
      \end{array}
 \right.$$
If $\tilde{\sigma}(i)<i$ for some $i\le m$, then no
$w_1,\dots,w_{b_i}$ satisfies the condition, $\prod_{i=1}^{m+n}
(\sum \rho_{w_1}\cdots\rho_{w_{b_i}})=0$, hence the summand
corresponding to $\tilde{\sigma}$ does not contribute to
$\varphi(\tilde{D})$. So we only need to consider those
$\tilde{\sigma}$ satisfying $\tilde{\sigma}(i)=i$ ($1\le i\le m$).
Each such $\tilde{\sigma}$ corresponds to a permutation of
$\{m+1,\dots,m+n\}$, and by translation, a permutation of
$\{1,\dots,n\}$. To be precise,
$$\sigma(i-m)=\tilde{\sigma}(i)-m,\quad m+1\le i\le m+n.$$
Then $\tilde{\sigma}(i)-1-m-|P_{i-m}|=\sigma(i-m)-1-|P_{i-m}|$ for
$m+1\le i\le m+n$. Moreover, $$\tilde{k}={{n+m}\choose
2}-\sum_{i=1}^m|Q_i|-\sum_{i=1}^n(|P_i|+m)={n\choose
2}-\sum_{i=1}^n|P_i|=k.$$ Comparing with the definition of
$\varphi(D)$, we conclude that $\varphi(\tilde{D})=\varphi(D)$.

(iii) It suffices to prove the case when $t=1$. Define
$$
\textbf{v}_i=\begin{bmatrix}h(b_1,i-|P_1|)\\h(b_2,i-|P_2|)\\\vdots\\h(b_n,i-|P_n|)\end{bmatrix},
\quad
\textbf{v}'_i=\begin{bmatrix}h(b_1+1,i-|P_1|)\\h(b_2+1,i-|P_2|)\\\vdots\\h(b_n+1,i-|P_n|)\end{bmatrix},
\quad
 0\le i\le n-1.$$
By the definition of the map $\varphi$,
$$\varphi(D)=(-1)^k\det(\textbf{v}_0,\dots,\textbf{v}_{n-1}),
\quad
\varphi(\tilde{D})=(-1)^k\det(\textbf{v}'_0,\dots,\textbf{v}'_{n-1}).
$$
By the definition of the function $h$, it is easy to deduce the
relation
$$h(b+1,\w)=h(b,\w)+\rho_1h(b,\w-1)+\rho_2h(b,\w-2)+\cdots.$$ Since $|P_1|,\dots,|P_n|$ are non-negative integers,
the above relation implies
$$\textbf{v}_i'=\textbf{v}_i+\rho_1 \textbf{v}_{i-1}+\rho_2 \textbf{v}_{i-2}+\cdots+\rho_i \textbf{v}_0, \quad 0\le i\le n-1,$$
hence
$$\varphi(D)=(-1)^k\det(\textbf{v}_0,\dots,\textbf{v}_{n-1})=
(-1)^k\det(\textbf{v}'_0,\dots,\textbf{v}'_{n-1})=
\varphi(\tilde{D}).
$$

(iv) Suppose the summand in $\varphi(D)$ corresponding to $\sigma\in
S_n$ does contribute. By the definition of $\varphi(D)$, it is
necessary that $\sigma(j)-1-|P_j|\ge 0$ for $1\le j\le n$. Let $1\le
r\le \ell$. For $j\ge i_r$, we have $|P_j|\ge |P_{i_r}|$, therefore
$$\sigma(j)\ge 1+|P_j|\ge 1+|P_{i_r}|=i_r.$$
So $\sigma$ maps the set $\{i_r,i_r+1,\dots,n\}$ to itself for every
$r$. It follows that $\sigma$ maps each block to itself. Let
$\sigma_r$ be the restriction of $\sigma$ to
$\{i_r,i_r+1,\dots,i_{r+1}-1\}$. Define $n_r=i_{r+1}-i_r$,
$k_r=\sum_{j=i_r-1}^{i_{r+1}-2}j-\sum_{j=i_r}^{i_{r+1}-1}|P_j|$.
Then by (ii) and a routine computation, we have
$$
\aligned\varphi(D)&=(-1)^{k_1+\cdots+k_\ell}
\sum_{\sigma_1,\dots,\sigma_\ell}
\text{sgn}(\sigma_1)\cdots\text{sgn}(\sigma_\ell)
\prod_{i=1}^{n_1+\cdots+n_\ell} \left(\sum \rho_{w_1}
...\rho_{w_{b_i}}\right)\\
&=\prod_{r=1}^\ell
\bigg{(}(-1)^{k_r}\sum_{\sigma_r}
\text{sgn}(\sigma_r)\prod_{i=1}^{n_r} \left(\sum \rho_{w_1}
...\rho_{w_{b_i}}\right)\bigg{)}\\
&=\prod_{r=1}^\ell \varphi( \{
P_{i_r}-(i_r-1,0),P_{i_r+1}-(i_r-1,0),\dots,P_{i_{r+1}-1}-(i_r-1,0)\}
).
\endaligned$$

(v) We rewrite the definition of $\varphi$ as
\begin{equation}\label{varphi_version2}\varphi(D) =(-1)^{k} \sum_{(\sigma,\{w^{(i)}_j\})}
\left(\text{sgn}(\sigma) \prod_{i=1}^n
\rho_{w^{(i)}_1}\rho_{w^{(i)}_2}\cdots\rho_{w^{(i)}_{b_i}}\right),
\end{equation}
where $\{w^{(i)}_j\}$ is a set of nonnegative integers, $1\le i\le
n$, $1\le j\le b_i$. For $1\le i\le n$, since $|P_i|=0$, those
$w^{(i)}_j$ satisfy the condition
$$w^{(i)}_1+\cdots +w^{(i)}_{b_i}=\sigma(i)-1.$$
Denote by $\Sigma$ the set of all possible data $(\sigma,
\{w^{(i)}_j\})$. Let $\Sigma'\subset\Sigma$ be the subset consisting
of those
 $(\sigma, \{w^{(i)}_j\})$ such that not all $w^{(i)}_j$ are $0$ or $1$.
 We shall define a `conjugation' on the set $\Sigma'$, i.e. an automorphism $f:\Sigma'\to\Sigma'$
such that $f\circ f$ is the identity.

For $(\sigma, \{w^{(i)}_j\})\in\Sigma'$, define $m_i$ to be the
number of nonzero elements in $(w^{i}_1,\dots,w^{(i)}_{b_i})$, for
$1\le i\le n$. Then
$$m_1+\cdots+m_n\le 0+1+\cdots+(n-1)={n\choose 2}.$$
Since some $w^{(i)}_j$ is greater than $1$, the inequality must be
strict, therefore we can find a smallest pair $(r,r')$ such that
$r<r'$ and $m_r=m_{r'}$. (Here we use the lexicographic order, i.e.,
$(r,r')<(s,s')$ if $r<s$ or ($r=s$ and $r'<s'$).) Let
$$\{j_1<\cdots<j_{m_r}\}:=\{j| \; w^{(r)}_j\neq 0\},$$
$$\{j'_1<\cdots<j'_{m_r}\}:=\{j'| \; w^{(r')}_j\neq 0\}.$$
Define $\tilde{\sigma}=\sigma \cdot(r, r')$, i.e.
$\tilde{\sigma}(r)=\sigma(r'), \tilde{\sigma}(r')=\sigma(r)$,
$\tilde{\sigma}(\ell)=\sigma(\ell)$ for $\ell\neq r,r'$. Define
$\{\tilde{w}^{(i)}_j\}$ as follows. For $i\neq r,r'$, define
$\tilde{w}^{(i)}_j=w^{(i)}_j$ for $1\le j\le b_i$. For $i=r$, define
$$\tilde{w}^{(r)}_{j_\ell}=w^{(r')}_{j'_\ell} \hbox{ for }1\le\ell\le m_r,\quad \hbox{ and } \tilde{w}^{(r)}_j=0 \hbox{ for } j\neq j_1,\dots,j_{m_r}.$$
Similarly for $i=r'$, define
$$\tilde{w}^{(r')}_{j'_\ell}=w^{(r)}_{j_\ell} \hbox{ for }1\le\ell\le m_r,\quad \hbox{ and } \tilde{w}^{(r')}_{j'}=0 \hbox{ for } j'\neq j'_1,\dots,j'_{m_r}.$$
Define the conjugation $f:(\sigma,\{w^{(i)}_j\})\mapsto
(\tilde{\sigma},\{\tilde{w}^{(i)}_j\})$. It is immediate from the
above construction that $f$ is a conjugation. Moreover, $f$ has no
fixed point because $\sigma\neq\tilde{\sigma}$. Since
$\text{sgn}(\sigma)=-\text{sgn}(\tilde{\sigma})$, the summand in
(\ref{varphi_version2}) corresponding to $(\sigma,\{w^{(i)}_j\})$
cancels with the summand corresponding to
$(\tilde{\sigma},\{\tilde{w}^{(i)}_j\})$.

Finally, we are left with the case when all $w^{(i)}_j$ are $0$ or
$1$. Using Definition \ref{linearcompar} (b), and using the fact
that the monomial $\rho_1^w$ in $h(b,\w)$ has coefficient
${b\choose\w}$, we obtain

$$\varphi(D)=(-1)^{n\choose 2}\begin{vmatrix}
{b_1\choose 0}\rho_1^{0}&     {b_1\choose 1}\rho_1^{1}&\cdots& {b_1\choose n-1}\rho_1^{n-1}\\
\vdots&\vdots&\ddots&\vdots\\
{b_n\choose 0}\rho_1^{0}&     {b_n\choose 1}\rho_1^{1}&\cdots& {b_n\choose n-1}\rho_1^{n-1}\\
\end{vmatrix}=\begin{vmatrix}
{b_n\choose 0}&     {b_n\choose 1}&\cdots& {b_n\choose n-1}\\
\vdots&\vdots&\ddots&\vdots\\
{b_1\choose 0}&     {b_1\choose 1}&\cdots& {b_1\choose n-1}\\
\end{vmatrix}\rho_1^{n\choose 2}=c\cdot\rho_1^{n\choose 2},$$
where $c$ is the second determinant. Notice that ${b\choose
i}=b(b-1)\cdots(b-i+1)/i!$ is a polynomial of $b$ of degree $i$
whose leading term is $b^i/i!$. By appropriate column operations,
i.e., adding appropriate multiples of the first $i-1$ columns to the
$i$-th column for $1\le i\le n$, we obtain
$$c=\begin{vmatrix}
{b_n\choose 0}&     {b_n\choose 1}&\cdots& {b_n\choose n-1}\\
\vdots&\vdots&\ddots&\vdots\\
{b_1\choose 0}&     {b_1\choose 1}&\cdots& {b_1\choose n-1}\\
\end{vmatrix}
=\begin{vmatrix}
1& b_n&\frac{b_n^2}{2!}&\cdots& \frac{b_n^{n-1}}{(n-1)!}\\
\vdots&\vdots&\vdots&\ddots&\vdots\\
1& b_1&\frac{b_1^2}{2!}&\cdots& \frac{b_1^{n-1}}{(n-1)!}\\
\end{vmatrix}
=\frac{\prod_{i<j}(b_i-b_j)}{1!2!\cdots(n-1)!},
$$
by using standard results of Vandermonde matrices. Since
$b_1>b_2>\cdots>b_n$ are distinct integers  by assumption, $c$ is a
strictly positive integer.

(vi) Follows immediately from Definition \ref{linearcompar} (b).
\end{proof}

\subsection{Relation between $\varphi(D)$ and $\Delta(D)$}

We need the following elementary lemma.
\begin{lem}[\cite{LL}, Lemma 26]\label{sum} For $(\alpha_i,\beta_i)\in \N\times\N$ ($1\le
i\le n$) and $c,e\in\N$,
$$
(\sum_{i=1}^nx_i^c y_i^e)\cdot\begin{vmatrix}
                                      x_{1}^{\alpha_{1}} y_{1}^{\beta_{1}} & x_{1}^{\alpha_{2}} y_{1}^{\beta_{2}} &\cdots & x_{1}^{\alpha_{n}} y_{1}^{\beta_{n}} \\
                                      x_{2}^{\alpha_{1}} y_{2}^{\beta_{1}} & x_{2}^{\alpha_{2}} y_{2}^{\beta_{2}} &\cdots & x_{2}^{\alpha_{n}} y_{2}^{\beta_{n}} \\
                                      \vdots & \vdots & \ddots &\vdots \\
                                      x_{n}^{\alpha_{1}} y_{n}^{\beta_{1}} & x_{n}^{\alpha_{2}} y_{n}^{\beta_{2}} &\cdots & x_{n}^{\alpha_{n}} y_{n}^{\beta_{n}} \\
                                      \end{vmatrix}
=\sum_{i=1}^n\begin{vmatrix}
                                      x_{1}^{\alpha_{1}} y_{1}^{\beta_{1}} &\cdots & x_{1}^{\alpha_{i}+c} y_{1}^{\beta_{i}+e} &\cdots & x_{1}^{\alpha_{n}} y_{1}^{\beta_{n}} \\
                                      x_{2}^{\alpha_{1}} y_{2}^{\beta_{1}} &\cdots & x_{2}^{\alpha_{i}+c} y_{2}^{\beta_{i}+e} &\cdots & x_{2}^{\alpha_{n}} y_{2}^{\beta_{n}} \\
                                      \vdots & \ddots &\vdots & \ddots &\vdots \\
                                      x_{n}^{\alpha_{1}} y_{n}^{\beta_{1}} &\cdots &x_{n}^{\alpha_{i}+c} y_{n}^{\beta_{i}+e} &\cdots & x_{n}^{\alpha_{n}} y_{n}^{\beta_{n}} \\
                                      \end{vmatrix}.
$$
As a consequence,
$$0\equiv\sum_{i=1}^n\;\Delta(\{(\alpha_1,\beta_1),\dots,(\alpha_{i-1},\beta_{i-1}),(\alpha_i+c,\beta_i+e),(\alpha_{i+1},\beta_{i+1}),\dots,(\alpha_n,\beta_n)\})$$
modulo lower degrees.
\end{lem}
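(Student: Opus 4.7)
The identity is a direct consequence of the Leibniz formula for the determinant; the congruence then follows formally from $\Delta(D) \in I$ and the observation that $\sum_i x_i^c y_i^e$ lies in the maximal ideal.

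\emph{Step 1 (the determinantal identity).} Let $A$ denote the matrix with $(j,k)$-entry $x_j^{\alpha_k} y_j^{\beta_k}$, and let $A^{(i)}$ be the matrix obtained from $A$ by multiplying its $i$-th column entry-wise by $(x_1^c y_1^e, \dots, x_n^c y_n^e)^{T}$, so that $\det(A^{(i)})$ is precisely the $i$-th determinant appearing on the RHS. Expanding via Leibniz, the extra factor $x_j^c y_j^e$ enters exactly on the row $j = \sigma^{-1}(i)$ in each $\sigma$-summand, so
\begin{equation*}
\det(A^{(i)}) = \sum_{\sigma\in S_n} \mathrm{sgn}(\sigma)\, x_{\sigma^{-1}(i)}^c \, y_{\sigma^{-1}(i)}^e \prod_{j=1}^n x_j^{\alpha_{\sigma(j)}} y_j^{\beta_{\sigma(j)}}.
\end{equation*}
Summing over $i$ and reindexing $j_0 = \sigma^{-1}(i)$ (which for fixed $\sigma$ is a bijection on $\{1,\dots,n\}$) yields
\begin{equation*}
\sum_{i=1}^n \det(A^{(i)}) = \sum_{\sigma\in S_n}\mathrm{sgn}(\sigma)\Big(\sum_{j_0=1}^n x_{j_0}^c y_{j_0}^e\Big)\prod_{j=1}^n x_j^{\alpha_{\sigma(j)}} y_j^{\beta_{\sigma(j)}} = \Big(\sum_{j_0=1}^n x_{j_0}^c y_{j_0}^e\Big)\det(A),
\end{equation*}
which is the stated identity.

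\emph{Step 2 (the consequence).} Assume $(c,e) \neq (0,0)$, so $\sum_i x_i^c y_i^e \in (\mathbf{x}, \mathbf{y})$. Since $\Delta(D) \in I$, the LHS of the identity from Step 1 lies in $(\mathbf{x}, \mathbf{y}) I$. The RHS is exactly $\sum_i \Delta(D_i)$, where $D_i$ is obtained from $D$ by replacing its $i$-th point $(\alpha_i, \beta_i)$ with $(\alpha_i + c, \beta_i + e)$. Reducing modulo $(\mathbf{x},\mathbf{y}) I$ gives $\sum_i \Delta(D_i) \equiv 0$ in $M_{d_1 + c,\, d_2 + e}$, i.e. modulo lower degrees in the sense of the paper's convention.

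The only mild obstacle is bookkeeping the permutations correctly; once one pinpoints the single row that absorbs the factor $x_j^c y_j^e$ in each Leibniz summand, the identity falls out and the congruence is immediate. No idea beyond the Leibniz formula is required.
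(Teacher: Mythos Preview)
Your proof is correct. The paper does not prove this lemma itself but cites it from \cite{LL}; your Leibniz-expansion argument is the standard and expected one, and you correctly isolate the necessary hypothesis $(c,e)\neq(0,0)$ for the ``consequence'' part (the identity itself holds trivially when $(c,e)=(0,0)$, but the congruence does not).
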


Let us recall the definition of minimal staircase forms defined in
\cite{LL}, and then define special minimal staircase forms.
\begin{defn} We call $D=\{P_1,\dots,P_n\}\in\D_n$ a \emph{minimal staircase form} if
$|P_i|=i-1$ or $i-2$ for every $1\le i\le n$. For a minimal
staircase form $D$, let $\{i_1<i_2<\dots<i_\ell\}$ be the set of
$i$'s such that $|P_i|=i-1$, we define the \emph{partition type} of
$D$ to be the partition of (${n\choose 2}-\sum|P_i|$) consisting of
all the positive integers in the sequence
$$(i_1-1,i_2-i_1-1,i_3-i_2-1,\dots,i_\ell-i_{\ell-1}-1, n-i_\ell).$$
\end{defn}
\begin{exmp}
  For $n=8$ and $D=\{P_1,\dots,P_8\}$ satisfying
  $(|P_1|,\dots,|P_8|)=(0,1,1,2,4,4,5,6)$, the set
  $\{i\,\big{|}\, |P_i|=i-1\}$ equals $\{1,2,5\}$. The positive integers in the sequence $(1-1,2-1-1,5-2-1,8-5)$ is $(2,3)$, so the partition type of $D$ is
  $(2,3)$.
\end{exmp}

\begin{defn}\label{df:special minimal staircase form}
The data
$(m,n,(r_1,\dots,r_m),(s_1,\dots,s_m))\in\N\times(\N^+)\times\N^m\times\N^m$
satisfying $1\le r_1<r_2<\dots<r_m<r_{m+1}:=n$ and $0\le s_i\le
r_{i+1}-r_i-1$ ($1\le i\le m$) determines a $D\in\D_n$ as follows.
$$\aligned D=\{(0,0),(1,0),\cdots,(n-1,0)\}&\cup\{(r_1-1,1),(r_2-1,1),\dots,(r_m-1,1)\}
\\&\setminus\{(r_1+s_1,0),(r_2+s_2,0),\dots,(r_m+s_m,0)\}.\endaligned$$
We call $D$ a \emph{special minimal staircase form}.
\end{defn}
\begin{rem}
It is easy to see that a special minimal staircase form is indeed a
minimal staircase form. Using the notation in the definition, the
partition type of a special minimal staircase form $D$ is obtained
from $(s_1,s_2,\dots,s_m)$ by eliminating 0's and sorting the
sequence if necessary. The following picture gives a typical example
of a special minimal staircase form,
    $$
    \begin{picture}(150,25)
    \put(-3,-3){$\bullet$}\put(7,-3){$\bullet$}\put(7,7){$\bullet$}\put(17,-3){$\bullet$}
    \put(27,-3){$\bullet$}
    \put(37,7){$\bullet$}\put(47,-3){$\bullet$}
    \put(57,7){$\bullet$}\put(77,-3){$\bullet$}\put(87,-3){$\bullet$}\put(97,-3){$\bullet$}\put(107,-3){$\bullet$}
    \put(67,-3){$\bullet$}
    \boxs{-10,0}\boxs{0,0}\boxs{10,0}\boxs{20,0}\boxs{30,0}\boxs{40,0}
    \boxs{50,0}\boxs{60,0}\boxs{70,0} \boxs{80,0}\boxs{90,0}\boxs{100,0}
    \boxs{-10,10}\boxs{0,10}\boxs{10,10}\boxs{20,10}\boxs{30,10}\boxs{40,10}
    \boxs{50,10}\boxs{60,10}\boxs{70,10}\boxs{80,10}\boxs{90,10}\boxs{100,10}
    \linethickness{1pt}\put(0,0){\line(0,1){27}}
    \linethickness{1pt}\put(-15,0){\line(1,0){135}}
    \end{picture}
    $$
where $m=3$, $n=13$, $(r_1,r_2,r_3)=(2,5,7)$,
$(s_1,s_2,s_3)=(2,1,5)$, and the partition type is $(1,2,5)$.
\end{rem}

Let us recall the following two facts proved in \cite{LL}.
\begin{lem}[Minors Permuting Lemma in \cite{LL}]\label{lem:Minors Permuting Lemma} Let $D=\{P_1,\dots,P_n\}\in\mathfrak{D}$, $h, \ell, m\in\N^+$ satisfy $2\le h<h+\ell+m\le n+1$,
$|P_h|=h-1,|P_{h+\ell}|=h+\ell-1$, $|P_{h+\ell+m}|=h+\ell+m-1$ (this
condition holds if $h+\ell+m=n+1$ by assumption). Suppose
$|P_{h+\ell}|_x,...,|P_{h+\ell+m-1}|_x\geq \ell$. Define
$$\aligned D'=\{&P_1,P_2, \dots, P_{h-1},
P_{h+\ell}-(\ell,0),P_{h+\ell+1}-(\ell,0),\dots,P_{h+\ell+m-1}-(\ell,0),\\
&P_{h}+(m,0),P_{h+1}+(m,0),\dots,P_{h+\ell-1}+(m,0),
P_{h+\ell+m},\dots,P_{n}\}.\endaligned$$ Then
$\Delta(D)\equiv\Delta(D')$ {\rm(modulo lower degrees)}.
\end{lem}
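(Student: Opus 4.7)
The plan is to deduce the Minors Permuting Lemma from repeated applications of the sum identity (Lemma~\ref{sum}). That identity produces, for any auxiliary configuration $\tilde D$ and shift $(c,e)$ with $c+e\ge 1$, the relation
\[
\sum_{i=1}^n \Delta\bigl(\tilde D_i^{+(c,e)}\bigr) \equiv 0 \pmod{\text{lower degrees}},
\]
where $\tilde D_i^{+(c,e)}$ denotes $\tilde D$ with its $i$-th point translated by $(c,e)$. By choosing $\tilde D$, $(c,e)$, and an iteration scheme with care, the relation will collapse to $\Delta(D) \equiv \Delta(D')$, with every other contribution vanishing because two columns of the underlying matrix coincide.

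First I would verify the routine bi-degree check: shifting block~1 by $(m,0)$ and block~2 by $(-\ell,0)$ changes the $x$-degree by $\ell m - m\ell = 0$ and leaves the $y$-degree fixed, so $\Delta(D)$ and $\Delta(D')$ live in the same $M_{d_1,d_2}$. Then I would induct on $\ell$. For the inductive step $\ell\ge 2$, I would pass the single point $P_{h+\ell-1}$ through block~2 first, producing an intermediate configuration to which the inductive hypothesis applies with $\ell$ replaced by $\ell-1$.

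For the base case $\ell=1$, I would apply Lemma~\ref{sum} with $(c,e)=(1,0)$ to the configuration $\tilde D$ obtained from $D$ by replacing each $P_{h+j}$ $(1\le j\le m)$ by $P_{h+j}-(1,0)$, which is legal because the hypothesis $|P_{h+j}|_x\ge \ell=1$ keeps the shifted point in $\N\times\N$. The resulting relation splits into terms indexed by $i\in\{h,h+1,\dots,h+m\}$ and by the complement. For $i=h$, the shift sends $P_h$ to $P_h+(1,0)$; iterating this single-row move $m$ times (equivalently, applying Lemma~\ref{sum} once with $(c,e)=(m,0)$ to a suitable auxiliary configuration) yields a term proportional to $\Delta(D')$. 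For $i\in\{h+1,\dots,h+m\}$, the shift restores the corresponding point to its original position and, after re-sorting according to \eqref{order}, contributes $\Delta(D)$.

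The main obstacle is the vanishing argument for the extraneous terms with $i\notin\{h,\dots,h+m\}$. Here the saturation conditions $|P_h|=h-1$, $|P_{h+\ell}|=h+\ell-1$, $|P_{h+\ell+m}|=h+\ell+m-1$ are essential: they guarantee that the ambient points are packed tightly enough on the staircase that $P_i+(1,0)$ coincides with $P_{i+1}$ (or with an already translated $P_{h+j}-(1,0)$), forcing $\Delta\bigl(\tilde D_i^{+(1,0)}\bigr)=0$. The signs and multiplicities produced by the re-orderings require careful bookkeeping; once that is done, the surviving contributions combine into $\pm\bigl(\Delta(D)-\Delta(D')\bigr)$, closing the induction and establishing the lemma.
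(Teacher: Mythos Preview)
First, note that this paper does not supply its own proof of the Minors Permuting Lemma; the result is imported from \cite{LL} and only quoted here.  So there is no in-paper argument to compare your proposal against, and the assessment below concerns the internal soundness of your sketch.

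Your plan has two genuine gaps.  In the base case $\ell=1$ you form $\tilde D$ by replacing each $P_{h+j}$ ($1\le j\le m$) by $P_{h+j}-(1,0)$ and then apply Lemma~\ref{sum} with $(c,e)=(1,0)$.  You assert that for each $i\in\{h+1,\dots,h+m\}$ the resulting term ``restores the corresponding point to its original position and \dots\ contributes $\Delta(D)$.''  But restoring a \emph{single} point $P_{h+j}$ leaves the other $m-1$ points still shifted by $-(1,0)$; the configuration you get is not $D$ unless $m=1$.  For $m\ge 2$ you produce $m$ pairwise distinct configurations, none equal to $D$, and no mechanism is offered to collapse them.  Similarly, the ``iterate $m$ times'' treatment of the $i=h$ term is not the same as one application of Lemma~\ref{sum} with $(c,e)=(m,0)$: each iteration introduces a full sum over all $n$ indices, so you spawn a tree of new terms rather than a single path to $D'$.

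The vanishing claim for $i\notin\{h,\dots,h+m\}$ is also unjustified.  The hypotheses fix only $|P_h|$, $|P_{h+\ell}|$, $|P_{h+\ell+m}|$; they impose nothing on $P_1,\dots,P_{h-1}$ or on $P_{h+\ell+m+1},\dots,P_n$ beyond the global ordering \eqref{order}.  There is no reason $P_i+(1,0)$ for such $i$ should coincide with another point of $\tilde D$, so $\Delta(\tilde D_i^{+(1,0)})$ need not vanish.  A correct argument must organise the induction so that each single application of Lemma~\ref{sum} involves only points \emph{inside} the two blocks being swapped (so that the boundary conditions $|P_h|=h-1$, $|P_{h+\ell}|=h+\ell-1$, $|P_{h+\ell+m}|=h+\ell+m-1$ really do force all extraneous shifts to collide), or else must exhibit an explicit pairwise cancellation among the outside terms.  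As written, your proposal does neither, and the base case does not close.
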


\begin{lem}[Main Theorem of \cite{LL}]\label{thm:Main Theorem of LL}
Suppose $k$ is a positive integer such that $n\ge 8k+5$ and  $d_1,
d_2\ge (2k+1)n$ are two integers whose sum is $n(n-1)/2-k$. Then
$M_{d_1,d_2}$ is minimally generated by $p(k)$ elements, i.e., $\dim
M_{d_1,d_2}=p(k)$. Furthermore, there is a one-to-one correspondence
between partitions of $k$ and generators, namely
$$
(\mu=\sum m_i j_i\in\Pi_k)\longleftrightarrow (\emph{a minimal
staircase form of partition type } \mu).
$$
\end{lem}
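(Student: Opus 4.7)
The plan is to establish separately the upper bound $\dim M_{d_1,d_2}\le p(k)$ and a matching lower bound by exhibiting $p(k)$ linearly independent elements, with the map $\varphi$ from \S3 serving as the key tool for the latter. The correspondence $\mu\leftrightarrow D_\mu$ will come from the block decomposition of $\varphi$.

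For the upper bound, note that any $D=\{P_1,\dots,P_n\}\in\D_n$ of bi-degree $(d_1,d_2)$ satisfies $\sum|P_i|=\binom{n}{2}-k$, so only a few $i$'s can have $|P_i|>i-1$ and the overall ``defect'' from a staircase is of order $k$. Whenever some $|P_i|>i-1$, I would apply Lemma \ref{sum} (i.e.\ multiplication by a power sum $\sum x_i^cy_i^e$ with suitable $c,e$) to rewrite $\Delta(D)$ modulo lower degrees as a sum of $\Delta(D')$ where $D'$ is strictly closer to staircase shape; the hypothesis $d_1,d_2\ge(2k+1)n$ guarantees enough room in both coordinate directions to push the excess onto other points without leaving $\N\times\N$. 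Iterating reduces $M_{d_1,d_2}$ to being spanned by classes of $\Delta(D)$ with $D$ a minimal staircase form. The Minors Permuting Lemma (Lemma \ref{lem:Minors Permuting Lemma}) then identifies any two minimal staircase forms of the same partition type $\mu$, because the separation hypothesis $n\ge 8k+5$ between blocks supplies the parameters $h,\ell,m$ required there. Since the number of partition types that appear is $p(k)$, this bounds $\dim M_{d_1,d_2}$ by $p(k)$.

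For the lower bound, for each $\mu=(j_1\le\dots\le j_m)\in\Pi_k$ I would take $D_\mu$ to be a \emph{special} minimal staircase form of type $\mu$ in bi-degree $(d_1,d_2)$, using Definition \ref{df:special minimal staircase form}: the large lower bounds on $d_1,d_2$ allow the $r_i$'s to be spaced out and the $s_i$'s to be chosen as a permutation of $\{j_1,\dots,j_m\}$ together with enough $0$'s to match the bi-degree. To prove linear independence of $\{\bar{\Delta(D_\mu)}\}_{\mu\in\Pi_k}$ in $M_{d_1,d_2}$, I would apply $\bar\varphi$. By Lemma \ref{varphi_computation_lemma}(iv), $\varphi(D_\mu)$ factors over the blocks; each trivial block contributes $1$, while each nontrivial block associated to a part $j$ is (after the translation built into the block decomposition) exactly the configuration of Lemma \ref{varphi_computation_lemma}(vi) and contributes $\rho_j$. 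Hence $\varphi(D_\mu)=\pm\rho_\mu$, and the monomials $\{\rho_\mu\}_{\mu\in\Pi_k}$ are manifestly linearly independent in $\C[\rho]$.

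The main obstacle is the structural fact that $\varphi$ actually descends to a well-defined linear map $\bar\varphi\colon M_{d_1,d_2}\to\C[\rho]$, i.e.\ that $\varphi$ vanishes on the image of $(\mathbf{x},\mathbf{y})I\cap\C[\mathbf{x},\mathbf{y}]^\epsilon$; this is a nontrivial property of $\varphi$ whose verification is the main business of \S3 of the present paper. A secondary, more combinatorial difficulty is controlling the iterative Lemma \ref{sum} reductions in the upper bound and checking that every partition type $\mu\in\Pi_k$ is indeed realized by a special minimal staircase form of the prescribed bi-degree — both of which are precisely what the asymptotic assumptions $n\ge 8k+5$ and $d_1,d_2\ge(2k+1)n$ are engineered to ensure.
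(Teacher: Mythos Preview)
Your proposal has a fatal circularity. In this paper the lemma is \emph{quoted} from \cite{LL}, not proved; it is used as a black box precisely to set up the $\varphi$-machinery you want to invoke. Concretely: the fact that $\bar\varphi$ is well defined on $M_{d_1,d_2}$ (Lemma~\ref{lem:bar phi}) rests on Proposition~\ref{prop:varphi and Delta} together with the linear independence of the $\{\Delta(F_\mu)\}$, which is Lemma~\ref{lem:linear independence}(i); and the proof of Lemma~\ref{lem:linear independence}(i) explicitly appeals to Lemma~\ref{thm:Main Theorem of LL}. So when you write ``this is a nontrivial property of $\varphi$ whose verification is the main business of \S3,'' you are pointing to exactly the place where the argument loops back on itself: \S3 does not verify that $\varphi$ descends \emph{ab initio}, it deduces it from the very statement you are trying to prove.

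Your upper-bound sketch (reduce to minimal staircase forms via Lemma~\ref{sum}, then identify forms of the same partition type via the Minors Permuting Lemma) is in the right spirit and is indeed how \cite{LL} proceeds. The trouble is only with the lower bound. In \cite{LL} the linear independence of the minimal staircase forms is obtained by a direct, hands-on argument (essentially specializing/degenerating and checking that the forms do not collapse modulo $(\mathbf{x},\mathbf{y})I$), not by passing through $\bar\varphi$. If you want a self-contained proof here, you must supply an independent reason why $\varphi$ kills $(\mathbf{x},\mathbf{y})I$---or, equivalently, give a direct linear-independence argument for the $\Delta(F_\mu)$'s---without invoking anything downstream of Lemma~\ref{lem:linear independence}.
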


The following lemma is essential for this paper.
\begin{lem}\label{lem:linear independence}
{\rm(i)} Let $d_1,d_2,k\in \N$ and $d_1+d_2={n\choose 2}-k$. Define
$$\Pi'_k=\left\{\mu\in\Pi_k
\left|\begin{array}{l} \textrm{there exists a minimal staircase form
$F_\mu\in\D_n$ of}\\
\textrm{partition type $\mu$ and of bi-degree $(d_1,d_2)$}\\
\end{array} \right.\right\}.
$$
If there are coefficients $a_\mu\in \C$ ($\mu\in\Pi'_k$) satisfying
$$\sum_{\mu\in\Pi'_k} a_\mu\, \Delta(F_\mu)\equiv 0\quad \textrm{   \rm (modulo
lower degrees)},$$ then $a_\mu=0$, $\forall\mu\in\Pi'_k$. In other
words, $\{\Delta(F_\mu)\}_{\mu\in\Pi'_k}$ form a linearly
independent set in $M_{d_1,d_2}$.

{\rm(ii)} Any two special minimal staircase form in $\D_n$ of the
same partition type and the same bi-degree are equivalent modulo
lower degrees.
\end{lem}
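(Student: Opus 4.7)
The plan is to prove part (i) using the linear map $\bar\varphi$ induced by $\varphi$ together with a triangularity argument in the refinement order on $\Pi_k$, and to prove part (ii) as a direct application of the Minors Permuting Lemma (Lemma \ref{lem:Minors Permuting Lemma}). The two parts can be treated independently.

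\emph{Part (i).} Apply $\bar\varphi:M_{d_1,d_2}\to\C[\rho]$ to the hypothesized relation $\sum_{\mu\in\Pi'_k}a_\mu\Delta(F_\mu)\equiv 0$; it suffices to prove that $\{\varphi(F_\mu)\}_{\mu\in\Pi'_k}$ is linearly independent in $\C[\rho]$. By Lemma \ref{varphi_computation_lemma}(iv), $\varphi(F_\mu)$ factors as a product over the blocks of $F_\mu$: each trivial (size-one) block contributes $1$, and each non-trivial block of size $s+1$ (corresponding to a part $s$ of $\mu$) contributes a weight-$s$ polynomial of the form $c_r\rho_s+q_r$, where $q_r\in\C[\rho_1,\dots,\rho_{s-1}]$ and $c_r=b_1^{(r)}-b_2^{(r)}>0$; here $b_1^{(r)}>b_2^{(r)}\geq 0$ denote the $y$-coordinates of the two translated points $Q_1,Q_2$ in the block that satisfy $|Q_j|=0$. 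To prove this I would expand the determinantal formula of Lemma \ref{varphi_computation_lemma}(b): the coefficient of $\rho_s$ in $h(b_j,i-|Q_j|)$ equals $b_j$ when $i-|Q_j|=s$ and vanishes otherwise, while the constant term equals $1$ precisely when $i=|Q_j|$. A direct case analysis of the expansion then shows that only the two rows $j\in\{1,2\}$ with $|Q_j|=0$ yield a surviving permutation, and the signs (coming from cycles of lengths $s+1$ and $s$ respectively) combine to give exactly $b_1^{(r)}-b_2^{(r)}$.

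Multiplying the block contributions gives
$$\varphi(F_\mu)\;=\;c_\mu\,\rho_\mu\;+\;\sum_{\nu}c_{\mu,\nu}\,\rho_\nu,\qquad c_\mu:=\prod_r c_r\neq 0,$$
where every $\nu$ appearing in the sum is obtained from $\mu$ by splitting at least one part $\mu_r$ into parts strictly smaller than $\mu_r$; in particular such a $\nu$ is a strict refinement of $\mu$ in the refinement partial order on $\Pi_k$. Pick any total order on $\Pi'_k$ that extends this refinement order, with coarser partitions listed first. In that order, the matrix whose $(\nu,\mu)$-entry is the coefficient of $\rho_\nu$ in $\varphi(F_\mu)$ is upper triangular with nonzero diagonal entries $c_\mu$, hence invertible. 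Consequently $\{\varphi(F_\mu)\}_{\mu\in\Pi'_k}$ is linearly independent and every $a_\mu$ vanishes.

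\emph{Part (ii).} Two special minimal staircase forms of the same partition type and bi-degree differ only in their data tuples $(r_1<\dots<r_m;\,s_1,\dots,s_m)$ and $(r_1'<\dots<r_m';\,s_1',\dots,s_m')$, whose multisets of nonzero $s_j$'s agree. I would transform one tuple into the other by a finite sequence of moves of two kinds: (a) adjacent transposition of two neighboring pairs $(r_j,s_j)\leftrightarrow(r_{j+1},s_{j+1})$ together with an appropriate shift of $r_{j+1}$, and (b) sliding a single $r_j$ by one step by swapping its non-trivial block with an adjacent trivial block. Each move is exactly the configuration of Lemma \ref{lem:Minors Permuting Lemma} applied to the relevant window of consecutive points, and the $x$-coordinate hypothesis $|P_{h+\ell}|_x,\dots,|P_{h+\ell+m-1}|_x\geq\ell$ is easy to verify, since every point of a special form lies either on $y=0$ or on $y=1$ and the relevant indices are far enough from the origin. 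The main obstacle in the whole plan is the block-level coefficient computation in part (i) establishing $c_r=b_1^{(r)}-b_2^{(r)}\neq 0$; once that is in hand, both the triangularity argument and the block-swap reduction proceed formally.
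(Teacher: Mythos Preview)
Your approach to part (ii) is the same as the paper's: the Minors Permuting Lemma is exactly the tool, and your description of the block-swapping moves is a reasonable elaboration of what the paper states in one line.

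For part (i), however, there is a genuine circularity. You invoke the induced map $\bar\varphi:M_{d_1,d_2}\to\C[\rho]$, but in the paper's logical structure the well-definedness of $\bar\varphi$ (Lemma~\ref{lem:bar phi}) is established \emph{via} Proposition~\ref{prop:varphi and Delta}, whose proof in turn quotes Lemma~\ref{lem:linear independence}(i) --- the very statement you are proving. Concretely: knowing that $\{\varphi(F_\mu)\}_\mu$ is linearly independent in $\C[\rho]$ tells you nothing about linear independence in $M_{d_1,d_2}$ unless you already know that $\varphi$ kills $(\mathbf{x},\mathbf{y})I$ in each bi-degree, and that fact is not available at this point. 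Your block computation and the refinement-order triangularity are correct (they amount to Lemma~\ref{varphi_computation_lemma}(iv) together with Lemma~\ref{partitions with one part}), but they do not close the argument.

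The paper avoids this by a completely different, external route: it enlarges $n$ to a large $N$ by adjoining points $P_{n+1},\dots,P_N$ with $|P_i|=i-1$ so that the hypotheses of the Main Theorem of \cite{LL} (quoted here as Lemma~\ref{thm:Main Theorem of LL}) are met. That theorem gives linear independence of the extended $\Delta(F'_\mu)$ in $M^{(N)}$ directly, and since $\Delta(F'_\mu)\equiv\Delta(F_\mu)\cdot f_0$ for a single polynomial $f_0$ independent of $\mu$, the independence descends to $\{\Delta(F_\mu)\}_\mu$ in $M^{(n)}$. In other words, part (i) is a bootstrap from the earlier paper \cite{LL}, and only \emph{after} it is in hand does $\bar\varphi$ become available as a tool.
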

\begin{proof}
{(i)} Choose a sufficiently large integer $N$ and choose $(N-n)$
points $P_{n+1},\dots,P_N\in\N\times\N$ such that $|P_i|=i-1$ for
$n+1\le i\le N$ and
$$|P_{n+1}|_x+\cdots+|P_N|_x\ge (2k+1)N,\quad |P_{n+1}|_y+\cdots+|P_N|_y\ge (2k+1)N.$$ let $F'_\mu=F_\mu\cup\{P_{n+1},P_{n+2},\dots,P_N\}$.
The condition of Lemma \ref{thm:Main Theorem of LL} is satisfied, so
$\Delta(F'_\mu)$ for $\mu\in\Pi'_k$ are linearly independent modulo
lower degrees. But $\Delta(F'_\mu)$ is equivalent to
$\Delta(F_\mu)\cdot f_0$ for a polynomial $f_0$ independent of
$\mu$. (In fact $f_0=\prod_{i,j}a_{ij}$ for $1\le i<j\le N$ and
$j\ge n+1$, with appropriate choices $a_{ij}=x_j-x_i$ or $y_j-y_i$.
We do not need the exact formula here.) So the linear independence
of $\{\Delta(F'_\mu)\}_{\mu\in\Pi'_k}$ implies the linear
independence of $\{\Delta(F_\mu)\}_{\mu\in\Pi'_k}$.

(ii) The claim follows immediately from Minors Permuting Lemma
(Lemma \ref{lem:Minors Permuting Lemma}).
\end{proof}

\begin{prop}\label{prop:varphi and Delta} Let $n\in\N^+$, $D=\{P_1,\dots,P_n\}\in \D$ and $k={n\choose2}-\sum_{i=1}^n|P_i|\ge 0$.
Suppose $N\in\N^+$ satisfies $N>N_0:=(\sum_{i=1}^n|P_i|_y)(k+1)$.
Define
$$\tilde{D}=\{(0,0),(1,0),\dots,(N-1,0),P_1+(N,0),\dots,P_n+(N,0)\}\in\D_{N+n}.$$
Let $d_2=\sum_i |P_i|_y$ be the $y$-degree of $D$ (which is also the $y$-degree of
$\tilde{D}$). For $\mu\in\Pi_{d_2,k}$, let $F_\mu$ be a special
minimal staircase form with the same bi-degree as $\tilde{D}$ and be
of partition type $\mu$. Then there exist unique integers $a_\mu$
($\mu\in\Pi_{d_2,k}$) such that
$$
\Delta(\tilde{D}) \equiv \sum_{\mu \in \Pi_{d_2,k}} a_\mu\cdot
\Delta(F_\mu)\quad \emph{(modulo lower degrees)}.$$ In fact, the
integers $a_\mu$ satisfy
\begin{equation}\label{eq:formula for amu}\varphi(D)=\sum_{\mu\in\Pi_{d_2,k}} a_\mu \rho_\mu.
\end{equation}

\end{prop}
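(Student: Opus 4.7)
The plan is to establish existence and uniqueness of the $a_\mu$, and then identify $\sum_\mu a_\mu \rho_\mu$ with $\varphi(D)$ via the computational lemmas of Section 3. Uniqueness is immediate from Lemma \ref{lem:linear independence}(i): the family $\{\Delta(F_\mu)\}_{\mu \in \Pi'_k}$ is linearly independent in $M_{d_1,d_2}$, where $(d_1,d_2)$ is the bi-degree of $\tilde{D}$. Moreover $\Pi'_k$ coincides with $\Pi_{d_2,k}$, because matching the $y$-degree of $\tilde{D}$ forces the number of second-row points of a special minimal staircase form to equal $d_2$, so the partition type has at most $d_2$ parts, and conversely for every $\mu \in \Pi_{d_2,k}$ a valid special form of partition type $\mu$ and the correct bi-degree can be constructed.

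For existence, the hypothesis $N > N_0 = d_2(k+1)$ supplies enough staircase prefix in $\tilde{D}$ to carry out a reduction. Starting from $\Delta(\tilde{D})$, I would apply the relation of Lemma \ref{sum} (to cancel any row with $|P_i|>i-1$) together with the Minors Permuting Lemma (Lemma \ref{lem:Minors Permuting Lemma}) (to shuffle points into minimal-staircase position), rewriting $\Delta(\tilde{D})$ modulo lower degrees as an integer combination $\sum_j c_j \Delta(G_j)$ with each $G_j$ a minimal staircase form; the bound $N > d_2(k+1)$ is what guarantees that each successive rewrite stays inside $\N\times\N$ and within the fixed bi-degree window. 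Lemma \ref{lem:linear independence}(ii) then lets us replace each $G_j$ by a canonical special minimal staircase form $F_{\mu_j}$ of the same partition type, yielding $\Delta(\tilde{D}) \equiv \sum_{\mu \in \Pi_{d_2,k}} a_\mu \Delta(F_\mu)$ with $a_\mu \in \Z$.

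The identification $\varphi(F_\mu) = \rho_\mu$ is a direct computation. By Lemma \ref{varphi_computation_lemma}(iv), $\varphi(F_\mu)$ factors as a product over its blocks; by the structure of a special minimal staircase form (Definition \ref{df:special minimal staircase form}), each block is, after translation, either a singleton $\{(0,0)\}$ (contributing $1$ by the vacuous case of Definition \ref{linearcompar}) or of the shape $\{(-1,1),(0,0),(1,0),\ldots,(s-1,0)\}$ with $s = s_j$ (contributing $\rho_{s_j}$ by Lemma \ref{varphi_computation_lemma}(vi)). Multiplying over all blocks and discarding factors with $s_j = 0$ yields $\rho_\mu$. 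On the other hand, Lemma \ref{varphi_computation_lemma}(ii) applied with $m = N$ and $Q_i = (i-1,0)$ gives $\varphi(\tilde{D}) = \varphi(D)$. Combining with the $\Z$-linearity in Definition \ref{linearcompar}(c) produces $\varphi(D) = \varphi(\tilde{D}) = \sum_\mu a_\mu \varphi(F_\mu) = \sum_\mu a_\mu \rho_\mu$, as required.

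The main obstacle is the existence step. Making the reduction rigorous requires showing that iterated application of Lemmas \ref{sum} and \ref{lem:Minors Permuting Lemma} terminates in an integer combination of special-staircase $\Delta(F_\mu)$'s without producing terms of the wrong bi-degree, and that the precise threshold $N > d_2(k+1)$ is exactly what is needed to keep all intermediate shifts in $\N\times\N$. A secondary subtlety is that the chain $\varphi(\tilde{D}) = \sum_\mu a_\mu \varphi(F_\mu)$ tacitly uses that $\varphi$ descends to a well-defined map $\bar\varphi : M_{d_1,d_2} \to \Z[\rho]$, i.e.\ that $\varphi$ annihilates every syzygy of the form produced by Lemma \ref{sum}; this can be checked directly from the determinantal expression in Definition \ref{linearcompar}(b) using the identity $h(b+1,w) = h(b,w) + \rho_1 h(b,w-1) + \cdots$ established in the proof of Lemma \ref{varphi_computation_lemma}(iii).
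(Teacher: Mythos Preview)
Your approach diverges from the paper's in an essential way. The paper does \emph{not} first establish existence abstractly and then read off the coefficients via $\varphi$; instead it gives an explicit reduction algorithm. It partitions the first $N_0=d_2(k+1)$ points of the staircase prefix into $d_2$ segments of length $k+1$, and then, running through the $y$-coordinates of the $P_i$ one at a time (the sequence $(\star)$), uses Lemma~\ref{sum} at each step to trade a unit of $y$-height on some $P_i$ for a choice $w^{(i)}_j\in\{0,\dots,k\}$ recorded in the corresponding segment. After $d_2$ steps every $P_i$ sits on the $x$-axis, the segments have become the second-row dots of a special minimal staircase form, and the accumulated sign is $(-1)^k\operatorname{sgn}(\sigma)$ where $\sigma$ is the permutation determined by the final $x$-coordinates. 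The punchline is that the resulting coefficient of each special staircase form is \emph{literally} a summand in Definition~\ref{linearcompar}(a), so \eqref{eq:formula for amu} drops out by inspection. Existence and the coefficient formula are thus proved in one stroke, and no appeal to a well-defined $\bar\varphi$ is needed.

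Your route is more conceptual but runs into a logical ordering problem: in the paper, well-definedness of $\bar\varphi$ (Lemma~\ref{lem:bar phi}) is \emph{deduced from} Proposition~\ref{prop:varphi and Delta}, so invoking it here is circular unless you supply an independent proof. Your claim that $\varphi$ annihilates the Lemma~\ref{sum} relations is in fact correct (for $(c,e)$ with $c+e\ge1$ one checks that the modified row lies in the column span of the original matrix via a nilpotent shift, so the trace vanishes), but this takes more than the single $h$-identity you cite, and you also use the Minors Permuting Lemma and Lemma~\ref{lem:linear independence}(ii), whose proofs in \cite{LL} must then be traced back to Lemma~\ref{sum} moves. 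A smaller slip: Lemma~\ref{lem:linear independence}(ii) only equates two \emph{special} minimal staircase forms of the same type, so it does not by itself let you replace an arbitrary minimal staircase form $G_j$ by a special $F_{\mu_j}$. Finally, your existence paragraph is really a promissory note; making it precise amounts to rediscovering the paper's algorithm, which is where the bound $N>d_2(k+1)$ and the specific structure of special staircase forms actually enter.
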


\begin{proof} In this proof we use $D\in\D$ that does
not satisfy the assumption (\ref{order}).

The uniqueness of $a_\mu$ follows from the fact that $\{\Delta(F_\mu)\}_{\mu\in\Pi'_k}$ form a linearly
independent set in $M_{d_1,d_2}$, proved in Lemma \ref{lem:linear independence}. For the existence of $a_\mu$,
we shall give an algorithm showing that those $a_\mu$ are exactly the integers satisfying (\ref{eq:formula for amu}).

We separate the set $\{(1,0),(2,0),\dots,(N_0,0)\}$ into
$(\sum_{i=1}^n|P_i|_y)$ segments, where the $r$-th segment for $1\le
r\le (\sum_{i=1}^n|P_i|_y)$ consists of $(k+1)$ points $\{(i,0)|\;
(r-1)(k+1)+1\le i\le r(k+1)\}$.

Consider the following sequence of length $d_2$.
$$(\star):\quad (1,|P_1|_y),\dots,(1,2),(1,1),\; (2,|P_2|_y),\dots,(2,2), (2,1),\; \dots,\; (n,|P_n|_y),\dots,(n,2),(n,1)$$
with the natural total order that $(i',j')<(i,j)$ if $(i',j')$ is to
the left of $(i,j)$. For $(i,j)$ in the above sequence, define
$r(i,j)\in\N^+$ to be the integer such that$(i,j)$ is the
$r(i,j)$-th pair in the sequence $(\star)$.

Denote $Q^{(0)}_s=(s-1,0)$ for $1\le s\le N$ and
$P^{(0)}_t=P_t+(N,0)$ for $1\le t\le n$ and denote
$$D^{(0)}:=\tilde{D}=\{Q^{(0)}_1,Q^{(0)}_2,\dots,Q^{(0)}_N,P^{(0)}_1,P^{(0)}_2,\dots,P^{(0)}_n\}.$$
Given a set of nonnegative integers
$\textbf{w}=\{w^{(i')}_{j'}\}_{(i',j')\in (\star)}$, we construct
$$D^{(r)}=\{Q^{(r)}_1\dots,Q^{(r)}_N,P^{(r)}_1,\dots,P^{(r)}_n\}$$
inductively on $r\in[1,\sum_{\ell=1}^n|P_\ell|_y]$. Here we do not
require $D^{(r)}$ to satisfy the assumption of order defined in
(\ref{order}). Suppose $D^{(r-1)}$ has been constructed and the
$r$-th element in the sequence $(\star)$ is the pair $(i,j)$. Then
$D^{(r)}$ is constructed as follows.
$$\aligned
&Q^{(r)}_{(r-1)(k+1)+2+w^{(i)}_j}=((r-1)(k+1),1) ;\\
&Q^{(r)}_\ell=Q^{(r-1)}_\ell,\quad \hbox{ for } 1\le\ell\le N\hbox{ and } \ell\neq (r-1)(k+1)+2+w^{(i)}_j;\\
&P^{(r)}_{i}=P^{(r-1)}_{i}+(w^{(i)}_j+1,-1) ;\\
&P^{(r)}_\ell=P^{(r-1)}_\ell,\quad \hbox{ for } 1\le\ell\le n\hbox{ and } \ell\neq i.\\
\endaligned$$

The following can be proved inductively on $r$:
\begin{equation}  \label{eq:delta}
\Delta(\tilde{D})\equiv (-1)^r\sum_\textbf{w}
\Delta(D^{(r)}_{\textbf{w}}),\quad \hbox{ (modulo lower degrees)}
\end{equation}
where $\textbf{w}$ runs through all possible sets of integers
$\{w^{(i')}_{j'}\}_{(i',j')\le (i,j)}$ where
$w^{(i')}_{j'}\in[0,k]$. Indeed, for $r=1$ and $|P_1|_y>0$ (the case
$|P_1|_y=0$ is similar) we need to show that (for simplicity of
notation we use $w$ in place of $w^{(1)}_{|P_1|_y}$)
$$\Delta(\tilde{D})+\sum_{0\le w\le k} \Delta(\{(0,0),\dots,(w,0),(0,1),(w+2,0),\dots,(N-1,0), P_1+(w+1,-1),P_2,\dots,P_n\})$$
is equivalent to 0 modulo lower degrees. But this is an immediate
consequence of Lemma \ref{sum} by plugging in
$(c,e)=P^{(0)}_1-(0,1)$ and
$$\{(\alpha_1,\beta_1),(\alpha_2,\beta_2),\dots\}=\{(0,0),(1,0),\dots,(N-1,0),(0,1),P^{(0)}_2,P^{(0)}_3,\dots,P^{(0)}_n\}.$$
Here we can assume $w\le k$ because otherwise the total degree of
the polynomial $\Delta(\tilde{D})$ is strictly greater than
$N+n\choose 2$ hence
 $\Delta(\tilde{D})\equiv 0$ modulo lower degrees.

 For $r=2$, we only consider the case $|P_1|_y\ge 2$, since the
other case is similar. By induction we have
$$\Delta(\tilde{D})\equiv -\sum_{0\le w^{(1)}_{|P_1|_y}\le k} \Delta(D^{(1)}_{w^{(1)}_{|P_1|_y}}),\quad \hbox{ (modulo lower degrees)}.$$
By a similar argument as in the case $r=1$,
$$\Delta(D^{(1)}_{w^{(1)}_{|P_1|_y}})\equiv -\sum_{0\le w^{(1)}_{|P_1|_y-1}\le k} \Delta(D^{(1)}_{\{w^{(1)}_{|P_1|_y},w^{(1)}_{|P_1|_y-1}\}}),\quad \hbox{ (modulo lower degrees)}.$$
Combine the above two formulas together, we have
$$\Delta(\tilde{D})\equiv (-1)^2\sum_{0\le w^{(1)}_{|P_1|_y},w^{(1)}_{|P_1|_y-1}\le k} \Delta(D^{(1)}_{\{w^{(1)}_{|P_1|_y},w^{(1)}_{|P_1|_y-1}\}}),\quad \hbox{ (modulo lower degrees)}.$$
An easy induction similar to the above argument gives the proof of
(\ref{eq:delta}).

Now look at (\ref{eq:delta}) when $r=r_0=\sum_{\ell=1}^n|P_\ell|_y$.
The $y$-coordinates of $P^{(r_0)}_1,\dots,P^{(r_0)}_n$ are all zero.
A necessary condition for $\Delta(D^{(r_0)}_\textbf{w})\not\equiv0$
is that
$$\{|P^{(r_0)}_1|_x,|P^{(r_0)}_2|_x,\dots,|P^{(r_0)}_n|_x\} \hbox{ is a permutation of }
\{N,N+1,\dots,N+n-1\}$$ and hence we can assume such a condition
holds. Let $\sigma\in S_n$ be the permutation that satisfies
$||P^{(r_0)}_i|_x=\sigma(i)+N-1$. Since
$$P^{(r_0)}_i=P^{(0)}_i+\sum_{j=1}^{|P_i|_y}w^{(i)}_j,$$
we have
$$\sum_{j=1}^{|P_i|_y}w^{(i)}_j=P^{(r_0)}_i-P^{(0)}_i=(\sigma(i)+N-1)-(N+|P_i|)=\sigma(i)-1-|P_i|=\sigma(i)-1-a_i-b_i,$$
which is exactly the condition in the definition of $\varphi(D)$
(cf. Definition \ref{linearcompar}(a)). Next, we shall figure out
the correct sign. For this, we have to rearrange the order of points
in $D^{(r_0)}_\textbf{w}$ to satisfy the condition (\ref{order}).
For $1\le r\le \sum_{\ell=1}^n|P_\ell|_y$, the $r$-th segment
$$\big{(}(r-1)(k+1)+1,0\big{)},\big{(}(r-1)(k+1)+2,0\big{)},\dots,\big{(}(r-1)(k+1)+1+w^{(i)}_j,0\big{)},\dots,\big{(}r(k+1),0\big{)}$$
is modified to
$$\big{(}(r-1)(k+1)+1,0\big{)},\big{(}(r-1)(k+1)+2,0\big{)},\dots,\big{(}(r-1)(k+1),1\big{)},\dots,\big{(}r(k+1),0\big{)}.$$
The only change is that the point
$\big{(}(r-1)(k+1)+1+w^{(i)}_j,0\big{)}$ is replaced by
$\big{(}(r-1)(k+1),1\big{)}$. To rearrange this segment into correct
order, we need to move the $(1+w^{(i)}_j)$-th point in front of the
first point, so the change of sign is $(-1)^{w^{(i)}_j}$. On the
other hand, rearranging $\{P^{(r_0)}_1,\dots,P^{(r_0)}_1\}$ to the
correct order incurs a sign change $\text{sgn}(\sigma)$. So the
overall sign change is
$$(-1)^{\sum_{i=1}^n\sum_{j=1}^{|P_i|_y}w^{(i)}_j}\cdot\text{sgn}(\sigma)
=(-1)^{\sum_{i=1}^n(\sigma(i)-1-|P_i|)}\cdot\text{sgn}(\sigma)=(-1)^k\text{sgn}(\sigma),$$
which coincides with the signs in the definition of $\varphi(D)$
(cf. Definition \ref{linearcompar}(a)).

Finally, note that $D^{(r_0)}_\textbf{w}$ (after rearranging it to
the correct order) is a special minimal staircase form defined in
Definition \ref{df:special minimal staircase form}. The partition
type of $D^{(r_0)}_\textbf{w}$ is $(w^{(i)}_j)_{i,j}$, which is
compatible with the definition (\ref{varphi_version2}) of
$\varphi(D)$. Thus we have finished the proof of Proposition
\ref{prop:varphi and Delta}.
\end{proof}

\section{The upper bound of $\dim M_{d_1,d_2}$}
\subsection{A characterization of the $q,t$-Catalan number.}

Recall the following conjecture we gave in \cite{LL}. We would like to point out that Mahir Can and Nick Loehr
 gave an equivalent conjecture in their unpublished work.
\begin{conj}\label{conj} Let $\Lambda_n$
be the set of integer sequences $\lambda_1\ge\cdots\ge
\lambda_{n-1}\ge\lambda_n= 0$ satisfying $\lambda_i\le n-i$ for all
$i\in[1,n]$. For any
$\lambda=(\lambda_1,\dots,\lambda_n)\in\Lambda_n$, let
$$a_i=n-i-\lambda_i,\quad b_i=\#\{j |\, i<j\le n, \lambda_i-\lambda_j + i-j \in \{0, 1\} \}$$
and define $D(\lambda)=\{(a_i, b_i)|1\le i\le n\}$.
 Then $\{\Delta(D(\lambda))\}_{\lambda\in\Lambda_n}$  generates the ideal $I$.
\end{conj}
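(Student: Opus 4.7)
The plan is to prove the conjecture by (i) verifying that $\{D(\lambda)\}_{\lambda\in\Lambda_n}$ reproduces the right bigraded count, and then (ii) establishing linear independence in $M$ via the map $\varphi$ of Section 3.

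For step (i) the sequences $\lambda\in\Lambda_n$ are in standard bijection with Dyck paths $\Pi(\lambda)$ from $(0,0)$ to $(n,n)$. One checks that $a_i=n-i-\lambda_i$ records the area contribution in row $i$, giving $\sum_i a_i=\mathrm{area}(\Pi(\lambda))$, and that $b_i$ matches the dinv contribution in Garsia--Haglund's convention; the latter requires a short combinatorial verification comparing the condition $\lambda_i-\lambda_j+i-j\in\{0,1\}$ with the two cases in the definition of $\mathrm{dinv}$. Together with the Garsia--Haglund formula for $C_n(q,t)$ and Haiman's identity (\ref{qtcathil}), this yields
$$|\{\lambda\in\Lambda_n\mid \mathrm{bideg}(\Delta(D(\lambda)))=(d_1,d_2)\}|=\dim M_{d_1,d_2}$$
for every $(d_1,d_2)$. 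Since $|\Lambda_n|=C_n=\dim M$, the set $\{\Delta(D(\lambda))\}$ generates $I$ precisely when its image is linearly independent in each $M_{d_1,d_2}$.

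Step (ii) is the substantive one. I would push the question through the induced map $\bar\varphi:M_{d_1,d_2}\to\mathbb{C}[\rho]$ and prove the stronger statement that the polynomials $\{\varphi(D(\lambda))\}$, as $\lambda$ ranges over those elements of $\Lambda_n$ of bidegree $(d_1,d_2)$, are linearly independent in $\mathbb{C}[\rho]$. Proposition \ref{prop:varphi and Delta} expands each $\varphi(D(\lambda))$ as $\sum_{\mu}a_\mu(\lambda)\rho_\mu$ over $\mu\in\Pi_{d_2,k}$ with $k=\binom{n}{2}-d_1-d_2$, while Lemma \ref{varphi_computation_lemma}(iv)(vi) factorises each such $\varphi(D(\lambda))$ as a product of small determinants attached to the blocks where $|P_i|=i-1$. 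The plan is then to exhibit an injection $\lambda\mapsto\mu(\lambda)$ into $\Pi_{d_2,k}$ and a compatible total order on the $\lambda$'s such that $\varphi(D(\lambda))$ contains $\rho_{\mu(\lambda)}$ as a leading term; Minors Permuting (Lemma \ref{lem:Minors Permuting Lemma}) can first be invoked to normalise each $\Delta(D(\lambda))$ to a special minimal staircase form of the correct bidegree before running the pivoting argument.

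The main obstacle is establishing this triangularity uniformly in $(d_1,d_2)$. In the strata covered by Theorem \ref{bases} the inequality $\dim M_{d_1,d_2}\le p(\delta,k)$ is an equality and the special minimal staircase forms of Definition \ref{df:special minimal staircase form} immediately supply the desired triangular change of basis. In the general regime, however, the inequality is strict, $\varphi(D(\lambda))$ typically spreads across many $\rho_\mu$ simultaneously, and the signed sums in Definition \ref{linearcompar}(a) offer no evident leading-term order. I would begin from the extremal edges, where Lemma \ref{varphi_computation_lemma}(v) pins $\varphi$ down up to a nonzero scalar, and attempt a double induction on $k$ and on the block structure of $\lambda$; it seems quite plausible, however, that a complete proof of the conjecture will require a new ingredient beyond the $\varphi$-framework developed here.
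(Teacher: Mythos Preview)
The statement you are addressing is Conjecture~\ref{conj}, and the paper does \emph{not} prove it: immediately after stating it the authors write ``We shall not prove the conjecture in this paper.'' So there is no proof in the paper to compare against. Your step~(i) is correct and is essentially the content of Proposition~\ref{prop:theta} and Corollary~\ref{cor:Dcatalan}: the map $\lambda\mapsto D(\lambda)$ identifies $\Lambda_n$ with $\D_n^{catalan}$, and the bidegree count matches $\dim M_{d_1,d_2}$ via Garsia--Haglund and Haiman's identity~(\ref{qtcathil}).

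Your step~(ii), however, is not a proof but a program, as you yourself concede in the final paragraph. Two concrete obstructions deserve emphasis. First, invoking Lemma~\ref{lem:Minors Permuting Lemma} to ``normalise each $\Delta(D(\lambda))$ to a special minimal staircase form'' does not work as stated: that lemma swaps adjacent blocks of a $D$ that already satisfies $|P_h|=h-1$ at the block boundaries, and a generic $D(\lambda)$ is not of this shape. The genuine reduction to special minimal staircase forms, Proposition~\ref{prop:varphi and Delta}, applies only after adjoining $N>(\sum|P_i|_y)(k+1)$ extra points, i.e.\ after passing to a much larger $n$; it therefore cannot by itself certify independence in the original $M_{d_1,d_2}$. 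Second, the linear independence of the polynomials $\varphi(D(\lambda))$ in $\C[\rho]$ is at least as strong as Conjecture~\ref{conj:bar phi injective}, and the paper shows that Conjecture~\ref{conj} already implies that injectivity. So your proposed route is circular in difficulty: the triangularity you seek is precisely the missing ingredient, no leading-term order on the $\varphi(D(\lambda))$ is known outside the range $k\le n-3$ handled by Theorem~\ref{main:k<=n-3}, and the conjecture remains open.
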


\begin{exmp}
  For $n=3$, $\Lambda_3$ consists of $(2,1,0), (1,1,0),
  (2,0,0),(1,0,0),(0,0,0)$, the corresponding $D(\lambda)$ are
$$
\begin{picture}(27,25)
\put(-3,17){$\bullet$}\put(-3,7){$\bullet$}\put(-3,-3){$\bullet$}
\boxs{0,0}\boxs{10,0}\boxs{0,10}\boxs{10,10}
\linethickness{1pt}\put(0,0){\line(0,1){27}}
\linethickness{1pt}\put(0,0){\line(1,0){25}}
\end{picture}
\quad\quad
\begin{picture}(27,25)
\put(7,-3){$\bullet$}\put(-3,7){$\bullet$}\put(-3,-3){$\bullet$}
\boxs{0,0}\boxs{10,0}\boxs{0,10}\boxs{10,10}
\linethickness{1pt}\put(0,0){\line(0,1){27}}
\linethickness{1pt}\put(0,0){\line(1,0){25}}
\end{picture}
\quad\quad
\begin{picture}(27,25)
\put(-3,17){$\bullet$}\put(7,-3){$\bullet$}\put(-3,-3){$\bullet$}
\boxs{0,0}\boxs{10,0}\boxs{0,10}\boxs{10,10}
\linethickness{1pt}\put(0,0){\line(0,1){27}}
\linethickness{1pt}\put(0,0){\line(1,0){25}}
\end{picture}
\quad\quad
\begin{picture}(27,25)
\put(7,7){$\bullet$}\put(7,-3){$\bullet$}\put(-3,-3){$\bullet$}
\boxs{0,0}\boxs{10,0}\boxs{0,10}\boxs{10,10}
\linethickness{1pt}\put(0,0){\line(0,1){27}}
\linethickness{1pt}\put(0,0){\line(1,0){25}}
\end{picture}
\quad\quad
\begin{picture}(27,25)
\put(17,-3){$\bullet$}\put(7,-3){$\bullet$}\put(-3,-3){$\bullet$}
\boxs{0,0}\boxs{10,0}\boxs{0,10}\boxs{10,10}
\linethickness{1pt}\put(0,0){\line(0,1){27}}
\linethickness{1pt}\put(0,0){\line(1,0){25}}
\end{picture}
$$
\end{exmp}

We shall not prove the conjecture in this paper. Instead, we give a
characterization of $D(\lambda)$ appeared in the conjecture. This
characterization will be used to provide an upper bound of $\dim
M_{d_1,d_2}$.

\begin{defn}\label{defn:Dcatalan}
Let $\D_n^{catalan}$ be the set consisting of $D\subset\N\times\N$,
where $D$ contains $n$ points  satisfying the following conditions.

(a) If $(p,0)\in D$ then $(i,0)\in D, \forall i\in [0,p]$.

(b) For any $p\in\N$,
$$\#\{j\,|\, (p+1,j)\in D\}+\#\{j\,|\,
(p,j)\in D\}\ge\max\{j\,|\, (p,j)\in D\}+1.$$ (If $\{j\,|\, (p,j)\in
D\}=\emptyset$, then we require that no point $(i,j)\in D$ satisfies
$i\ge p$.)
\end{defn}
\medskip

\begin{prop}\label{prop:theta} The map $\theta: \Lambda_n\to\D_n^{catalan}$ sending $\lambda$ to $D(\lambda)$
is one-to-one.
\end{prop}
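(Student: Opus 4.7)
The plan is to prove the proposition in two stages: first verify well-definedness, that $D(\lambda)\in\D_n^{catalan}$ for every $\lambda\in\Lambda_n$, and then establish injectivity by reading $a_1$ directly off $D=D(\lambda)$, peeling off the point $(a_1,b_1)$, and iterating.

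For well-definedness I would first check that the $n$ pairs $(a_i,b_i)$ are distinct: if $a_i=a_j$ with $i<j$ then $\lambda_i-\lambda_j+i-j=0$, so $j$ contributes to $b_i$, and any $k>j$ contributing to $b_j$ also contributes to $b_i$ (because $\lambda_i-\lambda_k+i-k=\lambda_j-\lambda_k+j-k$), forcing $b_i\ge b_j+1$. For conditions (a) and (b), the key structural fact is that $a_i=n-i-\lambda_i$ satisfies $a_{j+1}\ge a_j-1$ and $a_n=0$, so the $a$-sequence decreases by at most $1$ per step. If $(p,0)\in D(\lambda)$, i.e.\ $a_i=p$ and $b_i=0$ for some $i$, then no $j>i$ takes $a$-value in $\{p,p+1\}$; combined with $a_{i+1}\ge p-1$ this forces $a_{i+1}=p-1$, since the alternative $a_{i+1}\ge p+2$ would require $a$ to cross $\{p,p+1\}$ later on the way down to $0$. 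Then the last index $j^*$ with $a_{j^*}=p-1$ satisfies $b_{j^*}=0$, yielding $(p-1,0)\in D(\lambda)$, and induction on $p$ yields (a). For (b), letting $i^*_p$ be the smallest index with $a_i=p$ and writing $c_p=\#\{j:(p,j)\in D\}$, one has $\max\{j:(p,j)\in D\}=b_{i^*_p}=(c_p-1)+\#\{j>i^*_p:a_j=p+1\}\le(c_p-1)+c_{p+1}$; the empty-column clause follows again from the no-crossing principle.

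For injectivity I would induct on $n$, with the base case $n=1$ immediate. The crucial claim is that $a_1$ is characterized purely in terms of $D$: it is the smallest column index $p$ at which condition (b) holds with equality. Using the identity for $m_p:=\max\{j:(p,j)\in D\}$ above, equality at $p$ is equivalent to the statement $i^*_{p+1}>i^*_p$ (with the convention $i^*_{p+1}=\infty$ if column $p+1$ is empty). Equality clearly holds at $p=a_1$, since $i^*_{a_1}=1$ while $i^*_{a_1+1}\ge 2$. Once $a_1$ is identified, the top point of column $a_1$ in $D$ must be $(a_1,b_1)$; removing it and applying the index shift $a'_i=a_{i+1},\ b'_i=b_{i+1}$ produces $\theta(\lambda')$ for $\lambda'=(\lambda_2,\dots,\lambda_n)\in\Lambda_{n-1}$, so the inductive hypothesis pins down $\lambda'$ and hence $\lambda$.

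The main obstacle is to show that equality fails for every $p<a_1$. For such $p$, the trajectory $a_1,\dots,a_n$ starts at $a_1\ge p+1$ and ends at $0\le p$, so by the decrease-by-at-most-one property it attains every value in $[0,a_1]$, making column $p$ non-empty and (since $a_1\ne p$) forcing $i^*_p\ge 2$. The bound $a_{i^*_p-1}\le a_{i^*_p}+1=p+1$ together with the minimality of $i^*_p$ (so $a_{i^*_p-1}\ne p$) leaves the options $a_{i^*_p-1}\in\{0,\dots,p-1,p+1\}$. The case $a_{i^*_p-1}\le p-1$ is ruled out by letting $j_0$ be the smallest index with $a_{j_0}\le p-1$: the same no-crossing argument gives $a_{j_0-1}=p$ with $j_0-1<i^*_p$, contradicting the minimality of $i^*_p$. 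Hence $a_{i^*_p-1}=p+1$, giving $i^*_{p+1}\le i^*_p-1<i^*_p$ and negating equality at $p$. This uniquely identifies $a_1$, closes the induction, and (since the same reading-off procedure applied to an arbitrary $D\in\D_n^{catalan}$ in fact produces a valid $\lambda$) also gives the inverse, so $\theta$ is a bijection.
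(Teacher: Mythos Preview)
Your proof is correct and follows essentially the same route as the paper: verify well-definedness via the key fact that $a_{i+1}\ge a_i-1$ (so the $a$-sequence cannot skip values on its way down to $0$), and then recover $\lambda$ from $D$ by identifying $a_1$ as the smallest column where condition (b) holds with equality, peeling off the top point of that column, and recursing on $\lambda'=(\lambda_2,\dots,\lambda_n)$. Your treatment is in fact more explicit than the paper's on two points: you spell out the ``no-crossing'' argument that rules out $a_{i+1}\ge p+2$ in the proof of (a), and you actually justify (via the equivalence ``equality at $p$ $\Leftrightarrow$ $i^*_{p+1}>i^*_p$'') that the minimal column with equality recovers $a_1$, whereas the paper simply defines $\theta^{-1}$ by this rule and declares the verification that $\theta$ and $\theta^{-1}$ are mutually inverse to be routine. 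Your final parenthetical about surjectivity is as sketchy as the paper's, but the proposition as stated only asks for injectivity.
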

\begin{proof}
  We first show that $D(\lambda)$ is in $\D_n^{catalan}$, i.e., it
  satisfies conditions (a)(b) of Definition \ref{defn:Dcatalan}.

  By the definition of $D(\lambda)$, suppose $a_i=a_{i'}$ for some $i,i'\in[1,n]$, then
  $i\le i'$ if and only if $b_i\ge b_{i'}$. Indeed, suppose $i\le
  i'$. Since $a_i=a_{i'}$ implies
  $(\lambda_i+i)=(\lambda_{i'}+i')$, we have
  $$\{j |\, i<j\le n, (\lambda_i+i)-(\lambda_j + j) \in \{0, 1\}
  \}\supseteq \{j |\, i'<j\le n, (\lambda_{i'}+i')-(\lambda_j+j) \in \{0, 1\}
  \},$$
  hence $b_i\ge b_{i'}$.

  For (a), suppose $(a_\ell,b_\ell)=(p,0)\in D(\lambda)$ and $(p-1,0)\notin D(\lambda)$. Since
  $$a_i-a_{i+1}=(n-i-\lambda_i)-(n-i-1-\lambda_{i+1})=1-(\lambda_i-\lambda_{i+1})\le 1, \quad \forall i\in [1,n-1]$$
  and $a_n=0$, there exists $i\in [\ell+1,n]$ such that $a_i=p-1$.
  Suppose $i_0$ is maximal among all such $i$. Since $(a_\ell,b_\ell)=(p,0)$, we have $a_i<p$ for all $i>\ell$.
  Therefore
  $$b_{i_0}=\#\{j |\, i_0<j\le n,\, a_j-a_{i_0} \in \{0, 1\}
  \}=\#\{j |\, i_0<j\le n,\, a_j\in \{p-1, p\}
  \}=0,$$
  $(a_{i_0},b_{i_0})=(p-1,0)$,
  which contradicts our assumption that $(p-1,0)\notin D(\lambda)$.

  For (b), if $\{j\,|\, (p,j)\in
D\}=\emptyset$, then since $a_i-a_{i+1}\le 1\, \forall i$, there is
no point in $D$ whose $x$-coordinate is greater than or equal to
$p$. Now assume $\{j\,|\, (p,j)\in D\}\neq\emptyset$, define
$q=\max\{j\,|\, (p,j)\in D\}$, and $(a_\ell,b_\ell)=(p,q)\in D$. By
the definition of $b_\ell$,
$$q=b_\ell=\#\{j|\, \ell<j\le n, \, a_j-a_\ell\in\{0,1\}\}=\#\{j|\, \ell<j\le n, \, a_j=p \hbox{ or } p+1\},$$
therefore,
$$\#\{j\,|\, (p+1,j)\in D\}+\#\{j\,|\,
(p,j)\in D\}\ge q+1.$$ So $D(\lambda)$ is in $\D_n^{catalan}$.

To show that $\theta:D\mapsto D(\lambda)$ is a bijection, it
suffices to construct a map $\theta^{-1}$ sending $D(\lambda)$ back
to $\lambda$. We give an inductive construction on $n$. Let $p\in\N$
be the minimal integer such that
$$\#\{j\,|\, (p+1,j)\in D\}+\#\{j\,|\,(p,j)\in D\}\le \max\{j\,|\, (p,j)\in D\}+1.$$
Let $q=\max\{j\,|\, (p,j)\in D\}$ and define $(a_1,b_1)=(p,q)\in D$.
Now $D':=D\setminus \{(a_1,b_1)\}$ has $(n-1)$ points and we can
check that it is in $\D_{n-1}^{catalan}$. By induction we have
$\theta^{-1}(D')=(\lambda'_1,\lambda'_2,\cdots,\lambda'_{n-1})$.
Then we define
$$\theta^{-1}(D)=(n-1-p,\lambda'_1,\lambda'_2,\dots,\lambda'_{n-1}).$$
To check that it is in $\Lambda_n$, we need to show
$n-1-p\ge\lambda'_1$, i.e., $p\le (n-1)-\lambda'_1=a'_1+1$, where
$a'_1$ is the minimal integer that
$$\#\{j\,|\, (a'_1+1,j)\in D'\}+\#\{j\,|\,(a'_1,j)\in D'\}\le \max\{j\,|\, (a'_1,j)\in D'\}+1.$$
But $D$ and $D'$ coincide on column $0,1,\dots,p-1$, therefore
$a'_1\ge p-1$.

To check that $\theta $ and $\theta^{-1}$ are inverse to each other
is routine and we shall skip.
\end{proof}

\begin{remark}
The above proposition is also discovered independently by
Alexander Woo \cite{W}.
\end{remark}

\begin{cor}\label{cor:Dcatalan}
  The dimension of $M_{d_1,d_2}$, i.e., the coefficient of $q^{d_1}t^{d_2}$ in the $q,t$-Catalan
  number $C_n(q,t)$, is equal to the number of $D\in\D_n^{catalan}$
  such that the $x$-degree (resp.  $y$-degree) of
  $D$ is $d_1$ (resp. $d_2$).
\end{cor}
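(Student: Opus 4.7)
The plan is to construct a bi-statistic-preserving chain of bijections $\D_n^{catalan}\leftrightarrow\Lambda_n\leftrightarrow\mathcal{D}_n$ and then invoke the Garsia--Haglund formula for $C_n(q,t)$ together with Haiman's identity~(\ref{qtcathil}). By Proposition~\ref{prop:theta}, $\theta\colon\Lambda_n\to\D_n^{catalan}$, $\lambda\mapsto D(\lambda)$, is a bijection, so it suffices to produce a bijection $\beta\colon\Lambda_n\to\mathcal{D}_n$ under which the $x$-degree (respectively $y$-degree) of $D(\lambda)$ matches the area (respectively dinv) statistic of $\beta(\lambda)$.

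I would define $\beta(\lambda)$ to be the Dyck path whose $i$-th north step, counted from the top, has $x$-coordinate $\lambda_i$; equivalently, its row statistic indexed from the top row downward is $a_i(\Pi):=n-i-\lambda_i$. The inequalities $0=\lambda_n\le\cdots\le\lambda_1$ and $\lambda_i\le n-i$ translate precisely into $a_n=0$, $a_i\ge 0$, and $a_{i+1}\ge a_i-1$, which together characterize area sequences of Dyck paths, so $\beta$ is well-defined and bijective.

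Matching the statistics is a direct computation. For the $x$-degree,
\[
x\text{-deg}(D(\lambda))=\sum_{i=1}^n(n-i-\lambda_i)=\sum_{i=1}^n a_i(\beta(\lambda))=\text{area}(\beta(\lambda)).
\]
For the $y$-degree, observe that for any $i<j$,
\[
a_j(\beta(\lambda))-a_i(\beta(\lambda))=(n-j-\lambda_j)-(n-i-\lambda_i)=\lambda_i-\lambda_j+i-j,
\]
so the condition $\lambda_i-\lambda_j+i-j\in\{0,1\}$ defining $b_i$ is equivalent to $a_i=a_j$ or $a_i+1=a_j$. Summing over all such pairs gives $y\text{-deg}(D(\lambda))=\sum_i b_i=\text{dinv}(\beta(\lambda))$.

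Combining this with Garsia--Haglund's formula $C_n(q,t)=\sum_{\Pi\in\mathcal{D}_n}q^{\text{area}(\Pi)}t^{\text{dinv}(\Pi)}$ yields $\sum_{D\in\D_n^{catalan}}q^{d_1(D)}t^{d_2(D)}=C_n(q,t)$, so the coefficient of $q^{d_1}t^{d_2}$ in $C_n(q,t)$ is exactly the desired count. That this coefficient agrees with $\dim M_{d_1,d_2}$ then follows from Haiman's identity~(\ref{qtcathil}) combined with the $q,t$-symmetry $C_n(q,t)=C_n(t,q)$. There is no substantive obstacle in this plan; the only delicate point is fixing the top-to-bottom indexing convention for the area sequence so that the algebraic identifications line up.
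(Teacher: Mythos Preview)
Your proof is correct and follows essentially the same approach as the paper's: both combine Proposition~\ref{prop:theta} with the Garsia--Haglund formula and Haiman's identity~(\ref{qtcathil}). The only difference is presentational---the paper packages your explicit bijection $\beta$ and the area/dinv matching into a direct citation of Garsia--Haglund as $C_n(q,t)=\sum_{\lambda\in\Lambda_n}q^{\sum a_i}t^{\sum b_i}$, and leaves the $q,t$-symmetry implicit, whereas you spell both out.
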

\begin{proof}
  It is an immediate consequence of Proposition \ref{prop:theta}, by
  using Garsia and Haglund's description of $q,t$-Catalan number (\cite{GH:pos},
  \cite{GH:proof}), which asserts, in notations of Conjecture
  \ref{conj}, that
$$C_n(q,t)=\sum_{\lambda\in\Lambda}q^{\sum a_i}t^{\sum b_i}.$$
\end{proof}

\subsection{The upper bound of $\dim M_{d_1,d_2}$.}
In order to compare $M_{d_1,d_2}$ for different $n$, we use $M^{(n)}_{d_1,d_2}$ to specify which $n$ we are considering.
\begin{prop}\label{prop:upper bound} Let $\ell,n\in\N^+$.
Then we have
  $$\dim M^{(n)}_{d_1,d_2}\le \dim M^{(n+\ell)}_{d_1+{\ell\choose 2}+n\ell,\, d_2}.$$
In particular, let $k={n\choose 2}-d_1-d_2$, then  $$\dim
M_{d_1,d_2}\le p(d_2,k).$$
\end{prop}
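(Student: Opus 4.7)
The strategy is to pass to the combinatorial side via Corollary \ref{cor:Dcatalan}, which identifies $\dim M^{(n)}_{d_1,d_2}$ with the number of $D \in \D_n^{catalan}$ of bi-degree $(d_1, d_2)$. The main inequality then becomes an injection between such sets, and the ``in particular'' bound $\dim M_{d_1,d_2} \le p(d_2,k)$ follows by taking $\ell$ large and bounding the resulting count by partitions.

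For the main inequality, I would define
\[
\iota(D) := \{(0,0), (1,0), \ldots, (\ell-1, 0)\} \;\cup\; \{(a+\ell,\, b) : (a, b) \in D\},
\]
which gives a map from bi-degree $(d_1, d_2)$ elements of $\D_n^{catalan}$ to bi-degree $(d_1 + \binom{\ell}{2} + n\ell, d_2)$ elements of $\D_{n+\ell}^{catalan}$. The bi-degree count is routine: prepending contributes $\binom{\ell}{2}$ and shifting contributes $n\ell$ to the $x$-degree, while the $y$-degree is unchanged. I would verify Definition \ref{defn:Dcatalan}(a) by noting that the prepended points and the shifted bottom row of $D$ form a single initial segment. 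For Definition \ref{defn:Dcatalan}(b), a case split suffices: on columns $0, \ldots, \ell - 2$ the maximum $y$-coordinate is $0$ and the condition is trivial; at column $\ell - 1$ the condition holds because $(0,0) \in D$ (a routine consequence of Definition \ref{defn:Dcatalan}(a) and (b) applied inductively) guarantees that column $\ell$ of $\iota(D)$ is nonempty; and on columns $\ge \ell$ the condition translates verbatim to condition (b) for $D$. The map $\iota$ is obviously injective, so the first inequality follows.

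For the bound $\dim M_{d_1,d_2} \le p(d_2, k)$, apply the first inequality with $\ell$ very large and set $N = n + \ell$ and $D_1 = d_1 + \binom{\ell}{2} + n\ell$. The identity $\binom{N}{2} = \binom{n}{2} + n\ell + \binom{\ell}{2}$ gives $\binom{N}{2} - D_1 - d_2 = k$, so the deficit is invariant. It suffices to show that $|\{D \in \D_N^{catalan} : \text{bi-deg}(D) = (D_1, d_2)\}| \le p(d_2, k)$ for $N$ large. Every such $D$ decomposes as a bottom row $\{(0,0), \ldots, (p-1,0)\}$ of length $p = N - m$ together with $m \le d_2$ upper points $(c_1, e_1), \ldots, (c_m, e_m)$ with $\sum e_i = d_2$; using the identity $\binom{N}{2} - \binom{N-m}{2} = mN - \binom{m+1}{2}$ one obtains $\sum c_i = mN - \binom{m+1}{2} - d_2 - k$.

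I would then construct a map $D \mapsto \mu(D) \in \Pi_{d_2, k}$ by encoding the positional deviations of the upper points from the rightmost admissible configuration: each upper point $(c_i, e_i)$ contributes $e_i$ parts to $\mu(D)$ whose sizes record the deficit of $c_i$ from the maximum achievable position, so that the total number of parts is at most $\sum e_i = d_2$ and the displayed identity for $\sum c_i$ forces the total sum to be $k$. The main obstacle is injectivity of $D \mapsto \mu(D)$: for $N$ sufficiently large (depending only on $d_2 + k$), Definition \ref{defn:Dcatalan}(b) imposes no genuine constraint on the upper configuration beyond the trivial bounds $c_i \in [0, N-1]$, so the partition $\mu(D)$ uniquely recovers $m$ and the $(c_i, e_i)$'s, and hence $D$ itself.
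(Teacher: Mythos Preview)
Your proof of the first inequality via the injection $\iota$ into $\D_{n+\ell}^{catalan}$ is correct and is exactly what the paper does.

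For the ``in particular'' bound, however, you diverge from the paper and your argument has a genuine gap. The paper does \emph{not} bound $|\D_N^{catalan}|$ combinatorially; instead it invokes Proposition~\ref{prop:varphi and Delta}: for $\ell$ large every $\Delta(D^{(n+\ell)})$ is congruent modulo lower degrees to a linear combination of the $p(d_2,k)$ special minimal staircase forms $\Delta(F_\mu)$, and this algebraic fact yields $\dim M^{(n+\ell)}_{D_1,d_2}\le p(d_2,k)$.

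Your alternative combinatorial route breaks down at two points. First, the assertion that ``for $N$ sufficiently large, Definition~\ref{defn:Dcatalan}(b) imposes no genuine constraint on the upper configuration beyond the trivial bounds $c_i\in[0,N-1]$'' is false. Condition~(b) is local: an upper point $(c,e)$ forces at least $e+1$ points in columns $c$ and $c+1$, and since there are at most two bottom-row points and at most $d_2$ upper points available there, this is a hard constraint that does not relax as $N$ grows. For instance, with $d_2=5$ a single upper point $(c,5)$ (so $m=1$) is impossible for every $N$, and a pair $(c_1,1),(c_2,4)$ is likewise impossible. So the admissible upper configurations are genuinely restricted in a way that your description ignores. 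Second, and relatedly, the map $D\mapsto\mu(D)$ is not actually defined: saying that $(c_i,e_i)$ ``contributes $e_i$ parts whose sizes record the deficit of $c_i$ from the maximum achievable position'' does not specify those $e_i$ parts (a single deficit is one number, not $e_i$ numbers), and without a precise rule there is nothing to check injectivity against. For example, when $d_2=2$, $m=1$, $e_1=2$, the single upper point has one deficit, yet you claim two parts; whatever you intend must still land in $\Pi_{d_2,k}$ and be injective, and that requires a concrete construction you have not given.

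A purely combinatorial injection into $\Pi_{d_2,k}$ may well exist (indeed the two sets have the same cardinality once the whole paper is established), but producing one requires real work beyond what you have sketched. The paper sidesteps this by using the $\varphi$ machinery of \S3.
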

\begin{proof}
For any $D^{(n)}\in\D_n^{catalan}$ whose bi-degree is
$(d_1,d_2)$, we define
$$D^{(n+\ell)}=\{(0,0),(1,0),\dots,(\ell-1,0)\}\cup \big{(}D^{(n)}+(\ell,0)\big{)},$$
where $D^{(n)}+(\ell,0)$ means translating the set $D^{(n)}$ by the vector
$(\ell,0)$. It is easy to verify that $D^{(n+\ell)}\in\D_{n+\ell}^{catalan}$
has bi-degree $(d_1+{\ell\choose 2}+n\ell,d_2)$. By Corollary
\ref{cor:Dcatalan} we have proved the first assertion.

For any $D^{(n)}\in\D_n$ of bi-degree $(d_1,d_2)$, by taking sufficiently large $\ell$ and applying Proposition
\ref{prop:varphi and Delta}, we get
$$
\Delta(D^{(n+\ell)}) \equiv \sum_{\mu \in \Pi_{d_2,k}} a_\mu\cdot
\Delta(F_\mu)\quad \textrm{\rm (modulo lower degrees)},$$ where
$F_\mu\in \D_{n+\ell}$ are special minimal staircase forms of bi-degree
$(d_1+{\ell\choose 2}+n\ell,d_2)$ and of partition type $\mu$. This implies
$$\dim M^{(n+\ell)}_{d_1+{\ell\choose 2}+n\ell,\, d_2}\le p(d_2,k),$$ therefore
$$\dim M^{(n)}_{d_1,\, d_2}\le\dim M^{(n+\ell)}_{d_1+{\ell\choose 2}+n\ell,\, d_2}\le p(d_2,k).$$
\end{proof}

\section{The lower bound of $\dim M_{d_1,d_2}$}

\subsection{A homogeneous term order, leading terms and leading monomials.}\label{total_order3}

\begin{defn}\label{total_order_par_k=n-3} (a) Let $k\in \N^+$ and denote by $\Q[\rho]_k\subset\Q[\rho]$ the vector space spanned by monomials
$\rho_\nu=\prod\rho_{\nu_i}$ for all sequences of positive integers
$\nu=(\nu_1\leq \nu_2 \leq....\leq\nu_m)$ satisfying $\sum\nu_i=k$.
Equivalently, $\Q[\rho]_k$ is the set of weighted homogeneous
polynomials of weight $k$ by assigning the weight of $\rho_i$ to be
$i$, $\forall i\in\N^+$.

(b) For any $\nu,\mu\in\Q[\rho]_k$, denoted by $\nu=(\nu_1\leq \nu_2
\leq....\leq\nu_m)$ and $\mu=(\mu_1\leq \mu_2 \leq....\leq \mu_n)$,
we define $\rho_\nu<\rho_\mu$ if there is a positive integer
$j\le\min(m,n)$ such that $\nu_i=\mu_i \text{ for }1\leq i\leq j-1$
and ${\nu}_j<{\mu}_j$. This clearly defines a total order on
$\Q[\rho]_k$, $\forall k\in\N^+$.

(c) For $k\in\N^+$ and a nonzero polynomial $f=\sum a_\nu\rho_\nu
\in\Q[\rho]_k$ where $a_\nu\in\Q$, we define the leading monomial of
$f$ to be the
$$\LM(f):=\max\{\rho_\nu| a_\nu\neq 0\},$$
and define the leading term of $f$ to be $\LT(f):=a_\nu\rho_\nu$
where $\rho_\nu=\LM(f)$. For $c\in \Q\setminus\{0\}$, define
$\LT(c)=1$ and $\LM(c)=c$.\qed
\end{defn}
\begin{exmp}
  Let $f=2\rho_1\rho_2\rho_7-5\rho_4\rho_6$. Since $\rho_1\rho_2\rho_7<\rho_4\rho_6$, we have
  $$\LM(f)=\rho_4\rho_6,\quad \LT(f)=-5\rho_4\rho_6.$$
\end{exmp}

\begin{lem}\label{product of lowest terms}
(a) The total order of the monomials in $\Q[\rho]_k$ defined in
Definition \ref{total_order_par_k=n-3} is preserved by
multiplication: let $\rho_\mu,\rho_{\mu'},\rho_\nu$ be monomials in
$\Q[\rho]_k$, then $\rho_\mu\le\rho_{\mu'}$ if and only if
$\rho_\mu\rho_{\nu} \le \rho_{\mu'}\rho_\nu$.

(b) Let $\mu,\nu$ be monomials in $\Q[\rho]_k$ and $\mu',\nu'$ be
monomials in $\Q[\rho]_{k'}$ such that $\rho_\mu\le\rho_{\nu}$ and
$\rho_{\mu'}\le\rho_{\nu'}$. Then $\rho_\mu\rho_{\mu'} \le
\rho_{\nu}\rho_{\nu'}$.

(c) for $f\in\Q[\rho]_k$, $g\in\Q[\rho]_{k'}$ ($k,k'\in\N^+$), we
have $$\LM(fg)=\LM(f)\LM(g),\quad \LT(fg)=\LT(f)\LT(g).$$
\end{lem}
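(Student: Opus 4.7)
The plan is to prove (a) by a direct multiset merge argument, then derive (b) by transitivity, and finally obtain (c) by a uniqueness argument for the maximal product of monomials.

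\textbf{Step 1: interpretation.} I would identify each $\rho_\mu$ with its sorted non-decreasing tuple $(\mu_1\le\cdots\le\mu_m)$ viewed as a multiset. Multiplication $\rho_\mu\rho_\nu$ then corresponds to multiset union, whose canonical sorted form is the merge of $\mu$ and $\nu$; the order of Definition~\ref{total_order_par_k=n-3}(b) becomes the usual lex order on these sorted tuples.

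\textbf{Step 2: part (a).} Given $\rho_\mu<\rho_{\mu'}$ with first disagreement at index $j$, set $v:=\mu_j<\mu'_j$. Let $m$ be the number of entries of $\mu$ (equivalently $\mu'$) strictly less than $v$, $n$ the entries of $\nu$ strictly less than $v$, and $p$ the entries of $\nu$ equal to $v$. By direct counting, the first $m+n$ entries of $\mu\cup\nu$ and $\mu'\cup\nu$ agree (they are precisely the entries $<v$, which are identical in $\mu$ and $\mu'$), and the next $j-1-m+p$ entries of both merges all equal $v$. At position $n+j+p$, however, $\mu\cup\nu$ still contains an unused copy of $v$ (coming from $\mu_j=v$, which has been replaced in $\mu'$ by $\mu'_j>v$), whereas $\mu'\cup\nu$ has exhausted its entries equal to $v$ and must move on to a strictly larger value. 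This yields $\rho_\mu\rho_\nu<\rho_{\mu'}\rho_\nu$. The reverse implication follows by contrapositive: $\mu=\mu'$ gives equality, and $\mu>\mu'$ gives the opposite strict inequality by the forward direction.

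\textbf{Step 3: parts (b) and (c).} For (b), apply (a) twice and chain by transitivity: $\rho_\mu\rho_{\mu'}\le\rho_\nu\rho_{\mu'}\le\rho_\nu\rho_{\nu'}$. For (c), expand $fg=\sum c_\alpha d_\beta\,\rho_\alpha\rho_\beta$ where $\rho_\alpha$ runs over monomials of $f$ and $\rho_\beta$ over monomials of $g$. By (b), every such $\rho_\alpha\rho_\beta\le\LM(f)\LM(g)$. If $\rho_\alpha<\LM(f)$, then (a) gives $\rho_\alpha\LM(g)<\LM(f)\LM(g)$, and a further application of (a) (using $\rho_\beta\le\LM(g)$) yields $\rho_\alpha\rho_\beta<\LM(f)\LM(g)$; symmetrically if $\rho_\beta<\LM(g)$. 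Thus $\LM(f)\LM(g)$ is the unique maximum, attained only by the pair $(\LM(f),\LM(g))$. Hence its coefficient in $fg$ is the product of leading coefficients, giving $\LM(fg)=\LM(f)\LM(g)$ and $\LT(fg)=\LT(f)\LT(g)$.

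\textbf{Main obstacle.} The only genuinely combinatorial point is the index tracking in Step~2: a strict discrepancy at position $j$ in the original sequences is displaced to position $n+j+p$ in the merged sequences, and one must verify carefully that the merges really do agree before that position and differ exactly there. Once this is set up, (b) and (c) are formal consequences.
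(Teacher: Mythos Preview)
Your proof is correct and follows essentially the same approach as the paper's. The only difference is one of presentation in part (a): the paper reduces to the special case where the first disagreement occurs at index $j=1$ (i.e.\ $\mu_1<\mu'_1$) and then identifies the new disagreement position as the first $\ell$ with $\nu_\ell>\mu_1$, whereas you handle a general index $j$ directly by tracking the merge and locating the discrepancy at position $n+j+p$. Your treatment of (c) is also more explicit than the paper's, which simply cites (b); you spell out why the maximum product is attained by a unique pair, preventing cancellation.
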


\begin{proof}(a) Denote $\mu=(\mu_1\le\mu_2\le\cdots)$,
$\mu'=(\mu'_1\le\mu'_2\le\cdots)$. To show the ``only if'' part,
suppose $\mu\le\mu'$. It suffices to consider the case when we have
a strict inequality $\mu_1<\mu'_1$. Let $\rho_\xi=\rho_\mu\rho_\nu$
and $\rho_{\xi'}=\rho_{\mu'}\rho_\nu$. Let $\ell$ be the smallest
integer satisfying $\nu_\ell> \mu_1$. Then
$$\aligned &\xi=(\nu_1,\dots,\nu_{\ell-1},\mu_1,\dots),\\
&\xi'=(\nu_1,\dots,\nu_{\ell-1},\min(\mu'_1,\nu_\ell),\dots).\endaligned$$
Since $\mu_1<\min(\mu'_1,\nu_\ell)$, we have $\rho_\xi<\rho_{\xi'}$
by definition and therefore $\rho_\mu\rho_{\nu} <
\rho_{\mu'}\rho_\nu$. On the other hand, the ``if'' part immediately
follows from the ``only if'' part.

(b) Applying (a) twice, we have $\rho_\mu\rho_\nu\ge
\rho_\mu\rho_{\nu'}\ge \rho_{\mu'}\rho_{\nu'}$.

(c) It is an immediate consequence of (b).
\end{proof}

\subsection{The theorems on the lower bound of $\dim M_{d_1,d_2}$}

\begin{thm}\label{main:k<=n-4}
Let $n,k\in\N$, $k\le n-4$. Let $d_1,d_2\in\N^+$, $d_1+d_2={n\choose
2}-k$, $d_2\leq d_1$. Then for each $\nu \in \Pi_{d_2,k}$, there
exists a $D_\nu\in \D_n$, such that $\Delta(D_\nu)$ has bi-degree
$(d_1,d_2)$, and $\LM(\varphi(D_\nu))=\rho_\nu$.
\end{thm}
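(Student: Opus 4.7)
I would construct $D_\nu \in \D_n$ explicitly as a disjoint union of blocks (in the sense of Lemma \ref{varphi_computation_lemma}(iv)), engineered so that $\varphi(D_\nu)=\prod_r \varphi(B_r)$ has leading monomial equal to $\rho_\nu$. The atomic building blocks are: (i) trivial single-point blocks, which contribute $\varphi=1$; (ii) lift blocks $L_s=\{(-1,1),(0,0),(1,0),\ldots,(s-1,0)\}$, which contribute $\varphi=\rho_s$ by Lemma \ref{varphi_computation_lemma}(vi), together with their $(-t,t)$-shifts, which preserve $\varphi$ by part (iii); and (iii) anti-diagonal blocks $A_s=\{(-s+1,s-1),\ldots,(-1,1),(0,0)\}$, which contribute $\varphi=\rho_1^{\binom{s}{2}}$ by part (v). Each of these block types has $\varphi$ equal to a single monomial.

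For $\nu=(\nu_1\leq\cdots\leq\nu_m)\in\Pi_{d_2,k}$, the natural candidate is to use one lift block $L_{\nu_i}$ per positive part of $\nu$, plus $n-k-m$ trivial blocks. In the roomy case $k+d_2+1\leq n$, this assembly is realized by a special minimal staircase form (Definition \ref{df:special minimal staircase form}) of partition type $\nu$ and bi-degree $(d_1,d_2)$, so Lemma \ref{varphi_computation_lemma}(iv) gives $\varphi(D_\nu)=\rho_\nu$ as a single monomial and the leading-monomial condition is automatic. In the tight case $k+d_2+1>n$, I would tune the total $y$-degree up to $d_2$ using $(-t_i,t_i)$-shifts of the lift blocks (permitted by non-negativity of coordinates up to $t_i\leq (\text{start position})-2$) together with a free choice of $y$-coordinate in $[0,(\text{position})-1]$ for each trivial block. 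When even the $m$ size-$2$ lift blocks cannot fit (most extremely when $\nu=(1^k)$ with $k$ close to $n/2$), I would substitute one anti-diagonal $A_s$ for $\binom{s}{2}$ consecutive copies of $L_1$, saving $s(s-2)$ points while contributing the same $\rho_1^{\binom{s}{2}}$ to the product.

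In every case the product of block $\varphi$-values is a single monomial equal to $\rho_\nu$ (the $\rho_1$-powers contributed by anti-diagonal substitutions fuse with the $1$-parts of $\nu$), so by Lemma \ref{product of lowest terms}(c) we obtain $\LM(\varphi(D_\nu))=\rho_\nu$. The main obstacle is verifying the feasibility of the tight case: one has to check that the shift parameters and the trivial-block $y$-coordinates jointly cover the whole interval $d_2\in[m,\lfloor(\binom{n}{2}-k)/2\rfloor]$, and that the anti-diagonal substitutions keep the total point count at exactly $n$. The hypothesis $k\leq n-4$ enters essentially here: it supplies at least four ``extra'' positions beyond the $k$ demanded by the partition, which is enough freedom in both the point budget and the parameter choices to realize some $D_\nu$ of the correct bi-degree for every $\nu\in\Pi_{d_2,k}$.
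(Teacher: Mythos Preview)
Your block-assembly strategy is the right framework, and your lift blocks $L_s$ and anti-diagonal blocks $A_s$ are precisely the ingredients the paper uses for single parts and for clusters of $1$'s. But there is a genuine gap: anti-diagonal substitution only compresses $\rho_1$-factors. Consider $\nu=(2,2,\ldots,2)\in\Pi_{d_2,k}$ with $k/2$ parts, for instance $n=100$, $k=96=n-4$, $d_2=48$. Each factor $\rho_2$ costs a $3$-point block $L_2$; forty-eight of them plus the base point already total $145>n$ points. Nothing in your toolkit produces a $\rho_v$ with $v\ge 2$ at a cost below $v+1$ points per factor, so the construction simply does not fit into $\D_n$. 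The hypothesis $k\le n-4$ supplies only four spare points, not the roughly $k/2$ you would need here.

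The paper closes this gap with a two-part block (Lemma \ref{partitions with two parts}): a set $E_{(v,w)}\in\D'_{w+2}$ with $|P_1|=|P_2|=0$, $|P_i|=i-2$ for $3\le i\le w-v+3$, and $|P_i|=i-3$ thereafter, whose $\varphi$ has leading monomial $\rho_v\rho_w$. The key is that this absorbs \emph{two} parts $2\le v\le w$ into only $w+2\le v+w$ points. Combined with the anti-diagonal $A_3$ for triples of $1$'s, one obtains a systematic decomposition of any $\nu$ (Definition \ref{seq_of_substrings}) whose total point count is at most $k+4\le n$, which is exactly where $k\le n-4$ enters. Note that $\varphi(E_{(v,w)})$ is no longer a single monomial, so the argument genuinely requires the leading-monomial multiplicativity of Lemma \ref{product of lowest terms}(c) rather than a straight product identity.
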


\begin{thm}\label{main:k<=n-3}
Let $n,k\in\N$, $k\le n-3$. Let $d_1,d_2\in\N^+$, $d_1+d_2={n\choose
2}-k$, $d_2\leq d_1$. Then  for each $\nu \in \Pi_{d_2,k}$, there
exists an alternating polynomial $f_\nu$ of bi-degree $(d_1,d_2)$,
either of the form $\Delta(D)$ or of the form $\Delta(D)-\Delta(D')$
for some $D,D'\in\D_n$, such that $\LM(\varphi(f_\nu))=\rho_\nu$.
Moreover, $\emph{dim } M_{d_1, d_2}=p(d_2, k)$, the partition number
of $k$ into at most $d_2$ parts.
\end{thm}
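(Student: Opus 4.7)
The plan is two-fold: derive the dimension formula by matching the upper bound $\dim M_{d_1,d_2}\le p(d_2,k)$ of Proposition \ref{prop:upper bound} with a lower bound of the same size, and simultaneously produce the distinguished alternants $f_\nu$ with prescribed leading monomial. The key linear-algebraic device is the induced $\C$-linear map $\bar\varphi\colon M_{d_1,d_2}\to\C[\rho]_k$ from Definition \ref{linearcompar}(c), combined with the term order of Definition \ref{total_order_par_k=n-3} and Lemma \ref{product of lowest terms}: once we exhibit $p(d_2,k)$ alternants $f_\nu$ indexed by $\nu\in\Pi_{d_2,k}$ with $\LM(\varphi(f_\nu))=\rho_\nu$, the pairwise distinctness of these leading monomials forces $\{\bar\varphi(f_\nu)\}_\nu$ to be linearly independent in $\C[\rho]_k$ by a standard echelon argument, hence $\{f_\nu\}_\nu$ to be linearly independent in $M_{d_1,d_2}$, giving $\dim M_{d_1,d_2}\ge p(d_2,k)$ and matching the upper bound.

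To produce such $f_\nu$ I would split on whether $k\le n-4$ or $k=n-3$. For the range $k\le n-4$, Theorem \ref{main:k<=n-4} already supplies $D_\nu\in\D_n$ of bi-degree $(d_1,d_2)$ with $\LM(\varphi(D_\nu))=\rho_\nu$, so one simply sets $f_\nu:=\Delta(D_\nu)$. The remaining boundary case $k=n-3$ is precisely where the shape of the statement becomes essential: for certain partitions $\nu\in\Pi_{d_2,k}$ no single $D\in\D_n$ yields $\LM(\varphi(D))=\rho_\nu$, and a correcting difference $f_\nu=\Delta(D)-\Delta(D')$ is required. I would proceed by choosing a natural candidate $D$, typically a (near-)special minimal staircase form of partition type $\nu$ assembled block by block via Definition \ref{df:special minimal staircase form}, compute $\varphi(D)$ explicitly using the block/translation/factorization identities of Lemma \ref{varphi_computation_lemma}(ii)(iii)(iv) and the Vandermonde evaluation (v), and determine whether $\LT(\varphi(D))=c\,\rho_\nu$ or instead $\LT(\varphi(D))=c\,\rho_\mu$ for some strictly larger monomial $\rho_\mu>\rho_\nu$ coming from the slack that $k=n-3$ provides. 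In the latter situation I would pair $D$ with a second configuration $D'$ of the same bi-degree whose $\varphi$-value has leading term matching $c\,\rho_\mu$ exactly, so that $\LM(\varphi(\Delta(D)-\Delta(D')))$ drops to the desired $\rho_\nu$ after subtraction.

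I expect the main obstacle to be the explicit combinatorial choice of the pair $(D,D')$ at $k=n-3$, particularly for partitions $\nu$ with a single large part or with parts near the maximal size allowed by the constraint $\delta=d_2$, where the small value of $n-k$ leaves minimal room on the $y=0$ axis and the leading monomial of a naive staircase candidate jumps above $\rho_\nu$. The computation will be organized by the shape of $\nu$ (number of parts and their multiplicities), using Lemma \ref{varphi_computation_lemma}(iv) to factor $\varphi$ block-wise so that only the ``anomalous'' block requires a correction and $D$ and $D'$ differ only inside that block, and using Lemma \ref{product of lowest terms}(c) to track leading terms through the block product. Once the family $\{f_\nu\}_{\nu\in\Pi_{d_2,k}}$ is constructed, the linear-independence argument sketched in the first paragraph combined with Proposition \ref{prop:upper bound} yields $\dim M_{d_1,d_2}=p(d_2,k)$, completing the proof.
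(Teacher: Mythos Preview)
Your overall strategy matches the paper's exactly: invoke Proposition \ref{prop:upper bound} for the upper bound, construct $\{f_\nu\}_{\nu\in\Pi_{d_2,k}}$ with distinct $\LM(\varphi(f_\nu))=\rho_\nu$, and use the echelon argument together with the well-definedness of $\bar\varphi$ (this is Lemma \ref{lem:bar phi}, not Definition \ref{linearcompar}(c)) to get the lower bound. Splitting into $k\le n-4$ (cite Theorem \ref{main:k<=n-4}) and $k=n-3$ is also exactly right, as is the cancellation mechanism: both $\varphi(D)$ and $\varphi(D')$ share a leading monomial strictly above $\rho_\nu$, and the difference drops to $\rho_\nu$.

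However, your diagnosis of \emph{which} partitions at $k=n-3$ are problematic is off, and this would send you searching in the wrong place. The obstruction is not ``a single large part'' or ``parts near the maximal size.'' Rather, in the block decomposition $\tilde\nu$ of Definition \ref{seq_of_substrings}, every building block $E_\mu$ in table (\ref{table}) satisfies $\#E_\mu\le|\mu|+1$ \emph{except} $E_{(1,1)}$, which has $4=|\mu|+2$ points; this is the sole reason the point-count bound $n_0\le k+4$ in the proof of Theorem \ref{main:k<=n-4} can exceed $n$ when $k=n-3$. So the problematic $\nu$ are precisely those whose number of $1$'s is $\equiv 2\pmod 3$, forcing a $(1,1)$ block. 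The paper's fix is local to that block: replace the $4$-point $E_{(1,1)}$ by two $3$-point configurations $E'_{(1,1)}$, $E''_{(1,1)}$ with $\varphi(E'_{(1,1)})=\rho_2$ and $\varphi(E''_{(1,1)})=-\rho_1^2+\rho_2$, so that $\varphi(E'_{(1,1)})-\varphi(E''_{(1,1)})=\rho_1^2$. Then $D'_\nu$ and $D''_\nu$ differ only in that block, and Lemma \ref{varphi_computation_lemma}(iv) together with Lemma \ref{product of lowest terms}(c) gives $\LM\big(\varphi(D'_\nu)-\varphi(D''_\nu)\big)=\rho_1^2\cdot\prod_{i\ne\ell}\rho_{\tilde\nu_i}=\rho_\nu$. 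Once you redirect your case analysis to this $(1,1)$ block, the rest of your outline goes through unchanged.
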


\begin{remark}
Theorem \ref{main:k<=n-3} gives a positive answer to Conjecture 8 in
\cite{LL}. Theorem \ref{main:k<=n-4} and Theorem \ref{main:k<=n-3}
are proved using the same idea. In the proofs, we give explicit
constructions for $D_\nu$ (in Theorem \ref{main:k<=n-4}) and $f_\nu$
(in Theorem \ref{main:k<=n-3}). The constructions are non-canonical
in the sense that there are choices to make, and it seems that no
choice is more natural than others.
\end{remark}

Before we prove the above two theorems, we shall give an example to
illustrate the idea of the construction.

\begin{exmp}
We illustrate Theorem \ref{main:k<=n-4} by giving a construction of
$D_\nu$ for $n=18$, $k=14$, $(d_1,d_2)=(84,7)$,
$\nu=(1,1,1,2,2,3,4)$. First, we divide $\nu$ into 3 sub-partitions
$\tilde{\nu}_1=(1,1,1)$, $\tilde{\nu}_2=(2,2)$,
$\tilde{\nu}_3=(3,4)$. For each sub-partition $\tilde{\nu}_i$, we
construct $D_i\in\D'$ as follows:
$$%
    \setlength{\unitlength}{1pt}
    \begin{picture}(100,30)(-50,0)
    \put(-30,5){$D_3=$}
    \put(0,10){\circle*{5}}\put(10,0){\circle*{5}}\put(20,0){\circle*{5}}
    \put(20,10){\circle*{5}}\put(30,0){\circle*{5}}\put(40,0){\circle*{5}}
    \boxs{0,0}\boxs{10,0}\boxs{20,0}\boxs{30,0}
    \boxs{0,10}\boxs{10,10}\boxs{20,10}\boxs{30,10}
    \linethickness{1pt}\put(10,0){\line(0,1){27}}
    \linethickness{1pt}\put(-5,0){\line(1,0){54}}
    \end{picture}
    \setlength{\unitlength}{1pt}
    \begin{picture}(90,30)(-50,0)
    \put(-30,5){$D_2=$}
    \put(0,10){\circle*{5}}\put(10,0){\circle*{5}}\put(10,10){\circle*{5}}
    \put(20,0){\circle*{5}}
    \boxs{0,0}\boxs{10,0}\boxs{20,0}
    \boxs{0,10}\boxs{10,10}\boxs{20,10}
    \linethickness{1pt}\put(10,0){\line(0,1){27}}
    \linethickness{1pt}\put(-5,0){\line(1,0){44}}
    \end{picture}
    \setlength{\unitlength}{1pt}
    \begin{picture}(90,30)(-50,0)
    \put(-30,5){$D_1=$}
    \put(0,20){\circle*{5}}\put(10,10){\circle*{5}}\put(20,0){\circle*{5}}
    \boxs{0,0}\boxs{10,0}\boxs{20,0}
    \boxs{0,10}\boxs{10,10}\boxs{20,10}
    \linethickness{1pt}\put(20,0){\line(0,1){27}}
    \linethickness{1pt}\put(-5,0){\line(1,0){44}}
    \end{picture}
$$ such that in the term order defined in \S\ref{total_order3}, the leading
monomials $$\LM(\varphi(D_i))=\rho_{\tilde{\nu}_i}, \quad \mbox{ for
} i=1,2,3.$$ Now putting $D_3$, $D_2$, $D_1$ together and adding
appropriate extra points if necessary, we obtain $D_\nu$ as in the
following graph.
    $$\setlength{\unitlength}{1.5pt}
    \begin{picture}(150,50)(0,-5)
    \put(-30,5){$D_\nu=$}
    \put(0,0){\circle*{5}}
    \put(0,10){\circle*{5}}\put(10,0){\circle*{5}}\put(20,0){\circle*{5}}
    \put(20,10){\circle*{5}}\put(30,0){\circle*{5}}\put(40,0){\circle*{5}}
    \put(60,10){\circle*{5}}\put(70,0){\circle*{5}}\put(70,10){\circle*{5}}\put(80,0){\circle*{5}}
    \put(90,20){\circle*{5}}\put(100,10){\circle*{5}}\put(110,0){\circle*{5}}
    \put(140,0){\circle*{5}}\put(150,0){\circle*{5}}\put(160,0){\circle*{5}}\put(170,0){\circle*{5}}
    \boxs{-10,0}\boxs{0,0}\boxs{10,0}\boxs{20,0}\boxs{30,0}\boxs{40,0}
    \boxs{50,0}\boxs{60,0}\boxs{70,0} \boxs{80,0}\boxs{90,0}\boxs{100,0}
    \boxs{110,0}\boxs{120,0}\boxs{130,0}\boxs{140,0}\boxs{150,0}\boxs{160,0}
    \boxs{-10,10}\boxs{0,10}\boxs{10,10}\boxs{20,10}\boxs{30,10}\boxs{40,10}
    \boxs{50,10}\boxs{60,10}\boxs{70,10}\boxs{80,10}\boxs{90,10}\boxs{100,10}
    \boxs{110,10}\boxs{120,10}\boxs{130,10}\boxs{140,10}\boxs{150,10}\boxs{160,10}
    \multiput(-18,23)(2,-2){14}{\circle*{.9}}
    \multiput(24,23)(2,-2){14}{\circle*{.9}}
    \multiput(-18,23)(3,0){14}{\circle*{.9}}
    \multiput(10.5,-5.5)(3,0){15}{\circle*{.9}}
      \put(7,34){\vector(0,-1){10}}
      \put(2,35){{$D_3$}}
    \multiput(42,23)(2,-2){14}{\circle*{.9}}
    \multiput(63,23)(2,-2){14}{\circle*{.9}}
    \multiput(42,23)(3,0){7}{\circle*{.9}}
    \multiput(70.5,-5.5)(3,0){8}{\circle*{.9}}
      \put(52,34){\vector(0,-1){10}}
      \put(47,35){{$D_2$}}
    \multiput(82,23)(2,-2){14}{\circle*{.9}}
    \multiput(94,23)(2,-2){14}{\circle*{.9}}
    \multiput(82,23)(3,0){4}{\circle*{.9}}
    \multiput(110.5,-5.5)(3,0){5}{\circle*{.9}}
      \put(92,34){\vector(0,-1){10}}
      \put(87,35){{$D_1$}}
    \multiput(112,23)(2,-2){14}{\circle*{.9}}
    \multiput(154,23)(2,-2){14}{\circle*{.9}}
    \multiput(112,23)(3,0){14}{\circle*{.9}}
    \multiput(140.5,-5.5)(3,0){14}{\circle*{.9}}
      \put(132,34){\vector(0,-1){10}}
      \put(120,35){{extra points}}
    \linethickness{1pt}\put(0,0){\line(0,1){27}}
    \linethickness{1pt}\put(-15,0){\line(1,0){195}}
    \end{picture}
    $$
It satisfies
$\LM(\varphi(D_\nu))=\LM(\varphi(D_1))\cdot\LM(\varphi(D_2))\cdot\LM(\varphi(D_3))=\rho_{\tilde{\nu}_1}\rho_{\tilde{\nu}_2}\rho_{\tilde{\nu}_3}=\rho_\nu.$
\qed
\end{exmp}

To generalize the above example, we need to separate a partition
$\nu$ into substrings $\tilde{\nu}_1,\tilde{\nu}_2,\dots$, each of which
contains at most 3 numbers. Every substring $\tilde{\nu}_j$
corresponds to a $D_j\in\D'$ satisfying
$\LM(\varphi(D_j))=\rho_{\tilde{\nu}_j}$. The correspondence is
specified in table (\ref{table}). Then by putting all $D_j$ together
and adding appropriate extra points if necessary, we obtain $D\in\D$
such that
$$\LM(\varphi(D))=\prod_j\LM(\varphi(D_j))=\prod_j\rho_{\tilde{\nu}_j}=\rho_\nu.$$

\subsection{Proof of the main theorem.}
The following crucial lemma provides an effective method to verify
if a set of alternating polynomials is linearly independent by using
$\varphi$.

\begin{lem}\label{lem:bar phi}
  Fix  $(d_1,d_2)$. Let  $f\in\C[x_1,y_1,\dots,x_n,y_n]^\epsilon$ be
  a bi-homogeneous alternating polynomial of bi-degree $(d_1,d_2)$.
  If $\varphi(f)\neq 0$, then $f\not\equiv 0$
  modulo lower degrees.
  As a consequence, $\varphi$ induces a well-defined linear map
$$
\bar{\varphi}: M_{d_1,d_2} \longrightarrow \C[\rho_1, \rho_2,...]_k.
$$
\end{lem}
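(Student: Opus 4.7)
The plan is to prove the contrapositive: if a bi-homogeneous alternating polynomial $f\in\C[\mathbf{x},\mathbf{y}]^\epsilon$ of bi-degree $(d_1,d_2)$ lies in $(\mathbf{x},\mathbf{y})I$, then $\varphi(f)=0$. Well-definedness of $\varphi$ on $\C[\mathbf{x},\mathbf{y}]^\epsilon$ follows from the identity $\varphi(D^\pi)=\mathrm{sgn}(\pi)\varphi(D)$ for any $\pi\in S_n$, obtained by a direct reindexing of the sum in Definition~\ref{linearcompar}(a); this matches the antisymmetry of $\Delta(D)$, so the formula $\varphi(\sum c_i\Delta(D_i)):=\sum c_i\varphi(D_i)$ is unambiguous. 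The weight bookkeeping in Definition~\ref{linearcompar}(b) forces $\varphi$ to send bi-degree $(d_1,d_2)$ elements into $\C[\rho]_k$ with $k={n\choose 2}-d_1-d_2$.

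To establish the vanishing on $(\mathbf{x},\mathbf{y})I\cap\C[\mathbf{x},\mathbf{y}]^\epsilon$, first reduce to a concrete identity. Any $f\in(\mathbf{x},\mathbf{y})I$ can be written $f=\sum_i m_i\Delta(D_i)$ with $m_i\in(\mathbf{x},\mathbf{y})$; applying the alternator $\mathrm{Alt}$ (which fixes $f$) replaces each $m_i$ by its symmetrization $q_i\in\C[\mathbf{x},\mathbf{y}]^{S_n}\cap(\mathbf{x},\mathbf{y})$, giving $f=\sum_i q_i\Delta(D_i)$. Every such $q_i$ lies in the ideal of $\C[\mathbf{x},\mathbf{y}]^{S_n}$ generated by the bi-power sums $p_{c,e}:=\sum_{j=1}^n x_j^cy_j^e$ with $(c,e)\neq(0,0)$. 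After re-expanding (symmetric)$\cdot\Delta$ products in the $\{\Delta(D)\}$-basis, it suffices to prove
\[
\varphi\bigl(p_{c,e}\cdot\Delta(D)\bigr)=0 \qquad\text{for all }D\in\D_n,\ (c,e)\neq(0,0).
\]
By Lemma~\ref{sum}, $p_{c,e}\Delta(D)=\sum_{i=1}^n\Delta(D_{i,c,e})$ where $D_{i,c,e}$ is $D$ with its $i$-th point shifted by $(c,e)$, so the task becomes $\sum_{i=1}^n\varphi(D_{i,c,e})=0$.

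The core of the argument is this final identity, attacked through the determinant presentation in Definition~\ref{linearcompar}(b). Setting $g_i(t):=t^{|P_i|}(1+\rho_1t+\rho_2t^2+\cdots)^{b_i}$, one has $\varphi(D)=(-1)^k\det M$ with $M_{ij}=[t^{j-1}]g_i(t)$. The shift $(a_i,b_i)\mapsto(a_i+c,b_i+e)$ multiplies $g_i$ by $u(t):=t^{c+e}(1+\rho_1t+\rho_2t^2+\cdots)^e$, so in $\varphi(D_{i,c,e})$ the $i$-th row of $M$ is replaced by the $i$-th row of the product $MU$, where $U$ is the upper-triangular Toeplitz matrix with entries $U_{\ell,j}=[t^{j-\ell}]u(t)$. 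The sign $(-1)^k$ shifts uniformly to $(-1)^{k-c-e}$ (independent of $i$), so everything reduces to $\sum_i\det M^{(i)}=0$, where $M^{(i)}$ denotes $M$ with its $i$-th row replaced by the $i$-th row of $MU$.

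Expanding $(MU)_i=\sum_\ell M_{i,\ell}U_\ell$ (with $U_\ell$ the $\ell$-th row of $U$) and using row-multilinearity of the determinant, $\sum_i\det M^{(i)}=\sum_\ell\sum_iM_{i,\ell}\det(M[i\to U_\ell])$. Expanding the inner determinant along its $i$-th row and invoking the cofactor orthogonality $\sum_iM_{i,\ell}C_{i,j}(M)=\delta_{\ell,j}\det M$ (which for $\ell\neq j$ comes from a repeated-column argument), this collapses to $\sum_i\det M^{(i)}=\mathrm{tr}(U)\cdot\det M$. Because $c+e\ge 1$ forces $[t^0]u(t)=0$, every diagonal entry $U_{\ell,\ell}$ vanishes and $\mathrm{tr}(U)=0$. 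The main obstacle is recognizing the row-replacement as matrix multiplication by a Toeplitz matrix; once that structural observation is in place, the trace identity $\mathrm{tr}(U)=0$ is an immediate consequence of the $t^{c+e}$ prefactor in $u(t)$.
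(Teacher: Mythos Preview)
Your argument is correct and takes a genuinely different route from the paper's. The paper proves the lemma indirectly: it invokes Proposition~\ref{prop:varphi and Delta} to express (after enlarging $n$) the class of $f$ as a combination $\sum_\mu a_\mu\Delta(F_\mu)$ of special minimal staircase forms, with the coefficients $a_\mu$ encoded by $\varphi(f)$; then it appeals to Lemma~\ref{lem:linear independence} (which ultimately rests on the Main Theorem of \cite{LL}) to conclude that nonvanishing of some $a_\mu$ forces $f\not\equiv 0$.

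You instead prove directly that $\varphi$ kills $(\mathbf{x},\mathbf{y})I\cap\C[\mathbf{x},\mathbf{y}]^\epsilon$, reducing via the symmetrization/alternation trick and Weyl's theorem on multisymmetric power sums to the single identity $\sum_i\varphi(D_{i,c,e})=0$, and then recognizing the row-replacement as right multiplication by a strictly upper-triangular Toeplitz matrix so that the trace formula $\sum_i\det M^{(i)}=\mathrm{tr}(U)\det M$ finishes the job. This is more elementary and self-contained: it avoids the passage to large $n$, the special minimal staircase forms, and the dependence on \cite{LL}. The paper's route, on the other hand, reuses machinery (Proposition~\ref{prop:varphi and Delta}) that is needed elsewhere anyway, and makes transparent the combinatorial meaning of $\varphi(f)$ as the coefficient vector in the staircase-form basis. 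One minor point: your remark on well-definedness of $\varphi$ on $\C[\mathbf{x},\mathbf{y}]^\epsilon$ should also note that the $\Delta(D)$ for distinct $n$-subsets $D\subset\N\times\N$ are linearly independent (so the extension from basis elements is unambiguous), not just the sign compatibility under reordering; this is standard but worth saying.
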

\begin{proof}
Suppose $\varphi(f)\neq 0$. By Proposition \ref{prop:varphi and
Delta}, after replacing $n$ by a sufficiently large integer if
necessary, we can assume that $f$ is linearly equivalent to
$\sum_\mu a_\mu F_\mu$ modulo lower degrees, where $F_\mu$ are
special minimal staircase forms. Since $\varphi(f)\neq 0$,
Proposition \ref{prop:varphi and Delta} guarantees $a_\mu\neq 0$ for
some $\mu$. Using the fact that $\{\Delta(F_\mu)\}_{\mu}$ are
linearly independent in $M_{d_1,d_2}$, we conclude that $f\not\equiv
0$ modulo lower degrees.
\end{proof}

The map $\bar{\varphi}$ is natural and useful in the study of
$M_{d_1,d_2}$. Our main theorem (Theorem \ref{main:k<=n-3}) implies
that, for $k:={n\choose 2}-d_1-d_2\le n-3$ $(d_2\le d_1)$, the map $\bar{\varphi}$
is injective and the image is spanned by
$\{\rho_\nu\}_{\nu\in\Pi_{d_2,k}}$. For more general $k$, we expect
that the injectivity still holds. All the computations we did so far
support this conjecture.
\begin{conj}\label{conj:bar phi injective}
The linear map $\bar{\varphi}$ is injective.
\end{conj}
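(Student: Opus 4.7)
The strategy is to match the upper bound on $\dim M_{d_1,d_2}$ given by $\bar{\varphi}$ with explicitly constructed linearly independent images. The image of $\bar{\varphi}$ already lies in the $p(d_2,k)$-dimensional subspace $\mathrm{span}\{\rho_\nu : \nu \in \Pi_{d_2,k}\}$ of $\C[\rho]_k$, because, by Definition \ref{linearcompar}(a), $\varphi(D)$ is a $\Z$-linear combination of monomials of total $\rho$-degree $\sum b_i = d_2$. Injectivity of $\bar{\varphi}$ is therefore equivalent to $\dim \mathrm{Im}(\bar{\varphi}) = \dim M_{d_1,d_2}$, and by Corollary \ref{cor:Dcatalan} the latter equals $|(\D_n^{catalan})_{d_1,d_2}|$. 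Hence the plan is to exhibit, for each $D$ in $(\D_n^{catalan})_{d_1,d_2}$, an alternating polynomial $f_D$ representing a class in $M_{d_1,d_2}$ such that the family $\{\varphi(f_D)\}$ is linearly independent in $\C[\rho]_k$.

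Concretely, I would try to attach to each $D \in (\D_n^{catalan})_{d_1,d_2}$ a partition $\nu(D) \in \Pi_{d_2,k}$ together with an alternating polynomial $f_D$, constructed as a signed $\Z$-combination of $\Delta$'s obtained from $D$ by iterated applications of the Minors Permuting Lemma (Lemma \ref{lem:Minors Permuting Lemma}), so that $\LM(\varphi(f_D)) = \rho_{\nu(D)}$ in the term order of \S\ref{total_order3} and so that $D \mapsto \nu(D)$ is injective. Distinct leading monomials then force linear independence, finishing the proof. The block decomposition (Lemma \ref{varphi_computation_lemma}(iv)) together with the multiplicativity of leading monomials (Lemma \ref{product of lowest terms}) suggests processing the diagonal blocks of $D$ separately: each block contributes a subpartition of $\nu(D)$, and $f_D$ is assembled from local modifications, exactly extending the strategy of Theorems \ref{main:k<=n-4} and \ref{main:k<=n-3}.

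The main obstacle is to define $\nu$ intrinsically on all of $\D_n^{catalan}$ and to verify injectivity in every bi-degree. When $k \leq n-3$ the map $\nu$ produced in Theorems \ref{main:k<=n-4} and \ref{main:k<=n-3} can be taken to be a surjection onto $\Pi_{d_2,k}$, so no choice among partitions is needed; in general $|(\D_n^{catalan})_{d_1,d_2}| < p(d_2,k)$ and a combinatorial rule must single out the image subset. A plausible approach is a greedy staircase-peeling on $D$: identify the maximal sub-staircase lying on the lines $y=0$ and $y=1$, record the horizontal ``gaps'' as the parts of $\nu(D)$, remove the processed points, and recurse on what remains. Verifying injectivity of such a $\nu$ and, simultaneously, arranging the signs in $f_D$ so that no unintended higher monomial survives in $\varphi(f_D)$ both require a case analysis whose complexity grows with $k$ and with the shape of $D$. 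A secondary subtlety is that Proposition \ref{prop:theta} provides only the abstract bijection $\Lambda_n \cong \D_n^{catalan}$ and not a direct link to the area-dinv parameterization of Garsia--Haglund; reconciling $\nu(D)$ with that combinatorics, possibly through an intermediate Dyck-path encoding, appears necessary to complete the bookkeeping and match cardinalities on both sides.
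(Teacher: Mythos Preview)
The statement you are attempting to prove is explicitly left open in the paper: it is labeled Conjecture~\ref{conj:bar phi injective}, and the paper does \emph{not} supply a proof. What the paper does prove is the conditional Proposition following it, namely that Conjecture~\ref{conj} (asserting that $\{\Delta(D(\lambda))\}_{\lambda\in\Lambda_n}$ generates $I$, equivalently that the elements of $\D_n^{catalan}$ of a given bi-degree form a basis of the corresponding $M_{d_1,d_2}$) implies the injectivity of $\bar\varphi$. That conditional argument is short and quite different in spirit from yours: given $f$ with $\bar\varphi(f)=0$, one writes $f\equiv\sum_i a_i\,\Delta(D_i)$ with $D_i\in\D_n^{catalan}$ (this step uses Conjecture~\ref{conj}), passes to $\D_{n+\ell}^{catalan}$ by prepending $\ell$ points on the $x$-axis, observes via Lemma~\ref{varphi_computation_lemma}(ii) that $\bar\varphi$ is unchanged, and then invokes Theorem~\ref{main:k<=n-3} (since now $k\le (n+\ell)-3$) together with Conjecture~\ref{conj} at level $n+\ell$ to force all $a_i=0$.

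Your proposed route---attach to every $D\in(\D_n^{catalan})_{d_1,d_2}$ an alternating polynomial $f_D$ with pairwise distinct $\LM(\varphi(f_D))$---would, if carried out, give an \emph{unconditional} proof, and in fact would also establish Conjecture~\ref{conj} along the way (since distinct leading monomials force the classes $\overline{\Delta(D)}$ to be linearly independent, and their number already matches $\dim M_{d_1,d_2}$ by Corollary~\ref{cor:Dcatalan}). You have correctly located the real difficulty: defining the partition map $\nu$ on all of $\D_n^{catalan}$, proving it injective, and engineering $f_D$ so that $\LM(\varphi(f_D))=\rho_{\nu(D)}$. None of the tools in the paper (Lemma~\ref{varphi_computation_lemma}, Lemma~\ref{lem:Minors Permuting Lemma}, the block decomposition) are known to handle this in the generality you need; the paper's constructions in Theorems~\ref{main:k<=n-4} and~\ref{main:k<=n-3} work only because for $k\le n-3$ one can freely choose minimal staircase forms of any desired partition type, a freedom that disappears for larger $k$. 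So your sketch is a reasonable program but not a proof, and the paper offers no proof to compare it to.

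One small correction: your justification that $\mathrm{Im}(\bar\varphi)\subseteq\mathrm{span}\{\rho_\nu:\nu\in\Pi_{d_2,k}\}$ is slightly misstated. The monomials appearing in $\varphi(D)$ have $\rho$-degree \emph{at most} $\sum b_i=d_2$, not exactly $d_2$, because some factors $\rho_{w_j}$ may equal $\rho_0=1$; this is precisely what gives partitions of $k$ into at most $d_2$ parts. The conclusion you draw is nonetheless correct.
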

In fact, we can show the following.
\begin{prop}
  Conjecture \ref{conj} implies Conjecture \ref{conj:bar phi
  injective}.
\end{prop}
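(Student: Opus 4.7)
The plan is to leverage Conjecture \ref{conj} to produce a bigraded basis of $M$ and then verify injectivity of $\bar\varphi$ on this basis. Assuming Conjecture \ref{conj}, the set $\{\Delta(D(\lambda))\}_{\lambda\in\Lambda_n}$ generates the ideal $I$, so $\{\bar\Delta(D(\lambda))\}_{\lambda\in\Lambda_n}$ spans $M$. By Proposition \ref{prop:theta} (the bijection $\theta:\Lambda_n\to\D_n^{catalan}$) together with Corollary \ref{cor:Dcatalan}, the number of $\lambda$'s contributing in bidegree $(d_1,d_2)$ equals $\dim M_{d_1,d_2}$. Hence the spanning set is already a basis of $M_{d_1,d_2}$, and the injectivity of $\bar\varphi$ on $M_{d_1,d_2}$ reduces to showing that $\{\varphi(D)\}$ is linearly independent in $\C[\rho]_k$, where $D$ ranges over the elements of $\D_n^{catalan}$ of bidegree $(d_1,d_2)$.

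To establish this linear independence I would mimic the leading-term strategy employed in Theorems \ref{main:k<=n-4} and \ref{main:k<=n-3}. For each such $D$ I would exhibit an explicit partition $\nu(D)\in\Pi_{d_2,k}$ and prove
\[
\LM(\varphi(D))=\rho_{\nu(D)}
\]
in the term order of Definition \ref{total_order_par_k=n-3}, in such a way that $D\mapsto\nu(D)$ is injective. Pairwise distinct leading monomials then force the $\varphi(D)$'s to be linearly independent by a routine Gaussian elimination on the monomial basis $\{\rho_\nu\}$ of $\C[\rho]_k$.

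The recipe for $\nu(D)$ and the associated leading-term computation should proceed via the block decomposition in Lemma \ref{varphi_computation_lemma}(iv): cut $D$ at the indices $i$ with $|P_i|=i-1$, so that $\varphi(D)$ factors as a product over blocks, and then use Lemma \ref{product of lowest terms}(c) to read off the leading monomial block by block. Conditions (a) and (b) of Definition \ref{defn:Dcatalan} force each block to be close to a shifted staircase with a controlled bump of higher-row points. Pure-staircase blocks are handled by Lemma \ref{varphi_computation_lemma}(v) and single deviations by Lemma \ref{varphi_computation_lemma}(vi); the general block requires a separate leading-term analysis of the Vandermonde-type determinant in Definition \ref{linearcompar}(b).

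The hard part will be proving the injectivity of $D\mapsto\nu(D)$ and carrying out the leading-monomial computation on blocks that contain several higher-row points simultaneously. Such mixed blocks lie outside the scope of parts (v) and (vi) of Lemma \ref{varphi_computation_lemma}, so one likely has to combine the Vandermonde determinant expansion with iterated applications of the Minors Permuting Lemma (Lemma \ref{lem:Minors Permuting Lemma}) to isolate the dominant term under the chosen term order. This block-level leading-term analysis is the key technical step on which the whole implication rests.
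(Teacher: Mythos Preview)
Your first paragraph is correct and matches the paper: assuming Conjecture~\ref{conj}, the $\Delta(D)$ for $D\in\D_n^{catalan}$ of bidegree $(d_1,d_2)$ form a basis of $M_{d_1,d_2}$, so injectivity of $\bar\varphi$ reduces to linear independence of the $\varphi(D)$'s.

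From that point on, however, you diverge sharply from the paper and leave a real gap. Your plan is to compute $\LM(\varphi(D))$ for every $D\in\D_n^{catalan}$ and show these leading monomials are pairwise distinct. You correctly identify this as ``the hard part,'' and indeed it is: the block factorization of Lemma~\ref{varphi_computation_lemma}(iv) only applies when there are indices $i$ with $|P_i|=i-1$, and a general $D\in\D_n^{catalan}$ can have blocks containing many points in rows $\ge 1$ simultaneously. Nothing in the paper computes leading monomials for such blocks, and Lemma~\ref{lem:Minors Permuting Lemma} is a statement about $\Delta$-classes in $M$, not about $\varphi$-values in $\Z[\rho]$, so it does not help isolate a dominant monomial. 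In short, your proposed leading-term analysis is a substantial new project, not a proof.

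The paper bypasses all of this with a one-line stabilization trick. Given $D_i\in\D_n^{catalan}$, set
\[
D_i' \;=\; \{(0,0),(1,0),\dots,(\ell-1,0)\}\cup\bigl(D_i+(\ell,0)\bigr)\in\D_{n+\ell}^{catalan}
\]
as in the proof of Proposition~\ref{prop:upper bound}. By Lemma~\ref{varphi_computation_lemma}(ii) one has $\varphi(D_i')=\varphi(D_i)$, so $\bar\varphi\bigl(\sum_i a_i\Delta(D_i')\bigr)=\bar\varphi(f)=0$. For $\ell$ large enough that $k\le (n+\ell)-3$, Theorem~\ref{main:k<=n-3} gives $\dim M^{(n+\ell)}_{d_1+{\ell\choose 2}+n\ell,\,d_2}=p(d_2,k)=\dim\mathrm{Im}\,\bar\varphi$, so $\bar\varphi$ is injective there and $\sum_i a_i\Delta(D_i')\equiv 0$. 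Since the $D_i'$ are distinct elements of $\D_{n+\ell}^{catalan}$, Conjecture~\ref{conj} (now for $n+\ell$) forces all $a_i=0$. No leading-term computation for general Catalan diagrams is needed.
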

\begin{proof}
  Assume that Conjecture \ref{conj} is true. Suppose $f\in\C[x_1,y_1,\dots,x_n,y_n]^\epsilon$
  is a bi-homogeneous alternating polynomial of bi-degree
  $(d_1,d_2)$  satisfying
  $\bar{\varphi}(f)=0$.

  Conjecture \ref{conj} implies that the elements of
  $\D_n^{catalan}$ with bi-degree $(d_1,d_2)$ form a basis of
  $M_{d_1,d_2}$, so we can express $f$ as a linear combination
  $\sum_i a_i\, \Delta(D_i)$, $D_i\in \D_n^{catalan}$.  Define $D'_i\in\D_{n+\ell}^{catalan}$ as in
  the proof of Proposition \ref{prop:upper bound}. Then
  $$\bar{\varphi}(\sum_i a_i\, \Delta(D'_i))=\bar{\varphi}(\sum_i
  a_i\, \Delta(D_i))=\bar{\varphi}(f)=0.$$ But $\bar{\varphi}: M_{d_1+\ell,d_2}\to \C[\rho_1,\rho_2,\dots]_k$
  is injective (since $k\le (n+\ell)-3$ for sufficiently large
  $\ell$). So $\sum_i a_i\, \Delta(D'_i)=0$, which implies $a_i=0\;\forall
  i$ and therefore $f\equiv\sum_i a_i\, \Delta(D_i)=0$.
\end{proof}

\begin{lem}\label{partitions with one part}
Let $w\ge 2\in\N$. Suppose
$$D=\{P_1,\dots,P_{w+1}\}\in\D'_{w+1},$$
where $P_i$ are all distinct and
$$|P_1|=|P_2|=0,\quad |P_i|=i-2, \quad 3\leq i\leq w+1.$$
Then the leading term $$\LT(\varphi(D))=(|P_1|_y-|P_2|_y)\rho_w.$$
In particular, the leading monomial $$\LM (\varphi(D))=\rho_{w}.$$
\end{lem}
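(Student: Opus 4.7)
The plan is to compute the coefficient of $\rho_w$ in $\varphi(D)$ directly from Definition~\ref{linearcompar}(a), and separately argue that $\rho_w$ is already the maximum element of $\Q[\rho]_w$ under the term order of \S\ref{total_order3}, so this coefficient is automatically the leading coefficient.

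First I would verify that $\varphi(D)\in\Q[\rho]_w$: the global weight is $k = \binom{w+1}{2}-\sum_i|P_i| = \binom{w+1}{2}-\binom{w}{2}=w$. Next, for any partition $\nu=(\nu_1\le\cdots\le\nu_m)$ of $w$ with $m\ge 2$, the smallest part satisfies $\nu_1 \le \lfloor w/2 \rfloor < w$ (this is where the hypothesis $w\ge 2$ enters), so comparing first parts in Definition~\ref{total_order_par_k=n-3} yields $\rho_\nu < \rho_{(w)} = \rho_w$. Hence $\rho_w$ is the unique largest weight-$w$ monomial, and it suffices to show $[\rho_w]\varphi(D) = b_1-b_2 \ne 0$.

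To extract that coefficient I would analyze when a product $\prod_{i}\rho_{w^{(i)}_1}\cdots\rho_{w^{(i)}_{b_i}}$ equals $\rho_w$: exactly one variable must equal $\rho_w$, and all others must equal $\rho_0=1$. So there is a unique pair $(i^*,j^*)$ with $w^{(i^*)}_{j^*}=w$, while every other row forces the entire sum constraint to vanish, i.e.\ $\sigma(i)=|P_i|+1$ for $i\ne i^*$. Because $|P_1|=|P_2|=0$, this would require $\sigma(1)=\sigma(2)=1$ unless $i^*\in\{1,2\}$, so only two permutations contribute. In the case $i^*=1$, $\sigma$ is the single $(w{+}1)$-cycle $1\mapsto w{+}1\mapsto w\mapsto\cdots\mapsto 2\mapsto 1$, of sign $(-1)^{w}$, and the slot $j^*$ ranges over $\{1,\dots,b_1\}$; in the case $i^*=2$, $\sigma$ fixes $1$ and cycles $2\mapsto w{+}1\mapsto w\mapsto\cdots\mapsto 3\mapsto 2$, a $w$-cycle of sign $(-1)^{w-1}$ with multiplicity $b_2$.

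Assembling these two contributions with the overall $(-1)^k=(-1)^w$ prefactor gives
\[
  [\rho_w]\varphi(D) \;=\; (-1)^{w}\bigl[(-1)^{w}b_1 + (-1)^{w-1}b_2\bigr] \;=\; b_1-b_2 \;=\; |P_1|_y-|P_2|_y.
\]
Since $P_1\ne P_2$ both lie on the line $a+b=0$, they are determined by their $y$-coordinates, so $b_1\ne b_2$ and the coefficient is nonzero, proving both the leading term and leading monomial claims. The only real subtlety is the elementary but essential observation that any other choice of $i^*$ would force the collision $\sigma(1)=\sigma(2)=1$; once that is in hand, the cycle analysis and sign bookkeeping are routine.
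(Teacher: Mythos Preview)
Your argument is correct and is precisely the direct computation the paper has in mind when it says the lemma ``immediately follows from the definition of $\varphi(D)$.'' You have supplied all the details the paper omits: the weight check $k=w$, the maximality of $\rho_w$ in the term order, the reduction to $i^*\in\{1,2\}$ via the collision $\sigma(1)=\sigma(2)=1$, and the cycle/sign bookkeeping yielding $b_1-b_2$; this is exactly the template the paper then spells out for the harder two-part Lemma~\ref{partitions with two parts}.
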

\begin{proof}
  Immediately follows from the definition of $\varphi(D)$.
\end{proof}
\begin{lem}\label{partitions with two parts}
Let $v,w\in\N$ and $2\leq v\leq w$. Suppose
$$D=\{P_1,...,P_{w+2}\}\in\D'_{w+2},$$
where $P_i$ are all distinct and
$$
|P_i|=\left\{\begin{array}{ll} 0, & \text{ if\; }i=1, 2;\\
i-2, & \text{ if\; }3\leq i\leq w-v+3;\\
i-3, & \text{ if\; }w-v+4\leq i\leq w+2.
\end{array} \right.
$$
Then the leading term $$\LT(\varphi(D))=-(|P_1|_y-|P_2|_y)(|P_{w-v+3}|_y-|P_{w-v+4}|_y)\rho_v\rho_w.$$
In particular, the leading monomial $$\LM (\varphi(D))=\rho_{v}\rho_{w}.$$
\end{lem}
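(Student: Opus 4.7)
The plan is to extract the leading term of $\varphi(D)$ directly from the determinant formula in Definition \ref{linearcompar}(b). The relevant $(w+2)\times(w+2)$ matrix has entries $h(b_i,j-|P_i|)$ with $b_i=|P_i|_y$ and $j=0,1,\dots,w+1$; each entry of weight $w'\ge 1$ has $\rho_{w'}$ (with coefficient $b_i$) as its leading monomial, and the maximum possible weight of any entry is $w+1$, attained only by rows $1,2$ at column $w+1$.

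The first step is to show that no monomial $\rho_\mu$ strictly larger than $\rho_v\rho_w$ occurs in $\varphi(D)$. Partitions $\mu$ of $v+w$ that dominate $(v,w)$ in the order of \S\ref{total_order3} are exactly those whose smallest part exceeds $v$, so every entry contributing a nonconstant factor to such a $\rho_\mu$ must have weight $\ge v+1$. The inequality $(w+1)-|P_i|\ge v+1$ restricts these entries to rows $i\le w-v+2$. A single such entry cannot carry weight $v+w$ because $w+1<v+w$, so two or more must be nonzero; but then all rows of $\{w-v+3,\ldots,w+2\}$ would be forced to be weight-zero rows, and the two rows $w-v+3,w-v+4$, sharing norm $w-v+1$, would send their weight-zero entries to the same column of the determinant --- impossible. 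Hence the coefficient of every $\rho_\mu>\rho_v\rho_w$ vanishes.

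The second step is to identify the permutations contributing to $\rho_v\rho_w$. Any such permutation must have exactly two nonzero-weight rows producing $\rho_v$ and $\rho_w$, with the remaining $w$ weight-zero rows mapped to their natural columns $|P_i|+1$. Since the multiset of norms is $\{0,0,1,2,\dots,w-v+1,w-v+1,w-v+2,\dots,w-1\}$, distinctness of weight-zero columns forces the two nonzero rows to be exactly one $\alpha'\in\{1,2\}$ and one $\beta'\in\{w-v+3,w-v+4\}$, placed in the only unused columns $w$ and $w+1$. Of the two column assignments, only $\alpha'\mapsto w$, $\beta'\mapsto w+1$ yields weights $\{v,w\}$; the other yields $\{v-1,w+1\}$, whose partition is smaller in our order and may be disregarded. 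Four permutations therefore remain, indexed by the pair $(\alpha',\beta')$.

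Finally, the sign and coefficient of each of the four permutations can be computed directly. Since the weight-zero rows map to strictly increasing columns, the only inversions involve $\sigma(\alpha')=w+1$ and $\sigma(\beta')=w+2$; a straightforward count, in which $\alpha'\in\{1,2\}$ controls the number of weight-zero rows greater than $\alpha'$ and $\beta'\in\{w-v+3,w-v+4\}$ similarly controls those greater than $\beta'$, gives signs $(-1)^{v+w-3},(-1)^{v+w-2},(-1)^{v+w-2},(-1)^{v+w-1}$ with respective coefficients $b_2b_{w-v+4}$, $b_2b_{w-v+3}$, $b_1b_{w-v+4}$, $b_1b_{w-v+3}$. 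Summing and multiplying by the overall $(-1)^k=(-1)^{v+w}$ from the definition of $\varphi$ yields $-(b_1-b_2)(b_{w-v+3}-b_{w-v+4})$ as the coefficient of $\rho_v\rho_w$. This is nonzero, because distinct points sharing norm $0$ (respectively $w-v+1$) must differ in their $y$-coordinates. The main technical point is the sign bookkeeping for the four cases; once one realizes that only the two ``flexible'' rows $\alpha',\beta'$ generate inversions, the count is routine.
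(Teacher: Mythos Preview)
Your proof is correct and follows essentially the same approach as the paper's: both arguments rule out monomials strictly larger than $\rho_v\rho_w$, then isolate the same four contributing permutations (indexed by a choice of one row from $\{1,2\}$ and one from $\{w-v+3,w-v+4\}$, matching the paper's $(\epsilon,\delta)$), and compute the coefficient $-(b_1-b_2)(b_{w-v+3}-b_{w-v+4})$ by summing the four signed products. The only variation is in the first step: the paper argues directly from Definition~\ref{linearcompar}(a) that some index $u\in\{w-v+3,w-v+4\}$ contributes a factor $\rho_{w^{(u)}_j}$ with $w^{(u)}_j\le v$, whereas you use the determinant formulation of Definition~\ref{linearcompar}(b) and a column-collision argument (two weight-zero rows with equal norm $w-v+1$ cannot coexist). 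Both routes are short and equivalent in spirit.
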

\begin{exmp}
For $v=2$, $w=3$,
$D=\{(-1,1),(0,0),(0,1),(0,2),(1,1)\}$.
$$
\begin{picture}(50,25)
\put(-3,-3){$\bullet$}\put(-13,7){$\bullet$}\put(-3,7){$\bullet$}\put(7,7){$\bullet$}
\put(-3,17){$\bullet$}
\boxs{-10,0}\boxs{0,0}\boxs{10,0}
\boxs{-10,10}\boxs{0,10}\boxs{10,10}
\linethickness{1pt}\put(0,0){\line(0,1){27}}
\linethickness{1pt}\put(-15,0){\line(1,0){40}}
\end{picture}
$$
A simple computation shows that
$$\varphi(D)=-\rho_2\rho_3+\rho_1\rho_4+\rho_1\rho_2^2-2\rho_1^2\rho_3+2\rho_1^3\rho_2-\rho_1^5,$$
so the $\LT(\varphi(D))=-(1-0)(2-1)\rho_2\rho_3=-\rho_2\rho_3$ as asserted in the above lemma.
\end{exmp}

\begin{proof}[Proof of Lemma \ref{partitions with two parts}] Suppose
$\varphi(D)=\sum a_\mu \rho_\mu$. First we show that $a_\mu\neq0$
implies $\rho_\mu\le\rho_v\rho_w$. Suppose $a_\mu\neq 0$. There exist $\sigma\in S_{w+2}$ and integers $\{w^{(i)}_j\}$ such that  the summand
$$\left(\text{sgn}(\sigma) \prod_{i=1}^n
\rho_{w^{(i)}_1}\rho_{w^{(i)}_2}\cdots\rho_{w^{(i)}_{b_i}}\right)
$$
in (\ref{varphi_version2}) is not zero, and
\begin{equation}\label{summand}
\rho_\mu=\prod_{i=1}^n
\rho_{w^{(i)}_1}\rho_{w^{(i)}_2}\cdots\rho_{w^{(i)}_{b_i}}.
\end{equation}
Because of condition (\ref{varphi condition}), we must have
$$\sigma(i)- 1-a_i-b_i\ge 0,\quad\forall i\in[1,w+2],$$ in particular, $$\sigma(w-v+3)\ge
w-v+2,\quad \sigma(w-v+4)\ge w-v+2.$$ Since $\sigma$ is a
permutation, $\sigma(w-v+3)$ and $\sigma(w-v+4)$ are different
from each other, hence at least one of them is greater than or equal to
$w-v+3$. Let $u$ be $w-v+3$ or $w-v+4$ such that
$\sigma(u)\ge w-v+3$. Since $\sigma(u)\leq w+2$ and $|P_u|(=a_u+b_u)=w-v+1$, we have
$$1\le\sigma(u)-1-|P_u|\le v.$$
By condition (\ref{varphi condition}),
$$w^{(u)}_1+...+w^{(u)}_{b_u}= \sigma(u)-1-a_u-b_u\in[1,v],$$
Take $j\in\N^+$, $1\le j\le b_u$ such that $w^{(u)}_j\neq 0$, then $\rho_{w^{(u)}_j}$ is a factor of $\rho_\mu$ by (\ref{summand}).
Since $w^{(u)}_j\le v\le w$, $\rho_\mu\le \rho_v\rho_w$.
Therefore $a_\mu\neq 0$ implies
$\rho_\mu\le\rho_v\rho_w$.

Now we show that $a_{\mu}\neq 0$ for $\mu=(v,w)$. Assume the monomial $\rho_v\rho_w$ appears in (\ref{summand}).  By the above argument,
it is necessary that
$$\sigma(u)-1-|P_u|=v,$$
which implies $\sigma(u)=w+2$. Denote $\delta=u-(w-v+3)\in\{0,1\}$. On the other hand, since $\sigma(1)$ and $\sigma(2)$ cannot be 1,
we may assume $\sigma(1+\epsilon)\neq 1$ for $\epsilon\in\{0,1\}$. Then $\sigma(1+\epsilon)-1-|P_{1+\epsilon}|=w$, hence
$\sigma(1+\epsilon)=w+1$. For every positive integer $i\le w+2$ that $i\neq 1+\epsilon,\, i\neq u$, we must have $\sigma(i)=1+|P_i|$.
So $\sigma\in S_n$ must be one of the following.
$$ \sigma(i)=\left\{\begin{array}{ll}
1,& \text{ if }i=2-\epsilon; \\
w+1, &  \text{ if } i=1+\epsilon; \\
i-1, & \text{ if } \epsilon+2\leq i\leq w-v+2+\delta; \\
w+2, &  \text{ if } i=w-v+3+\delta; \\
i-2, & \text{ if } w-v+4+\delta \leq i\leq w+2,
\end{array}\right.$$
for $(\epsilon,\delta)=(0,0)$, $(0,1)$, $(1,0)$ or $(1,1)$. By routine computation,
\begin{center}
\begin{tabular}{|c|c|c|}
  \hline
  $\epsilon$ & $\delta$ & \quad coefficient of $\rho_{v}\rho_{w}$ corresponding to $\sigma$\quad\quad \\ \hline
  0 & 0 & $-|P_1|_y |P_{w-v+3}|_y$\\ \hline
  0 & 1 & $+|P_1|_y |P_{w-v+4}|_y$\\ \hline
  1 & 0 & $+|P_2|_y |P_{w-v+3}|_y$\\ \hline
  1 & 1 & $-|P_2|_y |P_{w-v+4}|_y$\\ \hline
\end{tabular}.\end{center}
Adding the above 4 coefficients gives $$a_\mu=a_{(v,w)}=
-(|P_1|_y-|P_2|_y)(|P_{w-v+3}|_y-|P_{w-v+4}|_y)\neq0.$$
\end{proof}

\begin{defn}\label{seq_of_substrings}
To any sequence $\nu=\{\nu_1\le\nu_2\le\cdots\le\nu_n\}$ of positive
integers, we associate a sequence
$\tilde{\nu}=\{\tilde{\nu}_i \}$ of subsequences of $\nu$, each subsequence has the specified number of elements as follows.
Denote by $c$ the number of 1's in $\nu$ and $m:=n-c$.
$$\underbrace{1,1,1}_{3};\cdots;\underbrace{1,1,1}_{3};
\underbrace{1,\dots,1}_{1,2\hbox{ or }3};
\underbrace{\nu_{c+1},\nu_{c+2}}_2;
\cdots;
\underbrace{\nu_{c+2\lceil\frac{m}{2}\rceil-3},\dots,\nu_{c+2\lceil\frac{m}{2}\rceil-2}}_2,
\underbrace{\nu_{c+2\lceil\frac{m}{2}\rceil-1},\dots,\nu_{c+m}}_{1\hbox{ or }2}.
$$
To be precise,
$$
\tilde{\nu}_i=\left\{\begin{array}{ll}
        (1,1,1),& 1\le i\le \lceil\frac{c}{3}\rceil-1; \\
        (\underbrace{1,\dots,1}_{c+3-3\lceil\frac{c}{3}\rceil}), &i=\lceil\frac{c}{3}\rceil;\\
        (\nu_{c+2(i-\lceil\frac{c}{3}\rceil)-1},\nu_{c+2(i-\lceil\frac{c}{3}\rceil)}),& \lceil\frac{c}{3}\rceil+1\le i\le \lceil\frac{c}{3}\rceil+\lceil\frac{m}{2}\rceil-1; \\
        (\nu_{c+2\lceil\frac{m}{2}\rceil-1},\dots,\nu_{c+m}),&i=\lceil\frac{c}{3}\rceil+\lceil\frac{m}{2}\rceil.\\
              \end{array}
 \right.
$$
\end{defn}

\begin{exmp}
If $\nu=(9)$ then $\tilde{\nu}=((9))$.

If $\nu=(1,1,1,1)$ then $\tilde{\nu}=((1,1,1),(1))$.

If $\nu=(1,1,1,1,10)$ then $\tilde{\nu}=((1,1,1),(1),(10))$.

If $\nu=(1,1,1,1,1,1,1,1,3,3,5,5)$ then
$\tilde{\nu}=((1,1,1),(1,1,1),(1,1),(3,3),(5,5))$.

If $\nu=(1,1,1,2,2,2,3,3,7,7)$ then
$\tilde{\nu}=((1,1,1),(2,2),(2,3),(3,7),(7))$.
\end{exmp}

\begin{proof}[Proof of Theorem \ref{main:k<=n-4}] The following
table is the building block of our proof. In the table below,
$|\mu|$ denotes the sum of all numbers in $\mu$.
\begin{equation}\label{table}
\begin{tabular}{|c|l|c|c|}
\hline
$\mu$ & $E_\mu\in\D'$ &$|\mu|$ &$\#E_\mu$\\
\hline (1,1,1) & $\{P_1,P_2,P_3\}, \; |P_1|=|P_2|=|P_3|=0$ & 3&
3\\
\hline (1,1) & $\{P_1,P_2,P_3,P_4\}, \; |P_1|=|P_2|=0,|P_3|=|P_4|=2$
&2&4\\
\hline (1) &
$\{P_1,P_2\}, \; |P_1|=|P_2|=0$ &1&2\\
\hline $\begin{array}{c}(v,w)\\ 2\le v\le w\end{array}$ &
$\begin{array}{l}
\{P_1,\dots,P_{w+2}\}\in\D'_{w+2}, \hbox{ such that }\\
|P_i|=\left\{\begin{array}{ll}
0, & \text{ if\; }1\leq i\leq 2;\\
i-2, & \text{ if\; }3\leq i\leq w-v+3;\\
i-3, & \text{ if\; }w-v+4\leq i\leq w+2.
\end{array} \right.
\end{array}
$
&$v+w$&$w+2$
\\
\hline $\begin{array}{c}(w)\\ w\geq 2\end{array}$
& $\begin{array}{l}\{P_1,\dots,P_{w+1}\}, \hbox{ such that }\\
|P_1|=|P_2|=0, |P_i|=i-2 \,\,(3\leq i\leq w+1)\end{array}$
&$w$&$w+1$\\
\hline
\end{tabular}
\end{equation}

We claim that, in the above table, the leading monomial
$\LM(\varphi(E_\mu))=\rho_\mu$. Indeed, the case $\mu=(1,1,1)$ or
$(1)$ follows from Lemma \ref{varphi_computation_lemma} (v); the
case $\mu=(1,1)$ follows from Lemma \ref{varphi_computation_lemma}
(iv)(v); the case $\mu=(v,w)$ follows from Lemma \ref{partitions
with two parts}; the case $\mu=(w)$ follows from Lemma
\ref{partitions with one part}.

Let $\tilde{\nu}=\{\tilde{\nu}_1,\dots,\tilde{\nu}_m\}$ be defined
as in Definition \ref{seq_of_substrings}. The idea of the
construction of $D_\nu$ is to take the union of translations of
$E_{\tilde{\nu}_1}, \dots, E_{\tilde{\nu}_m}$ together with some
points in $\N\times\N$ that do not affect the value of $\varphi$.

 We consider 2 cases separately.

CASE 1: $\tilde{\nu}_m\neq (1,1,1)$.

Define translating vectors $T_1,\dots,T_m\in\N\times \N$ as follows.
$T_m=(1,0)$,
$$T_i=(1+\#E_{\tilde{\nu}_{i+1}}+\#E_{\tilde{\nu}_{i+2}}+\cdots+\#E_{\tilde{\nu}_{m}},0),\quad \forall i\in [1,m-1].$$
Define
$$n_0=1+\#E_{\tilde{\nu}_{1}}+\#E_{\tilde{\nu}_{2}}+\cdots+\#E_{\tilde{\nu}_{m}}.$$
Then $n_0\le (1+|\tilde{\nu}_{1}|+|\tilde{\nu}_{2}|+\cdots+|\tilde{\nu}_{m}|)+3=k+4\le n$. 
Choose $P_j\in\N\times\N$ such that $|P_j|=j-1$ for $j\in
[n_0+1,n]$. Define $D\in\D'$ as follows,
\begin{equation}\label{eq:D}
D=\{(0,0)\}\;\cup \; \bigcup_{i=1}^m(E_{\tilde{\nu}_i}+T_i)\;\cup
\bigcup_{j=n_0+1}^n \{P_j\},
\end{equation}
where $E_{\tilde{\nu}_i}+T_i$ denotes the set of points in $\N\times
\N$ obtained by adding each point in $E_{\tilde{\nu}_i}$ by the
translating vector $T_i$.

Now we prove the following claim.

\noindent \emph{Claim.} Fix $\nu\in\Pi_{d_2,k}$. For any integer
$d'_2$ satisfying $\#\nu\le d'_2\le {n\choose 2}-k-(\#\nu)$, define
$d'_1={n\choose 2}-k-d'_2$. Then we can make choices of
$E_{\tilde{\nu}_i}$ and $P_j$ in (\ref{eq:D}), such that the
bi-degree of $D$ is $(d'_1,d'_2)$, and the $x$-coordinates of the
points in $D$ are non-negative, i.e. $D\in\D$.

\begin{proof}[Proof of Claim]
We give the exact lower bound and upper bound for the $y$-degree of
$D$, and shows that any integers between the lower bound and upper
bound can be the $y$-degree of some $D$.

For the exact lower bound, we want to construct $P_j$ and
$E_{\tilde{\nu}_i}$ such that their $y$-degrees are as small as
possible. We let $P_j=(j-1,0)$ and $E_{\tilde{\nu}_i}$ be as
follows:
$$\begin{tabular}{|c|l|c|}
\hline ${\tilde{\nu}_i}$ & $E_{\tilde{\nu}_i}\in\D'$ & $y$-degree of $E_{\tilde{\nu}_i}$\\
\hline (1,1,1) & $\{(-2,2),(-1,1),(0,0)\}$&3\\
\hline (1,1) & $\{(-1,1),(0,0),(1,1),(0,2)\}$&2\\
\hline (1) & $\{(-1,1),(0,0)\}$&1\\
\hline $\begin{array}{c}(v,w)\\ 2\le v\le w\end{array}$ &
$\{(0,0),(1,0),\dots,(w-1,0)\}\cup\{(-1,1),(w-v,1)\}$
&2\\
\hline $\begin{array}{c}(w)\\ w\geq 2\end{array}$
&$\{(-1,1),(0,0),(1,0),\dots,(w-1,0)\}$&1\\
\hline
\end{tabular}$$
and denote the resulting $D$ by $D_{\min y}$. Observe that the
$y$-degree of $E_{\tilde{\nu}_i}$ is equal to $\#{\tilde{\nu}_i}$
for all ${\tilde{\nu}_i}$ in the table, so the $y$-degree of
$D_{\min y}$ is $\sum_{i=1}^m(\#\tilde{\nu}_i)=(\#\nu)$.

For the exact upper bound, we need only to note that if $D\in\D_n$
can be constructed as (\ref{eq:D}), then the transpose of $D$ (i.e.
swap the $x$ and $y$ coordinates of each point in $D$) can also be
constructed as (\ref{eq:D}) for some choices of $P_j$ and
$E_{\tilde{\nu}_i}$. In particular, the transpose of $D_{\min y}$,
denoted by $D_{\max y}$, can be constructed as (\ref{eq:D}). The
$y$-degree of $D_{\max y}$ is ${n\choose 2}-k-(\#\nu)$, and is the
maximal $y$-degree for all possible $D\in\D_n$ constructed as
(\ref{eq:D}).

Finally, by moving an appropriate  point of $D$ to the north-west
direction, the $y$-degree increases by $1$, so every integer between
$\#\nu$ and ${n\choose 2}-k-(\#\nu)$ is the $y$-degree of some $D$.
This completes the proof of Claim.
\end{proof}

Now by assumption $d_2\le d_1$, $d_1+d_2={n\choose 2}-k$, and
$(\#\nu)\le d_2$ since $\nu$ is a partition of $k$ into no more than
$d_2$ parts. Therefore $(\#\nu)\le d_2\le{n\choose 2}-k-(\#\nu)$ and
 by the above claim $d_2$ is the $y$-degree of some $D\in\D$
constructed as (\ref{eq:D}). Take this $D$ and denote it by $D_\nu$.
The bi-degree of $D_\nu$ is $(d_1,d_2)$. Applying Lemma
\ref{varphi_computation_lemma} (ii)(iii)(iv),
$$\varphi(D_\nu)=\prod_{i=1}^m\varphi(E_{\tilde{\nu}_i}),$$
hence by Lemma \ref{product of lowest terms} (c),
$$\LM(\varphi(D_\nu))=\prod_{i=1}^m\LM(\varphi(E_{\tilde{\nu}_i}))=\prod_{i=1}^m\rho_{\tilde{\nu}_i}=\rho_\nu.$$

CASE 2: $\tilde{\nu}_m= (1,1,1)$.

In this case, $(\#\nu)=k=3m$. Choose $D\in\D$ to satisfy:
$|P_j|=j-1$ for $1\le j\le n-3m$,
$|P_{n-3m+3j-2}|=|P_{n-3m+3j-1}|=|P_{n-3m+3j}|=n=3m+3j-3$. By
assumption, $\nu\in\Pi_{d_2, k}$, so $d_2\ge k$ in this case. It is
straightforward to verify that we can choose such a $D$ to have
bi-degree $(d_1,d_2)$. This completes the proof of Theorem
\ref{main:k<=n-4}.
\end{proof}

\begin{proof}[Proof of Theorem \ref{main:k<=n-3}]
The proof is almost identical with the one of Theorem
\ref{main:k<=n-4}. We only need to modify the row $\mu=(1,1)$ in the
table (\ref{table}). Instead of using $E_{(1,1)}\in\D'$ (which
contains 4 points), we use two elements $E_{(1,1)}'$ and
$E_{(1,1)}''$ in $D'$, each of which contains 3 points.
$$E_{(1,1)}'=\{(-a-1,a+1),(-a,a),(a+1,a)\},$$
$$E_{(1,1)}''=\{(-a-1,a+1),(-a,a),(a,a+1)\}.$$
A simple computation shows
$$\varphi(E_{(1,1)}')=\rho_2,\quad \varphi(E_{(1,1)}'')=-\rho_1^2+\rho_2,$$
so
$$\varphi(E_{(1,1)}')-\varphi(E_{(1,1)}'')=\rho_1^2.$$
Here we need to be cautious that the bi-degree of $E_{(1,1)}'$ and
$E_{(1,1)}''$ are not the same. This will not bring any problem,
since we can move points in other $E_\mu$ to adjust the total
bi-degree. Eventually, supposing thata $\ell$ is the integer that
$\tilde{\nu}_\ell=(1,1)$, we can construct $D'_\nu,D''_\nu\in\D$
both of bi-degree $(d_1,d_2)$ such that
$$\varphi(D'_\nu)=\varphi(E'_{(1,1)})\prod_{i\neq \ell}\varphi(E_{\tilde{\nu}_i}),\quad \varphi(D''_\nu)=\varphi(E''_{(1,1)})\prod_{i\neq \ell}\varphi(E_{\tilde{\nu}_i}).$$
Then $f:=\Delta(D'_\nu)-\Delta(D''_\nu)$ satisfies
$\LM(\varphi(f))=\rho_\nu$.

Now for each $\nu\in\Pi_{d_2,k}$, we can construct $f_\nu$ such that
$\LM(\varphi(f))=\rho_\nu$. If we write down the coefficient matrix
for $\varphi(f_\nu)$ with basis $\{\rho_\mu\}_{\mu\in\Pi_k}$
arranged in decreasing order, we obtain a row echelon form with rank
$p(d_2,k)$. So $\dim  M_{d_1, d_2}\ge p(d_2, k)$ by Lemma
\ref{lem:bar phi}. Combining the upper bound obtained in Proposition
\ref{prop:upper bound}, we conclude that $\dim  M_{d_1, d_2}= p(d_2,
k)$.
\end{proof}

\section{The condition for the equality $\dim M_{d_1, d_2}=
p(d_2, k)$ to hold}

In Proposition \ref{prop:upper bound} we showed the inequality $\dim
M_{d_1, d_2}\le p(d_2, k)$, then in Theorem \ref{main:k<=n-3} we
showed that ``='' holds for $k\le n-3$. In this section, we show that the
condition $k\le n-3$ is the best we can hope, in the sense of the
following theorem.

\begin{thm}\label{thm:equality holds}
Assume $d_2\leq d_1$. Then
$\dim M_{d_1, d_2}\le p(d_2, k)$, and the equality holds if and only if ``$k\leq n-3$'', or
``$k=n-2$ and $d_2=1$'', or ``$d_2=0$''.
\end{thm}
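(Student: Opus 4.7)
The upper bound $\dim M_{d_1,d_2}\le p(d_2,k)$ is Proposition~\ref{prop:upper bound}, and the equality in the regime $k\le n-3$ is Theorem~\ref{main:k<=n-3}. What remains is to handle the two boundary equality cases and to establish strict inequality in every other case. For $d_2=0$, the $y$-degree-zero alternating polynomials form a $1$-dimensional space spanned by the Vandermonde $\prod_{i<j}(x_i-x_j)$ of bi-degree $(\binom{n}{2},0)$, so $\dim M_{d_1,0}=1$ if $d_1=\binom{n}{2}$ (i.e.\ $k=0$) and $0$ otherwise, which matches $p(0,k)$. For $k=n-2$, $d_2=1$, my plan is to apply Corollary~\ref{cor:Dcatalan} and enumerate $\D_n^{catalan}$ of bi-degree $(\binom{n-1}{2},1)$ directly: such a $D$ has a single above-axis point $(p,1)$ together with an initial $x$-axis segment, which condition~(a) forces to be $\{(0,0),(1,0),\dots,(n-2,0)\}$; condition~(b) then excludes $p>n-2$, and the $x$-degree constraint pins $p=0$, giving the unique $D=\{(0,0),(0,1),(1,0),\dots,(n-2,0)\}$ and hence $\dim M_{d_1,1}=1=p(1,n-2)$.

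For the strict inequalities, which cover $k=n-2$ with $d_2\ge 2$ and $k\ge n-1$ with $d_2\ge 1$, my plan is to iterate Proposition~\ref{prop:upper bound} to produce the chain
$$\dim M^{(n)}_{d_1,d_2}\le \dim M^{(n+1)}_{d_1+n,d_2}\le \cdots \le \dim M^{(n+\ell)}_{d_1+n\ell+\binom{\ell}{2},d_2}=p(d_2,k),$$
where the final equality is Theorem~\ref{main:k<=n-3} once $\ell\ge\max(1,k-n+3)$. The embedding $D\mapsto\{(0,0)\}\cup(D+(1,0))$ underlying Proposition~\ref{prop:upper bound} has image exactly those $D'\in\D_{n+1}^{catalan}$ with no point $(0,j)$ for $j\ge 1$. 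Iterating, $D^{(n+\ell)}\in\D_{n+\ell}^{catalan}$ is in the image of the $\ell$-fold embedding iff $D^{(n+\ell)}$ contains no point $(i,j)$ with $0\le i<\ell$ and $j\ge 1$. Thus strict inequality $\dim M^{(n)}_{d_1,d_2}<p(d_2,k)$ is equivalent to exhibiting, for some $\ell$ in the stated range, an element of $\D_{n+\ell}^{catalan}$ of the prescribed bi-degree containing such a point.

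To exhibit such a witness I plan to use the bijection $\theta\colon\Lambda_{n+\ell}\to\D_{n+\ell}^{catalan}$ of Proposition~\ref{prop:theta}. Setting $\lambda_1=n+\ell-1$ (its maximal allowed value) makes $a_1=0$, and the condition $b_1\ge 1$ is automatic because $j=n+\ell$ always contributes (the relation $\lambda_{n+\ell}=0$ yields $\lambda_1-\lambda_{n+\ell}+1-(n+\ell)=0\in\{0,1\}$). Therefore $(0,b_1)\in D(\lambda)$ with $b_1\ge 1$, placing $D(\lambda)$ outside the image of the embedding. The technical heart of the argument---the step I expect to be the main obstacle---is choosing the remaining entries $\lambda_2,\dots,\lambda_{n+\ell-1}$ so that simultaneously $|\lambda|=d_2+k$ and $\sum b_i=d_2$ hold; the former is a simple linear constraint, while the latter is a genuinely combinatorial condition. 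In the smallest subcase $k=n-2$, $d_2=2$, $\ell=1$, the choice $\lambda=(n,0,\dots,0)\in\Lambda_{n+1}$ suffices and yields $D=\{(0,0),(0,2),(1,0),\dots,(n-1,0)\}$. For larger $d_2$ and $k$, I plan to add controlled small entries (each equal to $0$ or $1$) in carefully chosen positions of $\lambda$, using the same block-by-block perspective as in the proof of Theorem~\ref{main:k<=n-4}, and to verify $\sum b_i=d_2$ by a direct count of the pairs $(i,j)$ that satisfy the dinv-type condition in Conjecture~\ref{conj}.
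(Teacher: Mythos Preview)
Your framework matches the paper's: Proposition~\ref{prop:upper bound} and Theorem~\ref{main:k<=n-3} handle the inequality and the $k\le n-3$ equality, the two boundary equality cases are checked directly, and strict inequality is obtained by exhibiting an element of $\D_{n+\ell}^{catalan}$ outside the image of the embedding $D\mapsto\{(0,0),\dots,(\ell-1,0)\}\cup(D+(\ell,0))$. Your handling of the equality cases is correct; your enumeration for $k=n-2$, $d_2=1$ is in fact more detailed than the paper's one-line remark, and your characterization of the image of the iterated embedding is right.

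The gap is in the witness construction for strict inequality. You set $\lambda_1=n+\ell-1$ so that $(0,b_1)\in D(\lambda)$ with $b_1\ge 1$, and then propose to adjust the remaining $\lambda_i$ to hit $|\lambda|=d_2+k$ and $\sum b_i=d_2$. You correctly flag the second constraint as ``the main obstacle'' but leave it as a plan; achieving a prescribed dinv while holding $\lambda_1$ maximal and controlling $|\lambda|$ is genuine work, and nothing in the proposal carries it out beyond the single case $d_2=2$.

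The paper sidesteps this entirely. Since one strict step anywhere in the chain suffices, the paper places it at $n'=k+2$ (where $k=n'-2$) and works directly in $\D^{catalan}$ rather than on the $\Lambda$ side. Because $(d_1'-n'+3)+(d_2-2)=\binom{n'-2}{2}$, there is a (unique) $D^{(n'-2)}\in\D_{n'-2}^{catalan}$ of bi-degree $(d_1'-n'+3,\,d_2-2)$, and then
\[
D^{(n'+1)}=\{(0,0),(1,0),(0,2)\}\cup\bigl(D^{(n'-2)}+(2,0)\bigr)
\]
is easily checked to lie in $\D_{n'+1}^{catalan}$, has bi-degree $(d_1'+n',\,d_2)$, and contains $(0,2)$, hence is not in the image of the one-step embedding. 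This reduces the problem to the known $k=0$ case and requires only $d_2\ge 2$, eliminating the dinv bookkeeping you anticipated. Incidentally, the paper's proof, like yours, does not separately write out the case $d_2=1$, $k\ge n-1$; your direct enumeration of $\D_n^{catalan}$ with $y$-degree $1$ already gives $\dim M_{d_1,1}=0$ there, so you actually have that case covered.
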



\begin{proof}
The inequality is proved in Proposition \ref{prop:upper bound}. Then we verify the equality $\dim M_{d_1, d_2}= p(d_2, k)$ in the
specified 3 cases. The case $d_2=0$ is trivial since by definition
$p(0,k)=0$ for $k\ge 1$ and $p(0,0)=1$, we can check the equality
directly. In the case $k=n-2$ and $d_2=1$, $\dim M_{d_1,d_2}=1$
because $\Delta(\{(0,0),(0,1),(1,0),(2,0),\dots,(n-2,0)\})$ forms a
basis for $M_{d_1,d_2}$. The case $k\le n-3$ is proved in Theorem
\ref{main:k<=n-3}.

Now assume $d_2\ge 2$. We use the notation $M^{(n)}_{d_1,d_2}$ to specify which $n$ we are considering.
By Proposition \ref{prop:upper bound}, it suffices to show that $\dim M^{(n)}_{d_1,d_2}<\dim M^{(n+1)}_{d_1+n,d_2}$ for $k=n-2$.

Using the condition $d_1\ge d_2$, it is easy to check that $(d_1-n+3)\ge 0$. So both $(d_1-n+3)$ and $(d_2-2)$ are non-negative integers and $(d_1-n+3)+(d_2-2)={n\choose 2}-k-n+1={n-2\choose 2}$. We know that
$\dim M^{(n-2)}_{d_1-n+3,d_2-2}=1$. Let $D^{(n-2)}\in\D^{Catalan}_{n-2}$ be of bi-degree $(d_1-n+3,d_2-2)$. Define
$$D^{(n+1)}=\{(0,0),(1,0),(0,2)\}\cup \big{(}D^{(n-2)}+(2,0)\big{)}.$$
Then $D^{(n+1)}\in \D^{Catalan}_{n+1}$ is of bi-degree $(d_1+n,d_2)$. On the other hand, every $D^{(n)}\in \D^{Catalan}_n$ of bi-degree $(d_1,d_2)$ determines an element
$$\{(0,0)\}\cup\big{(}D^{(n)}+(1,0)\big{)}$$
in $\D^{Catalan}_{n+1}$ of bi-degree $(d_1+n,d_2)$, which is distinct from $D^{(n+1)}$. Therefore
$$\dim M^{(n)}_{d_1,d_2}<\dim M^{(n+1)}_{d_1+n,d_2}.$$
\end{proof}
\begin{remark} For $k={n\choose 2}-d_1-d_2=0$, we know $\dim M^{(n)}_{d_1,d_2}=p(d_2,k)=1$. We give a straightforward construction of the element $D\in\D^{Catalan}_n$ of bi-degree $(d_1,d_2)$ as follows.
Let $u=\lfloor(2n+1-\sqrt{4n^2-4n-8d_1+9}\,)/2\rfloor$, $i=nu-u(u+1)/2-d_1$,
define $$(x_1,\dots,x_n)=\underbrace{(u-1,u-1,\dots,u-1}_i, \underbrace{u,u,\dots,u}_{n-u-i}, u-1,u-2,\dots,1,0), $$
$$y_i=\#\{j\,|\, i<j, x_j-x_i\in\{0,1\}\}, \quad i=1,\dots,n.$$
Then $D=\{(x_1,y_1),(x_2,y_2),\dots,(x_n,y_n)\}$. \qed
\end{remark}

\newpage
\section{Appendix}
\subsection{Table of the $q,t$ Catalan number for $n=7$}

The number located at the $(i,j)$-th coordinate is equal to the coefficient of $q^i t^j$ in $C_7(q,t)$. The left at the bottom is the $(0,0)$-th position.
$$\aligned
&1    \\
&0 \,\,\,\, 1  \\
&0 \,\,\,\, 1 \,\,\,\, 1 \\
&0 \,\,\,\, 1 \,\,\,\, 1 \,\,\,\, 1  \\
&0 \,\,\,\, 1 \,\,\,\, 2 \,\,\,\, 1 \,\,\,\, 1  \\
&0 \,\,\,\, 1 \,\,\,\, 2 \,\,\,\, 2 \,\,\,\, 1 \,\,\,\, 1 \\
&0 \,\,\,\, 1 \,\,\,\, 3 \,\,\,\, 3 \,\,\,\, 2 \,\,\,\, 1 \,\,\,\, 1 \\
&0 \,\,\,\, \,\,\, \,\,\,\, 2 \,\,\,\, 4 \,\,\,\, 3 \,\,\,\, 2 \,\,\,\, 1  \,\,\,\, 1 \\
&0 \,\,\,\, \,\,\, \,\,\,\, 2 \,\,\,\, 4 \,\,\,\, 5 \,\,\,\, 3 \,\,\,\, 2  \,\,\,\, 1  \,\,\,\, 1 \\
&0 \,\,\,\, \,\,\, \,\,\,\, 1 \,\,\,\, 4 \,\,\,\, 5 \,\,\,\, 5 \,\,\,\, 3  \,\,\,\, 2  \,\,\,\, 1 \,\,\,\, 1 \\
&0 \,\,\,\, \,\,\, \,\,\,\, 1 \,\,\,\, 3 \,\,\,\, 6 \,\,\,\, 6 \,\,\,\, 5  \,\,\,\, 3  \,\,\,\, 2 \,\,\,\, 1  \,\,\,\, 1  \\
&0 \,\,\,\, \,\,\, \,\,\,\, \,\,\, \,\,\,\, 2 \,\,\,\, 5 \,\,\,\, 7 \,\,\,\, 6 \,\,\,\, 5  \,\,\,\, 3  \,\,\,\, 2 \,\,\,\, 1  \,\,\,\, 1  \\
&0 \,\,\,\, \,\,\, \,\,\,\, \,\,\, \,\,\,\, 1 \,\,\,\, 4 \,\,\,\, 6 \,\,\,\, 8 \,\,\,\, 6 \,\,\,\, 5  \,\,\,\, 3  \,\,\,\, 2 \,\,\,\, 1  \,\,\,\, 1  \\
&0 \,\,\,\, \,\,\, \,\,\,\, \,\,\, \,\,\,\, \,\,\, \,\,\,\, 2 \,\,\,\, 5 \,\,\,\, 7 \,\,\,\, 8 \,\,\,\, 6 \,\,\,\, 5  \,\,\,\, 3  \,\,\,\, 2 \,\,\,\, 1  \,\,\,\, 1  \\
&0 \,\,\,\, \,\,\, \,\,\,\, \,\,\, \,\,\,\, \,\,\, \,\,\,\, 1 \,\,\,\, 3 \,\,\,\, 6 \,\,\,\, 8 \,\,\,\, 8 \,\,\,\, 6 \,\,\,\, 5  \,\,\,\, 3  \,\,\,\, 2 \,\,\,\, 1  \,\,\,\, 1  \\
&0 \,\,\,\, \,\,\, \,\,\,\, \,\,\, \,\,\,\, \,\,\, \,\,\,\, \,\,\, \,\,\,\, 1 \,\,\,\, 3 \,\,\,\, 6 \,\,\,\, 7 \,\,\,\, 8 \,\,\,\, 6 \,\,\,\, 5  \,\,\,\, 3  \,\,\,\, 2 \,\,\,\, 1  \,\,\,\, 1  \\
&0 \,\,\,\, \,\,\, \,\,\,\, \,\,\, \,\,\,\, \,\,\, \,\,\,\, \,\,\, \,\,\,\, \,\,\, \,\,\,\, 1 \,\,\,\, 3 \,\,\,\, 5 \,\,\,\, 6 \,\,\,\, 7 \,\,\,\, 6 \,\,\,\, 5  \,\,\,\, 3  \,\,\,\, 2 \,\,\,\, 1  \,\,\,\, 1  \\
&0 \,\,\,\, \,\,\, \,\,\,\, \,\,\, \,\,\,\, \,\,\, \,\,\,\, \,\,\, \,\,\,\, \,\,\, \,\,\,\, \,\,\, \,\,\,\, 1 \,\,\,\, 2 \,\,\,\, 4 \,\,\,\, 5 \,\,\,\, 6 \,\,\,\, 5 \,\,\,\, 5  \,\,\,\, 3  \,\,\,\, 2 \,\,\,\, 1  \,\,\,\, 1  \\
&0 \,\,\,\, \,\,\, \,\,\,\, \,\,\, \,\,\,\, \,\,\, \,\,\,\, \,\,\, \,\,\,\, \,\,\, \,\,\,\, \,\,\, \,\,\,\, \,\,\, \,\,\,\, \,\,\, \,\,\,\, 1 \,\,\,\, 2 \,\,\,\, 3 \,\,\,\, 4 \,\,\,\, 4 \,\,\,\, 4  \,\,\,\, 3  \,\,\,\, 2 \,\,\,\, 1  \,\,\,\, 1  \\
&0 \,\,\,\, \,\,\, \,\,\,\, \,\,\, \,\,\,\, \,\,\, \,\,\,\, \,\,\, \,\,\,\, \,\,\, \,\,\,\, \,\,\, \,\,\,\, \,\,\, \,\,\,\, \,\,\, \,\,\,\, \,\,\, \,\,\,\, \,\,\, \,\,\,\, 1 \,\,\,\, 1 \,\,\,\, 2 \,\,\,\, 2 \,\,\,\, 3 \,\,\,\, 2  \,\,\,\, 2 \,\,\,\, 1  \,\,\,\, 1  \\
&0 \,\,\,\, \,\,\, \,\,\,\, \,\,\, \,\,\,\, \,\,\, \,\,\,\, \,\,\, \,\,\,\, \,\,\, \,\,\,\, \,\,\, \,\,\,\, \,\,\, \,\,\,\, \,\,\, \,\,\,\, \,\,\, \,\,\,\, \,\,\, \,\,\,\, \,\,\, \,\,\,\, \,\,\, \,\,\,\, \,\,\, \,\,\,\, \,\,\, \,\,\,\, 1 \,\,\,\, 1 \,\,\,\, 1 \,\,\,\, 1 \,\,\,\, 1 \,\,\,\, 1 \\
&0 \,\,\,\,\, 0 \,\,\,\, 0 \,\,\,\, 0 \,\,\,\, 0 \,\,\,\, 0 \,\,\,\,
0 \,\,\,\, 0 \,\,\,\, 0 \,\,\,\, 0 \,\,\,\, 0 \,\,\,\, 0 \,\,\,\, 0
\,\,\,\, 0 \,\,\,\, 0 \,\,\,\, 0 \,\,\,\, 0 \,\,\,\, 0 \,\,\,\, 0
\,\,\,\, 0 \,\,\,\, 0 \,\,\,\, 1
\endaligned$$

Tables for $n\leq 7$ can be found at F. Bergeron's website

\begin{verbatim}
http://bergeron.math.uqam.ca/n_fact_Conjecture/qt_catalan.pdf
\end{verbatim}

\subsection{Macaulay 2 code for computing $\varphi$} For the convenience of the reader, we provide a Macaulay 2 code \cite{M2} for computing $\varphi$ (function ``phi'' in
the code) defined in \S2.1, together with an example of computation
$$\varphi(\{(-1,1),(0,0),(0,1),(0,2),(1,1)\}).$$
\begin{verbatim}
i1 : phi=(D)->(
       local R,n,k,sgn,s,total,t,bi,sumrho,prod;
       n=#D;
       R=ZZ[r_0..r_n];
       k=n*(n-1)//2; for i from 1 to n do
          k=k-D#(i-1)#0-D#(i-1)#1;
       total=0;
       scan(permutations(n),sigma->(
         sgn=1;
         for i from 0 to n do
           for j from i+1 to #sigma-1 do
            (if sigma#i>sigma#j then sgn=sgn*(-1));
         t=1;
         for i from 1 to n do
          (
            ai=D#(i-1)#0;bi=D#(i-1)#1;
            s=(sigma#(i-1)+1)-1-ai-bi;
            if (s<0) then (t=0;break) else
            if (bi==0) and (s>0) then (t=0;break) else
            if (bi==0) and (s==0) then (t=t*1) else
            if (bi==1) then (t=t*r_s) else
             (sumrho=0;
              scan(subsets(s+bi-1,bi-1),su->(
                    prod=r_(su#0);
                    for j from 2 to bi-1 do
                       prod=prod*r_(su#(j-1)-su#(j-2)-1);
                    prod=prod*r_(s+bi-2-su#(bi-2));
                    sumrho=sumrho+prod;     )
                  );
              t=t*sumrho;
             );
          );
          total=total+sgn*t;
         )); --end of scan of sigma.
       return sub((-1)^k*total,{r_0=>1});
     )

o1 = phi

o1 : FunctionClosure

i2 : phi({(-1,1),(0,0),(0,1),(0,2),(1,1)})

        5     3        2     2
o2 = - r  + 2r r  + r r  - 2r r  - r r  + r r
        1     1 2    1 2     1 3    2 3    1 4

o2 : R

\end{verbatim}

\end{document}